\renewcommand{\L}[1]{\mathbf{L^#1}}
\newcommand{\Lloc}[1]{\mathbf{L^{#1}_{loc}}}
\newcommand{\Cc}[1]{\mathbf{C^{#1}_{c}}}
\newcommand{\C}[1]{\mathbf{C^{#1}}}
\newcommand{\Wloc}[2]{\mathbf{W^{#1,#2}_{loc}}}
\newcommand{\bv}{\mathbf{BV}}
\newcommand{\bvloc}{\mathbf{BV_{loc}}}
\newcommand{\D}{\mathcal{D}}
\newcommand{\R}{\mathcal{R}}
\DeclareMathOperator{\tv}{Tot.Var.}
\DeclareMathOperator{\m}{meas}
\def\bega{\begin{array}}
\def\enda{\end{array}}
\def\begi{\begin{itemize}}
\def\endi{\end{itemize}}
\def\bel{\begin{equation}\label}
\def\eeq{\end{equation}}
\DeclareMathOperator\sign{sign}
\title{\bf Vanishing Viscosity and Backward Euler
  Approximations for Conservation
  Laws with Discontinuous Flux}
\author{Graziano Guerra\thanks{Department of Mathematics and its Applications,
  University of Milano - Bicocca (\email{graziano.guerra@unimib.it})}
\and Wen Shen\thanks{Department of Mathematics, Penn State University,
University Park, PA 16802, U.S.A.
 (\email{wxs27@psu.edu})}}
\begin{document}

\maketitle

\begin{abstract}
  Solutions to a class of one dimensional conservation laws with
  discontinuous flux
  are constructed relying on the
  Crandall-Liggett theory of nonlinear contractive
  semigroups~\cite{BP,CL}, with a vanishing viscosity approach.  
  The solutions to the corresponding viscous conservation laws are studied
  using the Backward Euler approximations. 
  We prove their convergence to a \emph{unique}
  vanishing viscosity solution to the Cauchy problem for the non
  viscous equations as the viscous parameter tends to zero.  
  This approach allows to avoid the technicalities in existing literature
  such as traces, Riemann problems, interfaces conditions, compensated
  compactness and entropy inequalities.
  Consequently we establish our result under very mild assumptions on the flux,
  with only a requirement on the smoothness  with
  respect to the unknown variable and
  a condition that allows the application of the maximum principle.
\end{abstract}

  \begin{keywords}
    Scalar Conservation Laws, discontinuous flux, vanishing viscosity,
    nonlinear semigroups, backward Euler approximation.
  \end{keywords}

  \begin{AMS}
    35L65, 35R05
  \end{AMS}

\section{Introduction }
\label{sec:1}
\setcounter{equation}{0}

We consider the Cauchy problem for the scalar conservation law 
\begin{equation}\label{eq:sc}
  u_t + f(x, u)_x =0,
\end{equation}
with initial data 
\begin{equation}\label{eq:sci}
u(0,x)=\bar u(x). 
\end{equation}
In the simpler case where $f=f(u)$ is independent of $x$, solutions have been
constructed by a variety of techniques \cite{Dafermosfirst,Dafermos,Kru}.  In
particular, in \cite{C72} it was proved that the abstract theory of
nonlinear contractive semigroups developed by Crandall and Liggett
\cite{CL} can indeed be applied to scalar conservation laws, and
yields the same solutions obtained by Kruzhkov \cite{Kru} as vanishing
viscosity limits. While these approaches are effective even for multi--dimensional
scalar conservation laws, their exploitation is harder when the flux
depends explicitly on the time and space variables $(t,x)$ in a discontinuous way.
Aim of the present paper is to develop a  semigroup approach for the 
one--dimensional case in the more general context
where the flux function $f=f(x,u)$  is allowed to depend on $x$ in a
discontinuous way, by extending the classical results of~\cite{C72,CL}.

\bigskip

We consider the following hypotheses on the flux $f$:
  \begin{itemize}
\item[\textbf{f0)}] 
  \begin{enumerate}[i)]
  \item
    $x\mapsto f(x,\omega)$ is in
    $\L{\infty}\left(\mathbb{R},\mathbb{R}\right)$
    for any $\omega\in\mathbb{R}$;
    $\omega\mapsto f(x,\omega)$ is smooth for any
    $x\in\mathbb{R}$;
  \item
    there exists a constant $L\ge 0$ such that, for any fixed
    $x\in\mathbb{R}$:
    \begin{displaymath}
        \left|f\left(x,\omega_{1}\right)-f\left(x,\omega_{2}\right)\right|\le
        L\left|\omega_{1}-\omega_{2}\right|,\quad\text{ for any }\omega_{1},\omega_{2}\in\mathbb{R};
    \end{displaymath}
  \item
    there exists a constant $L_{1}\ge 0$ such that, 
    \begin{displaymath}
        \int_{\mathbb{R}}\left|f\left(x,0\right)\right|\; dx\le
        L_{1}.
    \end{displaymath}
  \end{enumerate}
\item[\textbf{f1)}]
  The flux $f$ satisfies \textbf{f0)}, 
  and has the following form
\begin{displaymath}
  f\left(x,\omega\right)=
  \begin{cases}
    f_{l}\left(\omega\right) &\text{ if } x\le 0,\\
    f_{r}\left(\omega\right) &\text{ if } x>0    ,
  \end{cases}
\end{displaymath}
where $f_{l}$ and $f_{r}$ are smooth functions satisfying
\begin{displaymath}
  f_{l}(0)=f_{r}(0)=0,\qquad f_{l}(1)=f_{r}(1).
\end{displaymath}
\end{itemize}

\bigskip

Scalar conservation laws with discontinuous flux arise in many applications 
where the conservation laws describe physical models in rough media. 
Examples include but are not limited to
traffic flow with rough road condition and various polymer flooding
models in two phase flow in porous media. 
Beginning with the work by Isaacson \& Temple~\cite{IT92,IT95,IT86,T82}
and by Risebro and collaborators~\cite{GimseRisebro,GimseRisebro2,KR}, 
scalar conservation laws with discontinuous coefficients have become the topic
of a vast literature~\cite{AG1,AKR1,AndreianovKarslenRisebro,BV1,
  CR,D1,GNPT1,GimseRisebro,JohWin,KR2,M1,SV1}.

The existence of solutions for~\eqref{eq:sc} can be established through a compactness
argument on a family of approximate solutions.
These approximations can be constructed by
mollification of the flux~\cite{BV1,KR,Ostrov},  
by wave front tracking~\cite{GNPT1,GimseRisebro,GimseRisebro2,Gue1,KR2}, 
by Godunov's method~\cite{AG2,AmaGue1,IT95,KarTow,LonTemJin}, 
and by several other numerical schemes~\cite{BurKarTow1,KarTow2,SV1,Tow00,Tow01}.
We would also like to mention  the
recent related results on existence of solutions for Cauchy problems 
for models of polymer flooding~\cite{MR3487271}
and slow erosion in granular flow~\cite{MR3384834}. 

In a general setting,  the solutions to the conservation law~\eqref{eq:sc}
can be obtained as limits of two combined approximations:
\begin{equation}\label{eq:approx2}
u_t + f^{\delta_n} (x,u)_x = {\varepsilon_n} u_{xx}.
\end{equation}
Here $\varepsilon_n u_{xx}$ is a viscosity term, and $ f^{\delta_n} (x,u)$ 
is a mollified flux which is smooth in $x$. 
As $n\to \infty$, one takes the double limits 
$\varepsilon_n \to 0$ and 
$\delta_n \to 0$ (where  $f^{\delta_n} \to f^0=f$).
It is important to observe that in  general these two limits do not commute. 
Indeed, one can let $\varepsilon_n ,\delta_n \to 0$ keeping the ratio
$\kappa = \delta_n/\varepsilon_n$ constant. 
A detailed study of viscous traveling waves in \cite{GS2016,MR3663611} 
reveals that, for the same initial data,  infinitely many  limit solutions of (\ref{eq:approx2})
can exist, depending  on the ratio $\kappa$. 
The uniqueness of the double-limit solution is proved in \cite{MR3663611}  only under 
some additional  monotonicity conditions on the  flux function and on the 
mollification $f^{\delta_n}$.

In this paper we set $\delta_n \equiv 0$, and consider the viscous approximation to~\eqref{eq:sc}:
\begin{equation}\label{eq:vsc}
u_t + f(x, u)_x = \varepsilon u_{xx},
\end{equation}
for small $\varepsilon>0$. 
A Backward Euler scheme is adopted to generate approximate solutions
to the viscous equation~\eqref{eq:vsc}.
Using the results in~\cite{BP,CL} and relying on a detailed study of
the Backward Euler approximations, 
we establish existence and uniqueness of the vanishing viscosity limit,
as $\varepsilon \to 0$. 

We remark that the backward Euler approximation 
was recently implemented in~\cite{BressanShen},
to construct a semigroup of solutions to  a conservation law with nonlocal flux,
modeling slow erosion phenomena in granular flow.

In the  literature, uniqueness of solutions to
\eqref{eq:sc} is usually obtained through specific entropy conditions, possibly
supplemented with interface conditions at the point 
where the flux is discontinuous,
satisfied by the limit of the approximate solutions, 
see~\cite{MisVee,AndreianovKarslenRisebro,BurKarTow1,BurKarKenTow,GNPT1,
  KarTow2,KarTow,KR,Kru}.
It must be noted that, when the flux is discontinuous in the variable $x$, 
different entropy conditions or interface conditions may lead to different
solutions to~\eqref{eq:sc}.
This is also indicated by the non-uniqueness of the double limits
for~\eqref{eq:approx2}, studied in~\cite{MR3663611}. 
A systematic study of the various entropy conditions that can be imposed
on the solutions to~\eqref{eq:sc}, leading to different
semigroups of solutions, can be found in~\cite{AndreianovKarslenRisebro}.

In addition to the vanishing viscosity approach, an additional approach 
to obtain uniqueness is available in the literature, 
utilizing the so called \emph{adapted entropies}.
The basic concept was first introduced in~\cite{AP1}, and
then further extended and applied in~\cite{BaiJen,Gwiazda1,CheEveKli,Panov}.
Under further restrictions on the flux function, the adapted entropy inequality can
be applied to multi-dimensional problems~\cite{Gwiazda1,Gwiazda2}. 
However, with the exception of  some very particular fluxes, 
the solutions selected by the adapted entropies in~\cite{AP1}
are NOT
the vanishing viscosity solutions obtained
by letting  $\varepsilon\to 0$ in~\eqref{eq:vsc}. 
Some preliminary analysis shows 
that the adapted entropy concept corresponds to 
taking $\varepsilon_n \to 0$ first, then taking $\delta_n \to 0$ in~\eqref{eq:approx2}. 
A detailed discussion can be found in Section~\ref{sec:Counter},
where counter examples and analysis for selected examples are provided,
and more observations are made.

The novelty of our approach lies mainly on the techniques applied to the
problem, i.e.~the application of the
Brezis \& Pazy convergence result~\cite{BP} to  obtain the existence and
uniqueness of vanishing viscosity solutions to conservation
laws with discontinuous fluxes. 
Compactness and entropy arguments are only used to study  solutions to the
resolvent equations, constructing the approximate and the limit
semigroups (see Section~\ref{sec:vv}). This involves solutions to
ordinary differential equations, depending only on the variable $x$. 
With this approach and using the result in~\cite{BP}, we
prove directly the strong convergence of the
semigroups generated by~\eqref{eq:vsc} to a unique semigroup generated
by~\eqref{eq:sc}, without the need of additional entropy conditions.
In this way, we
obtain the uniqueness and strong convergence results 
without any additional hypothesis on the flux. 
We list a few comparisons with some existing literature.

\vspace{0.1cm}
\begin{itemize}
\item 
We do not require  the nondegeneracy condition, 
which is usually required for compensated compactness
arguments~\cite{KarRasTad,KarTow2}.
\item  
We do not have requirement on the shape of the graph of the
flux, which is usually needed by the arguments based on BV
bounds~\cite{MisVee,BurKarTow1,GNPT1,IT92,KarTow}.
In particular, we do not exclude the presence of an
infinite number of maxima/minima or flux crossings, 
which was required by~\cite{KarTow}. 
\item
We study the convergence, as $\varepsilon\to 0$, of
solutions to~\eqref{eq:vsc} directly, 
without mollifying the flux as it is done, for
instance, in~\cite{CheEveKli,KarRasTad,KarRisTow02}. 
Therefore we avoid the
problem of choosing the relative ratio of convergence between
the mollification parameter and the
viscosity. 
\end{itemize}

\vspace{0.2cm}
In this paper we establish the existence and uniqueness of solution for the 
conservation law where the flux is discontinuous at one location. 
Such a result can serve as a building block for equations where the 
discontinuities in the flux function form a more complex pattern. 
Indeed, the result in this paper 
is utilized as the starting point for the recent paper~\cite{BGS}, 
where the existence and uniqueness of the vanishing viscosity limit 
is extended to 
one dimensional scalar conservation laws with regulated flux.
%
%
To be precise, in~\cite{BGS} we prove the 
existence and uniqueness of the limit as $\varepsilon\to 0$
of the solution $u^{\varepsilon}$ to
\begin{displaymath}
  \begin{cases}
    u_t + f(t,x,u)_x~=~\varepsilon u_{xx}\,,\\
    u(0,x)~=~u_0 (x).
  \end{cases}
\end{displaymath}
Here the mapping $(t,x)\mapsto f$ 
is a regulated function in two dimensions (see Definition~1.1 in~\cite{BGS}), 
which can be highly discontinuous in the $(t,x)$--plane. 
Specially, this result can be applied directly to the existence and 
uniqueness of solutions to the triangular system
\[
  \begin{cases}
    u_t + f(v,u)_x~=~0\,,\\
    v_t + g(v)_x ~=~ 0,
  \end{cases}
  \qquad 
    \begin{cases}
    u(0,x)=u_0(x),\\
    v(0,x)=v_0(x),
  \end{cases}
\]
as the vanishing viscosity solution of 
\[
  \begin{cases}
    u_t + f(v,u)_x~=~\varepsilon u_{xx}\,,\\
    v_t + g(v)_x ~=~ 0,
  \end{cases}
  \qquad 
    \begin{cases}
    u(0,x)=u_0(x),\\
    v(0,x)=v_0(x),
  \end{cases}
\]
under  mild  assumptions on the flux $g$ and the initial data $v_0(x)$. 
We refer to~\cite{BGS} for details.

\bigskip

The remainder of the paper is organized as follows. 
In Section~\ref{sec:rev} we review classical results on non linear
semigroups that are used in the other sections. In
Section~\ref{sec:vis} we study the \emph{resolvent equation}
\begin{displaymath}
  u+\lambda\left[f(x,u)_{x}-\varepsilon u_{xx}\right]=w
\end{displaymath}
for the viscous problem and prove that, under the assumption
\textbf{f0)},
it has a unique solution
$u=J_{\lambda}^{\varepsilon}w$.
Furthermore, according to~\cite{CL}, the operator $J_{\lambda}^{\varepsilon}$ 
generates a non linear semigroup $S^{\varepsilon}_{t}$  of weak 
solutions for the viscous equation~\eqref{eq:vsc}. 
In Section~\ref{sec:vv}, under the hypothesis \textbf{f1)},
we show that $J_{\lambda}^{\varepsilon}w$, as $\varepsilon\to 0$,
converges to a unique limit $J_{\lambda}w$
which solves
\begin{displaymath}
  u+\lambda f(x,u)_{x}=w.
\end{displaymath}
In Section~\ref{sec:vvs} we apply the results in~\cite{CL} to show
that 
$J_{\lambda}$ generates a
non linear semigroup
 $S_{t}$ whose
trajectories are solutions to~\eqref{eq:sc}.
Then~\cite{BP} is applied to show that
$S^{\varepsilon}_{t}$ converges to $S_{t}$ uniformly for $t$ in
compact sets. 
See the diagram in Figure~\ref{fig:diagram}.
In Section 6 we discuss in some detail the adapted entropies introduced in~\cite{AP1},
to illustrate their difference from vanishing viscosity solutions. 
Finally, several examples and counterexamples related to the
generation of non linear semigroups are presented in Section~7,
together with some final remarks.


\begin{figure}[htbp]
\begin{center}
\setlength{\unitlength}{1mm}
\begin{picture}(60,37)(0,2)  
\put(5,5){\fbox{$S^\varepsilon_t$}}
\put(53,5){\fbox{$S_t$}}
\put(5,30){\fbox{$J^\varepsilon_\lambda$}}
\put(53,30){\fbox{$J_\lambda$}}
\put(12,6){\vector(1,0){40}}\put(25,7){\small Section 5}\put(25,2){ ($\varepsilon \to 0$)} 
\put(7.5,27){\vector(0,-1){17}} \put(8,18){($\lambda\to 0$)}
\put(-10,19){\small Section 3} \put(-10,15){\small with f0)}
\put(12,32){\vector(1,0){40}}\put(20,33){\small Section 4, with f1)}\put(25,28){ ($\varepsilon \to 0$)} 
\put(55,27){\vector(0,-1){17}} \put(56,18){($\lambda\to 0$)}
\put(40,19){\small Section 5} 
\end{picture}
\caption{A diagram for the structure of the paper.}
\label{fig:diagram}
\end{center}
\end{figure}
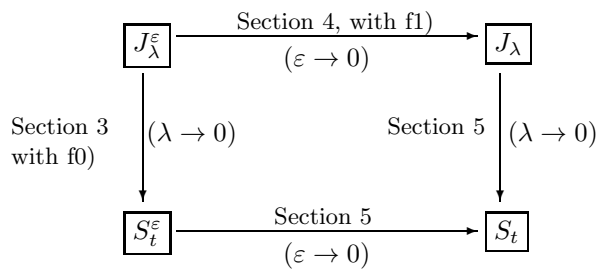

\section{Review on contractive semigroups generated by backward Euler operator}
\setcounter{equation}{0}
\label{sec:rev}
We first give a brief review on the main results in \cite{BP,CL}, 
which are important to the analysis in this paper. 
Let $X$ be a Banach space with norm $\|\cdot\|$, and let 
$A$ be a possibly nonlinear, multivalued map that we view as a subset
of $X\times X$.
The set $Au$, the domain of $A$ and its range are defined as
\begin{equation}
  \label{eq:domDef}
  \begin{split}
    Au &=~\left\{v\in X: \left(u,v\right)\in A\right\},\\
    \D (A) &=~\{ u\in X\,:~~Au\not= \emptyset~\},\\
    \R (A) &=~\bigcup_{u\in\D(A)} Au.
  \end{split}
\end{equation}
We say that the operator $A$ is \textbf{accretive} if 
\begin{equation}\label{accr}
  v_1\in A u_1,~~v_2\in A u_2, ~~\lambda>0 
  \quad\implies\quad 
  \bigl\| (u_1+\lambda v_1)-  (u_2+\lambda v_2)\bigr\|~\geq~\|u_1 - u_2\|\,.
\end{equation}

Consider the abstract Cauchy problem
\begin{equation}\label{1}
  \frac{d}{ dt} u + A u ~\ni~0\,,
  \qquad\qquad 
  u(0)= \bar u\,.
\end{equation} 
We define its \textbf{Backward Euler  operator} $J_\lambda$ by setting
\begin{equation}\label{Jl}
  \left(w,u\right)\in J_\lambda
  \quad\hbox{if and only if$\quad u\in \D (A)$ ~and    there exists
    $v\in Au$ such that $u + \lambda v = w$}.
\end{equation}
If $A$ is accretive, because of~\eqref{accr}, $J_{\lambda}$ is a single valued map.
Fix a time step $\lambda>0$, we consider the approximation
\begin{equation}\label{be2}
u(t+\lambda) ~=~J_\lambda u(t)\,.  
\end{equation}
Approximate solutions to \eqref{1} can be constructed by time iterations
with the Backward Euler operator. For time interval $[0,\tau]$
and  $n$ time steps, one computes
\begin{displaymath}
  u(\tau) ~\approx~(J_\lambda)^n  \bar u\,, \qquad \lambda=\tau/n. 
\end{displaymath}

The Backward Euler operator $J_\lambda$ has the following properties. 
\begin{lemma} \cite[Lemma~1.2]{CL}
 Let $A$ be an accretive operator on the Banach space $X$,
and assume that there exists $\lambda_0>0$ such that
\begin{equation}\label{RD} 
  \D\left(J_{\lambda}\right)=\R (I+\lambda A)~\supseteq~\overline{\D (A)}
  \qquad\qquad\forall \lambda\in \,]0,\lambda_0]\,.
\end{equation}
Then, for $\lambda, \mu\in \, ]0,\lambda_0]$ the  following holds.
\begin{itemize}
\item[(i)] 
The operator $J_\lambda$ is single-valued. Indeed, 
for $u_1,u_2\in \D (J_\lambda)$
\begin{equation}\label{6}
\left\|J_\lambda u_1 - J_\lambda u_2\right\|~\leq~\left\|u_1-u_2\right\|\,.
\end{equation}
\item[(ii)] 
For $u\in \D (A)$ one has
\begin{equation}\label{7} 
\frac{1}{\lambda}\,\|J_\lambda u - u\|~\leq~\|Au\|~\doteq~\inf\{\|v\|\,,~~v\in Au\}.
\end{equation}
\item[(iii)] 
If $n$ is a positive integer and $u\in \D (J_{\lambda})$, then 
\begin{equation}\label{8} 
\left\|(J_\lambda)^n u - u\right\|~\leq~n\, \left\|J_\lambda u - u\right\|\,.
\end{equation}
\item[(iv)] 
For any $u\in \D (J_\lambda)$, the 
``resolvent formula'' holds:
\begin{equation}\label{9} 
\tilde u\;\doteq\;
\frac{\mu}{\lambda} u + \frac{\lambda-\mu}{\lambda} J_\lambda u\in \D (J_\mu)
\qquad\mbox{and}\qquad
J_\lambda u =J_\mu\left( \tilde u\right).
\end{equation}
\end{itemize}
\end{lemma}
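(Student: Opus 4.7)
The plan is to derive all four items from the defining relation~\eqref{Jl} for $J_\lambda$ together with the accretivity inequality~\eqref{accr}; beyond these, the only ingredient used is the range hypothesis~\eqref{RD}, which guarantees that every relevant iterate remains in $\D(J_\lambda)$.

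First I would establish (i). If both $u_1$ and $u_2$ lie in $J_\lambda w$, then~\eqref{Jl} yields $v_i\in Au_i$ with $u_i+\lambda v_i=w$, and~\eqref{accr} forces $0=\|w-w\|\ge\|u_1-u_2\|$, so $J_\lambda$ is single-valued. For~\eqref{6}, I would set $z_i:=J_\lambda u_i$ and pick $v_i\in Az_i$ with $u_i=z_i+\lambda v_i$; inserting into~\eqref{accr} gives $\|u_1-u_2\|\ge\|z_1-z_2\|$, which is the required contraction.

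Item (ii) then comes from the observation that, for any $v\in Au$, the point $w:=u+\lambda v$ lies in $\R(I+\lambda A)=\D(J_\lambda)$ and satisfies $J_\lambda w=u$. Since $u\in\D(A)\subseteq\overline{\D(A)}\subseteq\D(J_\lambda)$ by~\eqref{RD}, the contraction~\eqref{6} applies to yield $\|J_\lambda u-u\|=\|J_\lambda u-J_\lambda w\|\le\|u-w\|=\lambda\|v\|$, and taking the infimum over $v\in Au$ produces~\eqref{7}. Item (iii) follows by telescoping: since $J_\lambda u\in\D(A)\subseteq\D(J_\lambda)$, every iterate $(J_\lambda)^k u$ is well defined, and iterating~\eqref{6} gives $\|(J_\lambda)^k u-(J_\lambda)^{k-1}u\|\le\|J_\lambda u-u\|$ for every $k\ge 1$; summing $n$ such increments by the triangle inequality delivers~\eqref{8}.

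The only step that requires a genuine algebraic observation is the resolvent formula~(iv); I expect it to be the subtlest point, because it is what reconciles iterates produced with different time steps $\lambda$ and $\mu$, and it will later be used to pass between different discretizations of the viscous semigroup. Setting $z:=J_\lambda u$, the definition furnishes some $v\in Az$ with $u=z+\lambda v$, whence $v=(u-z)/\lambda$, and then
\[
z+\mu v\;=\;z+\frac{\mu}{\lambda}(u-z)\;=\;\frac{\mu}{\lambda}\,u+\frac{\lambda-\mu}{\lambda}\,z\;=\;\tilde u,
\]
which simultaneously exhibits $\tilde u\in\R(I+\mu A)=\D(J_\mu)$ and $z\in J_\mu\tilde u$; single-valuedness of $J_\mu$ from part (i) then forces $J_\mu\tilde u=z=J_\lambda u$, completing~\eqref{9}.
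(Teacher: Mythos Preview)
Your proof is correct and follows the standard argument. Note that the paper itself does not supply a proof of this lemma: it is stated in the review Section~\ref{sec:rev} with a citation to \cite[Lemma~1.2]{CL}, so there is no in-paper proof to compare against; your derivation is exactly the classical one from Crandall--Liggett.
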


Here and in the following $I$ denotes the identity operator.
The Backward Euler approximation converges to a limit as $n\to\infty$.
The limit solution generates a semigroup of contractions, as shown in this 
elegant result by Crandall \& Liggett \cite{CL}. 

\begin{theorem} \label{th:CL} 
  \cite[Theorem~I]{CL}
 Let $A$ be an accretive operator on the Banach space $X$, and 
 $J_\lambda$  the corresponding Backward-Euler operator. 
Assume that there exists $\lambda_0>0$ such that
\begin{equation}\label{range}
  \D\left(J_{\lambda}\right)=
  \R (I+\lambda A)~\supseteq~\overline{\D (A)}\qquad\qquad\forall \lambda\in \,]0,\lambda_0]\,.\end{equation}
Then the following holds.
\begin{itemize}
\item[{\bf (I)}] For every initial datum
  $ u\in \overline{\D (A)}$ and every $t\geq 0$ the limit
  \begin{equation}\label{lim}
    S_t  u~\doteq~
    \lim_{n\to \infty} \left(J_{t/n} \right)^{n}  u
  \end{equation}
  is well defined.

\item[{\bf (II)}] The family of operators $\{S_t\,;~t\geq 0\}$ defined
  at (\ref{lim}) is a continuous semigroup of contractions on the set
  $\overline{\D (A)}$.  Namely
  \begin{itemize}
  \item[(i)] For all $t,s\geq 0 $ and $u\in \overline{\D (A)}$
    one has~~$S_0 u = u$,~ $S_tS_s u = S_{t+s} u$.
  \item[(ii)] For every $u\in \overline{\D (A)}$ the map
    $t\mapsto S_t u$ is continuous.
  \item[(iii)] For every $u_1,u_2\in \overline{\D (A)}$ and every
    $t\geq 0$ one has $\|S_t u_1 - S_t u_2\|\leq \|u_1-u_2\|$.
  \end{itemize}
\end{itemize}
\end{theorem}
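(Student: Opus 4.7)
The plan is to follow the classical Crandall--Liggett strategy: for a fixed $u\in\D(A)$, prove that the sequence $(J_{t/n})^{n} u$ is Cauchy in $n$ via a discrete comparison inequality, extend the limit by density to $\overline{\D(A)}$ using nonexpansiveness, and finally verify the semigroup axioms. All estimates rest on the four structural properties of $J_\lambda$ collected in Lemma~1.2.

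First I would introduce the double-indexed quantity
\[
a_{m,n}\;=\;\left\|J_{\lambda}^{m} u - J_{\mu}^{n} u\right\|,\qquad 0<\mu\le\lambda\le\lambda_{0},
\]
and exploit the resolvent formula~\eqref{9}. Writing $J_\lambda = J_\mu \circ T$ with $T u=\tfrac{\mu}{\lambda} u+\tfrac{\lambda-\mu}{\lambda} J_\lambda u$, applying the contraction~\eqref{6} to $J_\mu$, and using convexity of the norm, one obtains the two-step recursion
\[
a_{m,n}\;\le\;\frac{\mu}{\lambda}\,a_{m-1,n-1}+\frac{\lambda-\mu}{\lambda}\,a_{m,n-1},
\]
together with boundary data $a_{m,0}\le m\lambda\,\|Au\|$ and $a_{0,n}\le n\mu\,\|Au\|$ coming from~\eqref{7} and~\eqref{8}. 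A careful double induction on $(m,n)$ then yields the Crandall--Liggett estimate
\[
a_{m,n}\;\le\;\|Au\|\,\sqrt{(m\lambda-n\mu)^{2}+n\mu(\lambda-\mu)}.
\]

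With this estimate in hand, the choice $\lambda=t/m$ and $\mu=t/n$ with $m\le n$ reduces the right-hand side to $\|Au\|\sqrt{t^{2}(1/m-1/n)}$, which tends to $0$ as $m,n\to\infty$. Hence $(J_{t/n})^{n} u$ is Cauchy in $X$ and the limit in~\eqref{lim} is well defined on $\D(A)$. Since each iterate $(J_{t/n})^{n}$ is a contraction by~\eqref{6}, one can extend $S_t$ uniquely to the closure $\overline{\D(A)}$ via an approximation argument, and the contraction property (iii) of the semigroup is inherited directly from~\eqref{6}.

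Finally, the remaining axioms are checked by standard arguments. The identity $S_0 u = u$ is immediate, continuity of $t\mapsto S_t u$ follows from the Crandall--Liggett estimate applied with step sizes adapted to two close times, and $S_{t+s}u=S_t S_s u$ is obtained by comparing $(J_{(t+s)/n})^{n}$ with $(J_{t/n})^{n}(J_{s/n})^{n}$, again via~\eqref{9} and the same recursion machinery. The main obstacle is precisely the inductive proof of the Crandall--Liggett inequality: the recursion forces one to majorize a convex combination of two square roots by a single square root, and this algebraic step requires careful bookkeeping of the ``distance'' term $(m\lambda-n\mu)^{2}$ and the ``drift'' contribution $n\mu(\lambda-\mu)$, balancing them against the convex weights $\mu/\lambda$ and $(\lambda-\mu)/\lambda$. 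Once this inequality is in place, everything else is routine.
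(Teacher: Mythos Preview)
The paper does not give its own proof of this statement: Theorem~\ref{th:CL} is simply quoted from \cite[Theorem~I]{CL} as background, with no argument supplied. Your sketch is the classical Crandall--Liggett proof from that reference, and as a high-level outline it is correct; so in that sense there is nothing to compare---you are reproducing the proof the paper defers to.

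One minor remark on the sketch itself: the Cauchy estimate you wrote, $a_{m,n}\le\|Au\|\sqrt{(m\lambda-n\mu)^{2}+n\mu(\lambda-\mu)}$, is one of several equivalent forms that appear in the literature, and the exact shape of the right-hand side depends on how one organizes the induction (some versions carry an extra term like $m\lambda(\lambda-\mu)$ or use $m\lambda^{2}+n\mu^{2}$ in place of your drift term). Any of these variants suffices for the application with $\lambda=t/m$, $\mu=t/n$, so this is a matter of bookkeeping rather than a gap.
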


Consider a family of  accretive operators $A^\sigma$, 
and   the corresponding semigroups $S^{\sigma}$. 
As shown by Brezis and Pazy \cite{BP}, 
the limit 
$\lim_{\sigma\to 0} S_t^{\sigma} x$
exists if one has the convergence of 
the corresponding Backward Euler operators $J^{\sigma}_\lambda$.

\begin{theorem}
  \label{th:BP}
  \cite[Theorem~3.1]{BP}
  Let $A, A^{\sigma}$, $\sigma\in\,]0, \sigma_0]$  be accretive operators such that
  \begin{displaymath}
    \D\left(J_{\lambda}\right)=\R (I+\lambda A)\supseteq\overline{\D(A)}\,,
    \quad
    \D\left(J_{\lambda}^{\sigma}\right)=\R (I+\lambda
    A^{\sigma})\supseteq\overline{\D (A^{\sigma})}\,,
    \quad\forall \lambda\in]0,\lambda_{0}],\;
    \forall \sigma\in]0,\sigma_{0}].
\end{displaymath}
Let $S, S^{\sigma}$ be the corresponding semigroups (Theorem~\ref{th:CL}), and call 
\begin{displaymath}
  D~\doteq~\bigcap_{\sigma>0}\overline{\D (A^{\sigma} )}\cap \D (A).
\end{displaymath}
If the corresponding Backward Euler operators satisfy
\begin{equation}\label{LBE}
\lim_{\sigma\to 0} J_\lambda^{\sigma} u ~=~J_\lambda u\qquad
\forall  u\in \overline D\,,\;
\forall \lambda\in]0,\lambda_{0}],
\end{equation}
then 
\begin{equation}\label{ls2}
\lim_{\sigma\to 0} S^{\sigma}_t u ~=~S_t u\qquad\qquad\forall u\in 
\overline D,\; \forall t\ge 0,
\end{equation}
and the limit is uniform for $t$ in bounded intervals.
\end{theorem}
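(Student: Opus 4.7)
The strategy is a diagonal argument that compares $S_{t}^{\sigma} u$ and $S_{t} u$ through their Crandall--Liggett approximants $(J_{t/n}^{\sigma})^{n} u$ and $(J_{t/n})^{n} u$, sending $\sigma\to 0$ and $n\to\infty$ in a controlled order. The first step is a \emph{propagation of the hypothesis}: for each fixed $n$ and $\lambda\in(0,\lambda_{0}]$, I would prove by induction on $n$ that $(J_{\lambda}^{\sigma})^{n} v \to (J_{\lambda})^{n} v$ as $\sigma\to 0$. The inductive step splits
\begin{displaymath}
(J_{\lambda}^{\sigma})^{n+1} v - (J_{\lambda})^{n+1} v = \bigl[J_{\lambda}^{\sigma}\!\bigl((J_{\lambda}^{\sigma})^{n} v\bigr) - J_{\lambda}^{\sigma}\!\bigl((J_{\lambda})^{n} v\bigr)\bigr] + \bigl[J_{\lambda}^{\sigma}\!\bigl((J_{\lambda})^{n} v\bigr) - J_{\lambda}\!\bigl((J_{\lambda})^{n} v\bigr)\bigr],
\end{displaymath}
and bounds the first bracket by $\|(J_{\lambda}^{\sigma})^{n} v - (J_{\lambda})^{n} v\|$ via the nonexpansivity~\eqref{6} while the second tends to zero by the hypothesis~\eqref{LBE}.

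The second ingredient is the quantitative form of the Crandall--Liggett convergence: for $w\in\D(A)$ one has an estimate of the shape $\|(J_{t/n})^{n} w - S_{t} w\|\le C(t)\,\|Aw\|/\sqrt{n}$, and analogously for $A^{\sigma}$. To apply these uniformly in $\sigma$ the point $u\in D$ must be replaced by an element of $\D(A^{\sigma})$ whose graph norm is controlled uniformly in $\sigma$. The natural regularization is $w_{\sigma}\doteq J_{\mu}^{\sigma} u$ for a small auxiliary parameter $\mu>0$: the very definition of the backward Euler operator yields $\|A^{\sigma} w_{\sigma}\|\le \|u - J_{\mu}^{\sigma} u\|/\mu$, and~\eqref{LBE} forces $J_{\mu}^{\sigma} u\to J_{\mu} u$, so the right hand side stays bounded as $\sigma\to 0$. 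Set $w\doteq J_{\mu} u$ as the analogue for $A$.

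With these in hand, for $u\in D$ the chain of triangle inequalities
\begin{displaymath}
\|S_{t}^{\sigma} u - S_{t} u\| \le \|u - w_{\sigma}\| + \|S_{t}^{\sigma} w_{\sigma} - (J_{t/n}^{\sigma})^{n} w_{\sigma}\| + \|(J_{t/n}^{\sigma})^{n} w_{\sigma} - (J_{t/n})^{n} w\| + \|(J_{t/n})^{n} w - S_{t} w\| + \|w - u\|
\end{displaymath}
uses the nonexpansivity of the two semigroups (Theorem~\ref{th:CL}) to absorb the outer pairs into $\|u - w_{\sigma}\|$ and $\|u - w\|$, both small for $\mu$ small. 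The two Crandall--Liggett terms are $O(1/\sqrt{n})$ uniformly in $\sigma$ by the previous paragraph, and the middle term vanishes as $\sigma\to 0$ at fixed $n$ by the propagation step together with $w_{\sigma}\to w$. Fixing $\varepsilon>0$ and choosing first $\mu$, then $n$, then $\sigma$ delivers~\eqref{ls2} on $D$, with uniformity in $t$ on bounded intervals coming from the monotone $t$-dependence of $C(t)$; density and nonexpansivity extend the conclusion to $\overline{D}$. The main obstacle is precisely this quantitative step: the hypothesis~\eqref{LBE} is purely qualitative, while the Crandall--Liggett error bound demands uniform-in-$\sigma$ control of the graph norm $\|A^{\sigma}\cdot\|$ at points approximating $u$, and elements of $D$ need not themselves lie in $\D(A^{\sigma})$; the resolvent regularization $J_{\mu}^{\sigma} u$ is exactly the bridge that converts the pointwise convergence~\eqref{LBE} into the uniform quantitative bound required.
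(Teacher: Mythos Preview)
The paper does not supply its own proof of this theorem: it is quoted verbatim as \cite[Theorem~3.1]{BP} and used as a black box, so there is no argument in the paper to compare your proposal against. Your outline is essentially the classical Brezis--Pazy argument and is correct as written. One small point worth making explicit in your induction (Step~1) is why the hypothesis~\eqref{LBE} continues to apply along the iterates: if $v\in\overline D$ then $J_{\lambda}v$ is the limit of $J_{\lambda}^{\sigma}v\in\D(A^{\sigma})$, hence lies in $\overline{\D(A^{\sigma})}$ for every $\sigma$, and also in $\D(A)$ by construction of the resolvent, so $J_{\lambda}v\in D\subset\overline D$ and the induction closes.
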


\section{The resolvent equation for the viscous problem}
\setcounter{equation}{0}
\label{sec:vis}
In this section we assume hypothesis \textbf{f0)} for the flux $f$, 
and study  the resolvent equation for the viscous
conservation law~\eqref{eq:vsc}. 
We establish suitable properties  so that the classical results 
stated in Section 2 can be applied. 
To this end, we consider  
\begin{displaymath}
  u_{t}+\left[f(x,u)-\varepsilon u_{x}\right]_{x}\ni 0
\end{displaymath}
and define the non linear map $A^{\varepsilon}\subset
  \L{1}\left(\mathbb{R},\mathbb{R}\right)
  \times \L{1}\left(\mathbb{R},\mathbb{R}\right)$ as
  \begin{equation}
    \label{eq:Aepsilon_def}
    \left(u,v\right)\in A^{\varepsilon} \quad\text{ if and only if }
    \quad u,\;v\in\L{1}\left(\mathbb{R},\mathbb{R}\right)
    \text{ and }\left[f(x,u)-\varepsilon u_{x}\right]_{x}=v.
  \end{equation}
The domain of the map is
  \begin{equation}
    \label{eq:Aepsilon_dom}
    \mathcal{D}\left(A^{\varepsilon}\right) =
    \left\{u\in\L{1}\left(\mathbb{R},\mathbb{R}\right):
      \ \left[f(x,u)-\varepsilon u_{x}\right]_{x}\in
      \L{1}\left(\mathbb{R},\mathbb{R}\right)\right\}.
  \end{equation}
We also use the notation $A^{\varepsilon}u$ to denote $v$
  in~\eqref{eq:Aepsilon_def}, since $A^{\varepsilon}$ is a single valued operator.

Recall that $I$ is
the identity, $\lambda$ is any positive real number and $\mathcal{R}\left(B\right)$ denotes the
range of a map $B$. 
We consider the resolvent equation 
\begin{equation}
  \label{eq:res_eq}
 u+\lambda A^{\varepsilon}u = w \quad\text{ i.e. }\quad u + \lambda\left[f(x,u)- \varepsilon u_{x}\right]_{x}=w,
\end{equation}
where $w$ is any given function in
$\L1\left(\mathbb{R},\mathbb{R}\right)$. 

We begin with the definition of weak solution to ~\eqref{eq:res_eq}. 

\begin{definition}\label{def:weaksolution}
A function $u\in\Lloc1(\mathbb{R},\mathbb{R})$ is a \em{weak solution} 
to~\eqref{eq:res_eq}, with data $w\in
\L1\left(\mathbb{R},\mathbb{R}\right)$,
if for any test function $\phi\in \mathbf{C}^2_c
\left(\mathbb{R},\mathbb{R}\right)$ it holds
\[
\int_{\mathbb{R}}\left[ \frac{u(x)-w(x)}{\lambda} \phi(x) - f(x,u) \phi'(x) - \varepsilon u(x)\phi''(x) \right]\;dx =0.
\] 
\end{definition}

We now  introduce the definitions of
the upper and lower solutions to \eqref{eq:res_eq}.

\begin{definition}
  \label{def:lower_upper}
  Let $\Omega\subset \mathbb{R}$ be open and
  $w$ be any function in $\Lloc1\left(\Omega,\mathbb{R}\right)$.
  The function
  $u\in\Lloc1\left(\Omega,\mathbb{R}\right)$ is a \textbf{lower solution} to
  \eqref{eq:res_eq} in $\Omega$ if
  \begin{displaymath}
    u + \lambda\left[f(x,u)-\varepsilon u_{x}\right]_{x}\le w
  \end{displaymath}
  holds in $\Omega$ in the sense of distributions.
 On the other hand, the function
  $u\in\Lloc1\left(\Omega,\mathbb{R}\right)$ is an \textbf{upper solution} to
  \eqref{eq:res_eq} in $\Omega$ if
  \begin{displaymath}
    u + \lambda\left[f(x,u)-\varepsilon u_{x}\right]_{x}\ge w
  \end{displaymath}
  holds instead.
\end{definition}

Lower and upper solutions to \eqref{eq:res_eq} satisfy the following
maximum principle.

\begin{theorem}
  \label{thm:max_princ}
  Let $u_{1},\;u_{2}\in\Lloc1\left(\Omega,\mathbb{R}\right)$ be
  respectively a lower and an upper solution to \eqref{eq:res_eq} in
  the open set $\Omega$ with right hand sides respectively equal to
  $w_{1},\;w_{2}\in\Lloc1\left(\Omega,\mathbb{R}\right)$:
  \begin{equation}
    \label{eq:up_low}
    \begin{cases}
      u_{1}+\lambda\left[f\left(x,u_{1}\right)-\varepsilon u_{1,x}\right]_{x}\le w_{1},\\
      u_{2}+\lambda\left[f\left(x,u_{2}\right)-\varepsilon u_{2,x}\right]_{x}\ge w_{2}.
    \end{cases}
  \end{equation}
  Let $a,b\in\mathbb{R}\cup\left\{\pm\infty\right\}$ be such that
  $\left]a,b\right[\subset \Omega$ and
  \begin{equation}
    \label{eq:liminf_prop}
    \liminf_{x\to a^{+}}\left[u_{1}(x)-u_{2}(x)\right]\le 0,\quad 
    \liminf_{x\to b^{-}}\left[u_{1}(x)-u_{2}(x)\right]\le 0,
  \end{equation}
  then the inequality
  \begin{equation}
    \label{eq:max_estimate}
    \int_{a}^{b}\left[u_{1}(x)-u_{2}(x)\right]^{+}\;dx \le
    \int_{a}^{b}\left[w_{1}(x)-w_{2}(x)\right]^{+}\; dx
  \end{equation}
  holds,
  where $\left[\cdot\right]^{+}$ denotes
  the positive part of a real
  number: $\left[t\right]^{+}=\max\left\{0,t\right\}$.
  In particular, if $w_{1}\le w_{2}$ holds in $\left]a,b\right[$, then $u_{1}\le
  u_{2}$ holds in the same interval.
\end{theorem}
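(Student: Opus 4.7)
My plan is a Kato--Stampacchia type maximum principle argument: subtract the two inequalities, test the resulting distributional inequality against a smooth approximation of the positive-part indicator, and exploit the dissipative viscous term to absorb the error produced by the Lipschitz bound on $f$. Setting $v = u_1 - u_2$ and subtracting the two inequalities in~\eqref{eq:up_low} yields, in the sense of distributions on $\Omega$,
\[
v + \lambda\bigl[f(x,u_1)-f(x,u_2)\bigr]_x - \lambda\varepsilon\,v_{xx} \le w_1 - w_2 .
\]
Standard one--dimensional elliptic regularity for the resolvent equation (which reduces to a first--order ODE in $x$ for $u_x$ with bounded measurable right--hand side) ensures that $u_1,u_2$ admit classical first derivatives and that $v\in\Wloc{2}{1}(\Omega,\mathbb{R})$. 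This regularity justifies all the manipulations below, if needed after a preliminary mollification in $x$.

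I plan to test the above inequality against $\phi = \sigma_\eta(v)\,\chi$, where $\sigma_\eta:\mathbb{R}\to[0,1]$ is a smooth nondecreasing approximation of the Heaviside function vanishing on $\{s\le 0\}$ and identically equal to $1$ on $\{s\ge\eta\}$, and $\chi\in\Cc{2}(]a,b[)$ is a nonnegative cutoff. Integration by parts (all boundary contributions at $\partial(\supp\chi)$ vanish) gives
\[
\int v\,\sigma_\eta(v)\,\chi\,dx + \lambda\varepsilon\int\sigma_\eta'(v)\,v_x^{2}\,\chi\,dx \le \int (w_1-w_2)\sigma_\eta(v)\chi\,dx + \lambda\int\bigl(f(x,u_1)-f(x,u_2)\bigr)\sigma_\eta'(v)\,v_x\,\chi\,dx + B_\eta(\chi),
\]
where $B_\eta(\chi)$ collects the terms involving $\chi_x$.

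The crucial step is to absorb the ``bad'' flux term. Using $|f(x,u_1)-f(x,u_2)|\le L\,|v|$ from \textbf{f0)} and Young's inequality,
\[
\lambda\int\bigl(f(x,u_1)-f(x,u_2)\bigr)\sigma_\eta'(v)\,v_x\,\chi\,dx \le \frac{\lambda\varepsilon}{2}\int\sigma_\eta'(v)\,v_x^{2}\,\chi\,dx + \frac{\lambda L^{2}}{2\varepsilon}\int v^{2}\,\sigma_\eta'(v)\,\chi\,dx .
\]
The first summand is absorbed by the dissipation on the left. The second is controlled by the pointwise estimate $v^{2}\sigma_\eta'(v)\le C\eta$ valid on the support $\{0\le v\le\eta\}$, so it vanishes as $\eta\to 0$. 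Passing to the limit $\eta\to 0$ by dominated convergence gives
\[
\int v^{+}\chi\,dx \le \int (w_1-w_2)^{+}\chi\,dx + \lim_{\eta\to 0}B_\eta(\chi).
\]

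The main obstacle is then the boundary contribution: hypothesis~\eqref{eq:liminf_prop} provides only $\liminf$ control of $v$ at $a^{+}$ and $b^{-}$, not trace values. By the very definition of $\liminf$, I can extract sequences $a_k\to a^{+}$ and $b_k\to b^{-}$ along which $v^{+}(a_k), v^{+}(b_k)\to 0$. Choosing cutoffs $\chi_k$ equal to $1$ on $[a_k,b_k]$ and decaying linearly over strips of width $\delta_k$, the $\chi_{k,x}$--contributions in $B_\eta(\chi_k)$ reduce, after letting $\eta\to 0$, to integrals on these strips of quantities bounded by $\lambda|f(x,u_1)-f(x,u_2)|+\lambda\varepsilon|v_x|$ weighted by $1/\delta_k$. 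Using the Lipschitz bound, the absolute continuity of $v$, and the values $v(a_k), v(b_k)$, one first lets $\delta_k\to 0$ (so that the strips shrink to points with small $v^{+}$) and then $k\to\infty$ to conclude. The final assertion of the theorem is immediate: if $w_1\le w_2$ on $]a,b[$, then the right--hand side of~\eqref{eq:max_estimate} vanishes, forcing $u_1\le u_2$ almost everywhere on $]a,b[$.
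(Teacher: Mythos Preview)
Your Kato-type approach is natural, but the proposal has two genuine gaps.

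First, the regularity claim $v\in\Wloc{2}{1}(\Omega,\mathbb{R})$ is unjustified and in fact false under hypothesis \textbf{f0)}. The map $x\mapsto f(x,\omega)$ is only assumed to lie in $\L\infty$; for instance $f(x,\omega)=g(x)\omega$ with $g\in\L\infty\setminus\bvloc$ satisfies \textbf{f0)}, and then $f(x,u_1)-f(x,u_2)=g(x)v(x)$ need not have bounded variation even though $v$ is continuous. Since only the combination $f(x,u_1)-f(x,u_2)-\varepsilon v_x$ lies in $\bvloc$, the two pieces cannot be separated. The paper establishes merely that $v$ is locally absolutely continuous, which is all that holds in general. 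Your integration by parts can still be made rigorous by pairing the signed measure $\bigl[f(x,u_1)-f(x,u_2)-\varepsilon v_x\bigr]_x$ against the continuous compactly supported function $\sigma_\eta(v)\chi$, but the justification you offer is wrong.

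The more serious gap is the treatment of $B_\eta(\chi_k)$. After $\eta\to 0$ its viscous part becomes $-\lambda\varepsilon\int (v^+)_x\,\chi_{k,x}\,dx$, which on the strip $[b_k,b_k+\delta_k]$ equals $\frac{\lambda\varepsilon}{\delta_k}\bigl[v^+(b_k+\delta_k)-v^+(b_k)\bigr]$. Hypothesis~\eqref{eq:liminf_prop} lets you choose $b_k$ with $v^+(b_k)$ small, but it gives no information on $v^+(b_k+\delta_k)$ or on $v_x$ near $b_k$; the quantity you must bound is a difference quotient of $v^+$, not a value of $v^+$, and it has no reason to vanish. This is precisely the obstruction the paper overcomes, by a different device: instead of a cutoff, the paper works on each connected component $]\alpha,\beta[$ of $\{x\in]a,b[:v(x)>0\}$ (where $v$ vanishes at finite endpoints by continuity), integrates the inequality between Lebesgue points, and then proves via a Gronwall-type contradiction that
\[
\liminf_{\xi\to\beta^-}\bigl[\varepsilon v_x(\xi)+Lv(\xi)\bigr]\le 0,\qquad
\liminf_{\eta\to\alpha^+}\bigl[-\varepsilon v_x(\eta)+Lv(\eta)\bigr]\le 0.
\]
This joint control of $v$ and $v_x$ at the endpoints is exactly what your argument needs but does not supply; without an analogue of this step the boundary contribution cannot be shown to vanish.
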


\begin{proof}
  Define the function $v=u_{1}-u_{2}$. Subtracting the
  inequalities in \eqref{eq:up_low} we have 
  \begin{equation}
    \label{eq:v_ineq}
    v + \lambda\left[f\left(x,u_{1}\right)-f\left(x,u_{2}\right) -
    \varepsilon v_{x}\right]_{x}\le w_{1}-w_{2},
\end{equation}
in the space of distributions. 
Therefore, the distribution 
\[w_{1}-w_{2}-v- \lambda\left[f\left(x,u_{1}\right)-f\left(x,u_{2}\right) -
    \varepsilon v_{x}\right]_{x}\] 
    is non negative and consequently
  a positive Radon measure on $\Omega$ (see \cite[Theorem
  2.14]{Rudin}). Since $w_{1}-w_{2}-v$ is a locally integrable function,
  $\left[f\left(x,u_{1}\right)-f\left(x,u_{2}\right)-\varepsilon v_{x}\right]_{x}$ is a
  signed Radon measure. Therefore 
  \begin{displaymath}
f\left(x,u_{1}\right)-f\left(x,u_{2}\right)-\varepsilon v_{x}    \in\bvloc\left(\Omega,\mathbb{R}\right)
    \subset \Lloc1\left(\Omega,\mathbb{R}\right).
  \end{displaymath}
Hypothesis \textbf{f0)} implies that
  $f(x,u_{1}),f(x,u_{2})\in\Lloc1\left(\Omega,\mathbb{R}\right)$,
   leading to 
  $v_{x}\in\Lloc1\left(\Omega,\mathbb{R}\right)$. 
  Thus,   $v$ is locally absolutely continuous in $\Omega$.

If $v\le 0$ holds in $\left]a,b\right[$, there is nothing to prove. 
Otherwise, let
$\left]\alpha,\beta\right[$ be any connected component of the open set
\[V=\left\{x\in\left]a,b\right[ :\; v(x)>0\right\}.\]
Note that we do not exclude
the possibilities $\alpha=-\infty$ and $\beta=+\infty$.
Now, hypothesis~\eqref{eq:liminf_prop} and the continuity of $v$ in
$\Omega$ imply
\begin{equation}
  \label{eq:liminf_v}
  \liminf_{x\to \alpha^{+}}v(x)= 0,\quad 
  \liminf_{x\to \beta^{-}}v(x)= 0,\quad v(x) > 0 \text{ for all }
  x\in\left]\alpha,\beta\right[.
\end{equation}

Let $\mathcal{L}$ be the intersection of the Lebesgue points in
$\Omega$ of the functions $f\left(x,u_{1}\right)$,
$f\left(x,u_{2}\right)$ and $v_{x}$. Fix
$\eta,\;\xi\in\mathcal{L}$ such that $\alpha< \eta < \xi < \beta$,
we evaluate the measures in~\eqref{eq:v_ineq} over the set
$]\eta,\xi[$ and obtain
\begin{equation}
  \begin{split}
    \int_{\eta}^{\xi}v(x)\; dx &+ \lambda\left[f\left(\xi,u_{1}(\xi)\right)
    -f\left(\xi,u_{2}(\xi)\right)\right] \\
    & -\lambda\left[f\left(\eta,u_{1}(\eta)\right)
    -f\left(\eta,u_{2}(\eta)\right)\right]-\lambda\varepsilon
  \left[v_{x}\left(\xi\right)
  -v_{x}\left(\eta\right)\right]\\
&\le \int_{\eta}^{\xi}\left[w_{1}(x)-w_{2}(x)\right]\;dx
\le \int_{\eta}^{\xi}\left[w_{1}(x)-w_{2}(x)\right]^{+}\;dx .
  \end{split}
\end{equation}
Using \textbf{f0)} and recalling that $v>0$ in $\left]\alpha,\beta\right[$, 
the above inequality becomes
\begin{equation}
  \label{eq:before_claim}
    \int_{\eta}^{\xi}v(x)\; dx \le \lambda\left[\varepsilon v_{x}\left(\xi\right)
    +  L v\left(\xi\right)\right] +\lambda \left[-\varepsilon v_{x}\left(\eta\right)
    +L v\left(\eta\right)\right]
+ \int_{\eta}^{\xi}\left[w_{1}(x)-w_{2}(x)\right]^{+}\;dx .
\end{equation}

Now we claim that
\begin{equation}
  \label{eq:claim}
  \liminf_{\overset{\eta\to \alpha^{+}}{\eta\in\mathcal{L}}}\left[-\varepsilon v_{x}\left(\eta\right)
    + L v\left(\eta\right)\right]\le 0,\qquad
  \liminf_{\overset{\xi\to \beta^{-}}{\xi\in\mathcal{L}}}\left[\varepsilon v_{x}\left(\xi\right)
    + L v\left(\xi\right)\right]\le 0. 
\end{equation}
Indeed, by contradiction, suppose that the second inequality
in~\eqref{eq:claim} were not true (the other case being similar).
Then there should exist $\gamma>0$ and $\xi_{o}<\beta$ such that
$\varepsilon v_{x}\left(\xi\right)
  + L v\left(\xi\right)\ge\gamma$ for all
$\xi\in\left(\xi_{o},\beta\right)\cap\mathcal{L}$. Solving this differential
inequality with $v\left(\xi_{o}\right)>0$ as initial data we have
\begin{displaymath}
  v\left(\xi\right)\ge
  \left[v\left(\xi_{o}\right)-\frac{\gamma}{L}\right]
  e^{-\frac{L}{\varepsilon}\left(\xi-\xi_{o}\right)}+\frac{\gamma}{L}.
\end{displaymath}
If in this last inequality we take the lower limit as $\xi\to\beta$
we have (including the case $\beta=+\infty$):
\begin{displaymath}
    \liminf_{\xi\to\beta^{-}}v\left(\xi\right)\ge
    v\left(\xi_{o}\right)e^{-\frac{L}{\varepsilon}\left(\beta-\xi_{o}\right)}+
    \frac{\gamma}{L}
    \left(1 -
      e^{-\frac{L}{\varepsilon}\left(\beta-\xi_{o}\right)}\right)>0
\end{displaymath}
which contradicts \eqref{eq:liminf_v}. 

Now we take the lower limits in \eqref{eq:before_claim} as
$\eta\to\alpha^{+}$ and $\xi\to\beta^{-}$.  Using
\eqref{eq:claim} we obtain
  \begin{displaymath}
    \int_{\alpha}^{\beta}v(x)\;dx \le
    \int_{\alpha}^{\beta}\left[w_{1}(x)-w_{2}(x)\right]^{+}\; dx.
  \end{displaymath}
  
  Finally, writing $V$ as the union of its connected components
  $V=\bigcup_{i=1}^{N}\left(\alpha_{i},\beta_{i}\right)$ (with $N=+\infty$
  if there are countable many connected components), we compute
  \begin{equation}
    \begin{split}
      \int_{a}^{b}&\left[u_{1}(x)-u_{2}(x)\right]^{+}\;dx=
      \int_{a}^{b}\left[v(x)\right]^{+}\;dx
      =\int_{V}v(x)\; dx\\
      &=\sum_{i=1}^{N}\int_{\alpha_{i}}^{\beta_{i}}v(x)\; dx
      \le
      \sum_{i=1}^{N}\int_{\alpha_{i}}^{\beta_{i}}\left[w_{1}(x)-w_{2}(x)\right]^{+}\;
    dx
    \le \int_{a}^{b}\left[w_{1}(x)-w_{2}(x)\right]^{+}\; dx,
    \end{split}
  \end{equation}
  proving the theorem.
\end{proof}

If the functions $u$ and $w$ in \eqref{eq:res_eq} are
integrable over $\mathbb{R}$, then  the operator $A^{\varepsilon}$ defined
in~\eqref{eq:Aepsilon_def} is accretive. 
Indeed we have the following result.

%

\begin{corollary}
  \label{cor:contraction_eq}
  Let
  $u_{1},\;u_{2},\;w_{1},\;w_{2}\in\L1\left(\mathbb{R},\mathbb{R}\right)$
  satisfy
  \begin{equation}
    \label{eq:contraction_eq1}
    \begin{cases}
      u_{1}+\lambda\left[f\left(x,u_{1}\right)-\varepsilon u_{1,x}\right]_{x}= w_{1},\\
      u_{2}+\lambda\left[f\left(x,u_{2}\right)-\varepsilon u_{2,x}\right]_{x}= w_{2},
    \end{cases}
  \end{equation}
  in the sense of distribution, then the following inequalities hold:
  \begin{align}
    \label{eq:max_princ_eq2}
    \int_{\mathbb{R}}\left[u_{1}(x)-u_{2}(x)\right]^{+}\; dx
    &\le \int_{\mathbb{R}}\left[w_{1}(x)-w_{2}(x)\right]^{+}\; dx,\\
    \label{eq:contraction_eq2}
    \int_{\mathbb{R}}\left|u_{1}(x)-u_{2}(x)\right|\; dx
    &\le \int_{\mathbb{R}}\left|w_{1}(x)-w_{2}(x)\right|\; dx.
  \end{align}
\end{corollary}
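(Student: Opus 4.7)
The plan is to derive both inequalities as direct consequences of Theorem~\ref{thm:max_princ}, taking $\Omega=\mathbb{R}$ and $(a,b)=(-\infty,+\infty)$. Since each of the equalities in \eqref{eq:contraction_eq1} holds in the sense of distributions, the pair $(u_1,u_2)$ satisfies the lower/upper solution hypothesis \eqref{eq:up_low} with right-hand sides $(w_1,w_2)$, and the pair $(u_2,u_1)$ satisfies it with right-hand sides $(w_2,w_1)$.

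First I would verify the liminf hypothesis \eqref{eq:liminf_prop} at $\pm\infty$ for $v=u_1-u_2$. From the proof of Theorem~\ref{thm:max_princ} we know that $v$ is (locally) absolutely continuous on $\mathbb{R}$. Moreover, $u_1,u_2\in\L1(\mathbb{R},\mathbb{R})$, so $v\in\L1(\mathbb{R},\mathbb{R})$, and the integrability of $|v|$ forces the existence of sequences $x_n\to+\infty$ and $y_n\to-\infty$ along which $v(x_n),\,v(y_n)\to 0$; otherwise $|v|$ would be bounded below by a positive constant on a set of infinite measure. Hence
\[
\liminf_{x\to+\infty}\left[u_1(x)-u_2(x)\right]\le 0,\qquad
\liminf_{x\to-\infty}\left[u_1(x)-u_2(x)\right]\le 0,
\]
so the hypotheses of Theorem~\ref{thm:max_princ} are met, and \eqref{eq:max_estimate} with $(a,b)=(-\infty,+\infty)$ yields exactly \eqref{eq:max_princ_eq2}.

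Next, by exchanging the roles of the indices $1$ and $2$ (which is symmetric since both equations in \eqref{eq:contraction_eq1} are genuine equalities), the same argument gives
\[
\int_{\mathbb{R}}\left[u_{2}(x)-u_{1}(x)\right]^{+}\; dx
\le \int_{\mathbb{R}}\left[w_{2}(x)-w_{1}(x)\right]^{+}\; dx.
\]
Adding this to \eqref{eq:max_princ_eq2} and using the identity $|t|=[t]^{+}+[-t]^{+}$ applied pointwise to both $u_1-u_2$ and $w_1-w_2$ produces \eqref{eq:contraction_eq2}.

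The only nontrivial step is the verification of the liminf condition at infinity; everything else is a direct invocation of the previously proved maximum principle and an elementary symmetrization. In particular this corollary, together with the fact that $\lambda>0$ is arbitrary, immediately implies that the operator $A^{\varepsilon}$ defined in \eqref{eq:Aepsilon_def} is accretive in the sense of \eqref{accr}, which is the property needed to invoke the Crandall--Liggett theory in the subsequent sections.
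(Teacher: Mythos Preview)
Your proof is correct and follows essentially the same approach as the paper: both observe that the equalities make $u_1$ and $u_2$ simultaneously lower and upper solutions, use integrability of $u_1-u_2$ to verify the liminf condition \eqref{eq:liminf_prop} at $\pm\infty$, apply Theorem~\ref{thm:max_princ} on $(-\infty,+\infty)$ to obtain \eqref{eq:max_princ_eq2}, and then swap indices and add to get \eqref{eq:contraction_eq2}. Your version is slightly more explicit about why the liminf condition holds (via the continuity of $v$ established in the proof of Theorem~\ref{thm:max_princ} combined with $v\in\L1$), and your closing remark about accretivity of $A^\varepsilon$ is a correct and useful observation.
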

\begin{proof}
  According to Definition~\ref{def:lower_upper},
  $u_{1}$ and $u_{2}$ are both lower and upper solutions
  to~\eqref{eq:res_eq} with right hand side respectively $w_{1}$ and $w_{2}$.
  Since
  they are integrable, we have
  \[\liminf_{x\to \pm\infty}\left[u_{1}(x)-u_{2}(x)\right]\le 0.\]
  Theorem~\ref{thm:max_princ} with $a=-\infty$ and
  $b=+\infty$ can be applied to get~\eqref{eq:max_princ_eq2}.
  Changing the role of $u_{1}$ and $u_{2}$ allows us to
  obtain~\eqref{eq:contraction_eq2}.
\end{proof}

In the following theorem 
we establish some properties of the
Backward Euler operator
$J_{\lambda}^{\varepsilon}=\left(I+\lambda
  A^{\varepsilon}\right)^{-1}$ defined in~\eqref{Jl} and 
show that $\mathcal{R}\left(I+\lambda
  A^{\varepsilon}\right)=
\L1\left(\mathbb{R},\mathbb{R}\right)$.

  \begin{theorem}
      \label{lem:has_sol0}
      Suppose the flux $f(x,\omega)$ satisfies hypothesis \textbf{f0)}. Then
      for any $\lambda$, $\varepsilon>0$,
      $w\in\L1\left(\mathbb{R},\mathbb{R}\right)$, 
      there exists a
  unique weak solution
  $u=J_{\lambda}^{\varepsilon}w\in\L1\left(\mathbb{R},\mathbb{R}\right)$ 
  to equation
  \eqref{eq:res_eq}. The maps
  $A^{\varepsilon}:\D\left(A^{\varepsilon}\right)
  \to \L1\left(\mathbb{R},\mathbb{R}\right)$ and
  $J_{\lambda}^{\varepsilon}:\L1\left(\mathbb{R},\mathbb{R}\right)
  \to \L1\left(\mathbb{R},\mathbb{R}\right)$
  satisfy
  \begin{enumerate}[(i)]
  \item
    $J_{\lambda}^{\varepsilon}w_{1}\le J_{\lambda}^{\varepsilon}w_{2}$
    whenever $w_{1}\le w_{2}$, (monotonicity);
  \item
    $\displaystyle{\int_\mathbb{R}J_{\lambda}^{\varepsilon}w\;dx=
      \int_\mathbb{R}w\;dx}$
    for any $w\in\L1\left(\mathbb{R},\mathbb{R}\right)$,
    (conservation);
  \item
    $\left\|J^{\varepsilon}_{\lambda}w_{1}-J^{\varepsilon}_{\lambda}w_{2}
    \right\|_{\L1\left(\mathbb{R},\mathbb{R}\right)}\le
    \left\|w_{1}-w_{2}\right\|_{\L1\left(\mathbb{R},\mathbb{R}\right)}$,
    (contraction property);
  \item
    $\overline{\D\left(A^{\varepsilon}\right)}=
  \L1\left(\mathbb{R},\mathbb{R}\right)$ (density of the domain).
  \end{enumerate}
\end{theorem}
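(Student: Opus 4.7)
The plan is to establish existence first (this is the main point) and then read off the four stated properties from the already-proved $L^1$-contraction, the maximum principle of Theorem~\ref{thm:max_princ}, and a direct integration.

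For existence I would fix $\varepsilon>0$ and recast the resolvent equation~\eqref{eq:res_eq} as the Helmholtz-type equation $(I - \lambda\varepsilon\partial_{xx})u = w - \lambda\,\partial_x f(\cdot,u)$. Setting $\Lambda = \sqrt{\lambda\varepsilon}$, I take $G(x) = \frac{1}{2\Lambda} e^{-|x|/\Lambda}$, the Green's function of $I - \Lambda^2 \partial_{xx}$ on $\mathbb{R}$, for which $\|G\|_{L^1}=1$ and $\|G_x\|_{L^1}=1/\Lambda$. Convolving with $G$ yields the fixed-point formulation
\[
u ~=~ G * w ~-~ \lambda\, G_x * f(\cdot,u(\cdot)).
\]
Using \textbf{f0)} -- specifically $|f(x,v)| \le L|v| + |f(x,0)|$ with $f(\cdot,0)\in L^1$ -- this right-hand side maps $L^1(\mathbb{R})$ into itself and is a strict contraction whenever $L\sqrt{\lambda/\varepsilon}<1$. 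Banach's fixed-point theorem then produces a unique weak solution $u=J_\lambda^\varepsilon w \in L^1(\mathbb{R})$ for all sufficiently small $\lambda$. To cover an arbitrary $\lambda>0$, I would pick $\mu$ small enough that $J_\mu^\varepsilon$ is defined and solve the equivalent fixed-point problem
\[
u ~=~ J_\mu^\varepsilon\!\left(\tfrac{\mu}{\lambda}\,w + \tfrac{\lambda-\mu}{\lambda}\,u\right),
\]
whose right-hand side is a $(1-\mu/\lambda)$-contraction in $L^1$ by Corollary~\ref{cor:contraction_eq}; Banach again gives existence.

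With existence in hand, property (iii) is exactly Corollary~\ref{cor:contraction_eq}, and (i) follows from Theorem~\ref{thm:max_princ} applied on $]-\infty,+\infty[$, since $u_1-u_2 \in L^1(\mathbb{R})$ automatically gives $\liminf_{x\to\pm\infty}[u_1(x)-u_2(x)]\le 0$. For conservation (ii) I would integrate the equation over $\mathbb{R}$: because $A^\varepsilon u = (w-u)/\lambda \in L^1$, the flux $F(x):=f(x,u)-\varepsilon u_x$ is absolutely continuous with $L^1$ derivative, hence admits finite limits at $\pm\infty$, and a cutoff argument combined with $u, f(\cdot,u)\in L^1$ forces these limits to vanish, giving $\int u\,dx = \int w\,dx$. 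For density (iv), note that $J_\lambda^\varepsilon w \in \mathcal{D}(A^\varepsilon)$ by construction, so it is enough to prove $J_\lambda^\varepsilon w \to w$ in $L^1$ as $\lambda\to 0^+$. For $w\in \mathbf{C}^\infty_c$ this follows directly from the Green's function representation: $G*w\to w$ in $L^1$, while the correction term is bounded by $\sqrt{\lambda/\varepsilon}\,\|f(\cdot,u)\|_{L^1}=O(\sqrt\lambda)$ after absorbing the $L\sqrt{\lambda/\varepsilon}\|u\|_{L^1}$ contribution coming from \textbf{f0)ii)}. The contraction (iii) then upgrades this convergence from $\mathbf{C}^\infty_c$ to all of $L^1$.

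The step I expect to be the main obstacle is the conservation property (ii): ruling out nonzero limits of the flux $F$ at $\pm\infty$ is not immediate from $L^1$ information alone and requires a careful use of a smooth cutoff in the weak formulation together with the $L^1$ bounds on $u$, $f(\cdot,u)$, and $F_x$, to show that the boundary contributions vanish. A secondary subtlety is obtaining the uniform-in-$\lambda$ $L^1$ bound on $f(\cdot,u)$ in the density argument, which is resolved by using the contraction estimate above to absorb the $u$-dependent piece back into the left-hand side.
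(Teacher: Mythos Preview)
Your proposal is correct and follows the paper's approach almost verbatim: the same Green's-function fixed-point formulation for existence at small $\lambda$, the same resolvent-identity bootstrap to arbitrary $\lambda$, and the same derivation of (i) and (iii) from Corollary~\ref{cor:contraction_eq}.

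The one substantive difference is conservation (ii), which you flag as the main obstacle. The paper sidesteps your cutoff/flux-limit argument entirely: it simply integrates the fixed-point identity $u = G*w - \lambda\,G_x * f(\cdot,u)$ over $\mathbb{R}$ and applies Fubini. Since $\int_{\mathbb{R}} G = 1$ and $\int_{\mathbb{R}} G_x = 0$, the first convolution term gives $\int w$ and the second vanishes, yielding $\int u = \int w$ in one line. Your route through the flux $F = f(\cdot,u)-\varepsilon u_x$ also works (the limits $F(\pm\infty)$ exist because $F_x\in L^1$, and a nonzero limit would force $u_x$ to have a nonzero constant part at infinity, contradicting $u\in L^1$), but it is unnecessarily delicate compared to the paper's direct computation. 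For (iv), the paper argues for general $w\in L^1$ directly (bounding $\|u_\lambda\|_{L^1}$ uniformly via comparison with a fixed $\bar u\in\mathcal{D}(A^\varepsilon)$ and property~\eqref{7}) rather than first treating $\mathbf{C}_c^\infty$ and then extending by contraction, but both versions are fine.
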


\begin{proof}
  The uniqueness, the monotonicity $(i)$ and the contraction property
  $(iii)$ 
  are direct consequences
  of Corollary~\ref{cor:contraction_eq}. 
  We now show the existence.
  For $\lambda>0$, $x\in\mathbb{R}$,
   we consider the traditional convolution kernel
  \begin{equation}
    \label{eq:conv_kernel}
    H\left(\lambda,x\right)=\frac{1}{2\sqrt{\lambda}}e^{-\frac{\left|x\right|}{\sqrt{\lambda}}}.
  \end{equation}
  It has the following properties:
  \begin{equation}
    \label{eq:conv_ker_prop}
        \begin{cases}
      H_{x}\left(\lambda,x\right)=-\frac{1}{\sqrt{\lambda}}\sign\left(x\right)H\left(\lambda,x\right),\\
      H_{xx}\left(\lambda,\cdot\right)=\frac{1}{\lambda}\left(H\left(\lambda,\cdot\right)-\delta_{0}\right),
      \\
      \lim_{\lambda\to 0^{+}}H\left(\lambda,\cdot\right)=\delta_{0},
      \quad \mbox{where $\delta_{0}$ is the unit mass at $x=0$,}\\
      \left\|H\left(\lambda,\cdot\right)\right\|_{\L1\left(\mathbb{R},\mathbb{R}\right)}=1,\qquad
      \left\|H_{x}\left(\lambda,\cdot\right)\right\|_{\L1\left(\mathbb{R},\mathbb{R}\right)}=\frac{1}{\sqrt{\lambda}}.
    \end{cases}
  \end{equation}
  
  Fix $\varepsilon,\lambda>0$ and for any $w\in\L1\left(\mathbb{R},\mathbb{R}\right)$ 
  we define the
  Lipschitz continuous map $\Lambda_{\lambda}^{w}:
  \L1\left(\mathbb{R},\mathbb{R}\right)\to\L1\left(\mathbb{R},\mathbb{R}\right)$
  as
  \begin{equation}
    \label{eq:Lambda_def}
    \left[\Lambda_{\lambda}^{w}\left(u\right)\right](x)=\int_{\mathbb{R}}H\left(\lambda
      \varepsilon,x-y\right)w(y)\; dy - \int_{\mathbb{R}}\lambda
    H_{x}\left(\lambda \varepsilon,x-y\right)f\left(y,u(y)\right)\; dy.
  \end{equation}
  
  By properties~\eqref{eq:conv_ker_prop}, it follows that
  $u\in\L1\left(\mathbb{R},\mathbb{R}\right)$ 
  is a weak solution
  to~\eqref{eq:res_eq} if and only if $u=\Lambda_{\lambda}^{w}\left(u\right)$.
  Moreover, one has 
  \begin{displaymath}
    \left\|\Lambda_{\lambda}^{w}\left(u_{1}\right)-\Lambda_{\lambda}^{w}\left(u_{2}\right)\right\|_{\L1\left(\mathbb{R},\mathbb{R}\right)}
    \le L\sqrt{\frac{\lambda}{\varepsilon}}
    \left\|u_{1}-u_{2}\right\|_{\L1\left(\mathbb{R},\mathbb{R}\right)},\quad
   \text{ for any }u_{1},u_{2}\in \L1\left(\mathbb{R},\mathbb{R}\right).
  \end{displaymath}
  
  Set $\lambda_{o}=\frac{\varepsilon}{2L^{2}}>0$, so that 
  $ L\sqrt{\frac{\lambda_{o}}{\varepsilon}}<1$. 
Then,  for any $\lambda\in]0,\lambda_{o}]$,
  $\Lambda_{\lambda}^{w}$ is a strict contraction in
  $\L1\left(\mathbb{R},\mathbb{R}\right)$.
  As a consequence, it has
  a unique fixed point $u\;=\;\Lambda_{\lambda}^{w}u$,  which we denote
  by $J_{\lambda}^{\varepsilon}w$. 
  We conclude that $\mathcal{R}\left(I+\lambda
    A^{\varepsilon}\right)=\L1\left(\mathbb{R},\mathbb{R}\right)$ holds
  for any $\lambda\in]0,\lambda_{o}]$, 
  and the domain of
  $A^{\varepsilon}$ is not empty. 
  
  Using the contraction property
  of $J_{\lambda_{o}}^{\varepsilon}$ and a
  classical argument that we repeat here for completeness (see
  \cite[Lemma~2.13]{Miyadera}), we prove that
  $\mathcal{R}\left(I+\lambda
    A^{\varepsilon}\right)=\L1\left(\mathbb{R},\mathbb{R}\right)$ for
  any $\lambda>0$. Indeed, fix $\lambda>\lambda_{o}$,
  $w\in\L1\left(\mathbb{R},\mathbb{R}\right)$, 
  we need to show that there is a function $u\in\D
  \left(A^{\varepsilon}\right)$
  that satisfies
  \begin{displaymath}
    \left(I+\lambda A^{\varepsilon}\right)u=w.
  \end{displaymath}
  Multiplying this equality by $\frac{\lambda_{o}}{\lambda}$, 
  algebraic manipulations give
  \begin{displaymath}
    \left(I+\lambda_{o}A^{\varepsilon}\right)u
    =\left(1-\frac{\lambda_{o}}{\lambda}\right)u+\frac{\lambda_{o}}{\lambda}w. 
  \end{displaymath}
  
  By the surjectivity of
  $\left(I+\lambda_{o}A^{\varepsilon}\right)$, the above equation is equivalent to the
  fixed point equation $u=T_{w}u$, where the map
  $T_{w}:\L1\left(\mathbb{R},\mathbb{R}\right)\to\L1\left(\mathbb{R},
    \mathbb{R}\right)$ is defined by
  \begin{displaymath}
    T_{w}u=J_{\lambda_{o}}^{\varepsilon}\left(\left(1-
        \frac{\lambda_{o}}{\lambda}\right)u+
      \frac{\lambda_{o}}{\lambda}w\right).
  \end{displaymath}
  Since $J_{\lambda_{o}}^{\varepsilon}$  is a contraction, we compute
  \begin{displaymath}
    \left\|T_{w}u_{1}-T_{w}u_{2}\right\|_{\L1\left(\mathbb{R},\mathbb{R}\right)}
    \le
    \left(1-\frac{\lambda_{o}}{\lambda}\right)\left\|u_{1}-
      u_{2}\right\|_{\L1\left(\mathbb{R},\mathbb{R}\right)}.
  \end{displaymath}
  One concludes that $T_{w}$ is a strict contraction in
  $\L1\left(\mathbb{R},\mathbb{R}\right)$, and hence it has a unique fixed
  point $u = T_{w} u$.
  
  To prove $(ii)$, it is enough to integrate over $\mathbb{R}$ the identity
  $u=\Lambda_{\lambda}^{w}u$ and apply Fubini's theorem.

  It remains to prove $(iv)$, the density of the domain of
  $A^{\varepsilon}$. Fix
  $w\in\L1\left(\mathbb{R},\mathbb{R}\right)$ and observe that
  $u_{\lambda}=J_{\lambda}^{\varepsilon}w\in\D\left(A^{\varepsilon}\right)$ for
  any $\lambda>0$. Hence it suffices to show that $u_{\lambda}\to w$
  as $\lambda\to 0$. 
  Fix a point $\bar  u\in\D\left(A^{\varepsilon}\right)$.  
  The contraction property of $J^{\varepsilon}_{\lambda}$  and~\eqref{7} imply
  \begin{equation}
    \begin{split}
      \left\|u_{\lambda}\right\|_{\L1\left(\mathbb{R},\mathbb{R}\right)}
      &\le\left\|J_{\lambda}^{\varepsilon}w-J_{\lambda}^{\varepsilon}\bar
        u \right\|_{\L1\left(\mathbb{R},\mathbb{R}\right)}+
      \left\|J_{\lambda}^{\varepsilon}\bar u-\bar
        u\right\|_{\L1\left(\mathbb{R},\mathbb{R}\right)} +\left\|\bar
        u\right\|_{\L1\left(\mathbb{R},\mathbb{R}\right)}\\
      &\le\left\|w-\bar u\right\|_{\L1\left(\mathbb{R},\mathbb{R}\right)}
      +\lambda\left\|A^{\varepsilon}\bar u\right\|_{\L1\left(\mathbb{R},\mathbb{R}\right)}
      +\left\|\bar u\right\|_{\L1\left(\mathbb{R},\mathbb{R}\right)}\\
      &\le C,
    \end{split}
  \end{equation}
  where $C$ is a constant independent of $\lambda\in]0,1]$.
  Using \textbf{f0)}, and the fact that $u_{\lambda}$ is the unique fixed point of
  $\Lambda_{\lambda}^{w}$, we compute
  \begin{equation}
    \begin{split}
      &\left\|u_{\lambda}-w\right\|_{\L1\left(\mathbb{R},\mathbb{R}\right)}=
      \left\|\Lambda_{\lambda}^{w}\left(u_{\lambda}\right)-
        w\right\|_{\L1\left(\mathbb{R},\mathbb{R}\right)}\\
      &\quad\le\int_{\mathbb{R}^{2}}H\left(\lambda\varepsilon,x-y\right)
      \left|w(x)-w(y)\right|\;dxdy+\int_{\mathbb{R}^{2}}
      \lambda \left|H_{x}\left(\lambda\varepsilon,x-y\right)\right|
      \left|f\left(y,u_{\lambda}(y)\right)\right|\; dxdy\\
      &\quad\le\int_{\mathbb{R}}H\left(\lambda\varepsilon,\xi\right)
      \left[\int_{\mathbb{R}}\left|w(x)-w(x-\xi)\right|\;dx\right]d\xi+\int_{\mathbb{R}}
      \frac{\lambda}{\sqrt{\lambda\varepsilon}}
      \left[\left|f\left(y,0\right)\right|+L\left|u_{\lambda}(y)\right|\right]\; dy\\
      &\quad\le\int_{\mathbb{R}}H\left(\lambda\varepsilon,\xi\right)
      \left[\int_{\mathbb{R}}\left|w(x)-w(x-\xi)\right|\;dx\right]d\xi+
      \sqrt{\frac{\lambda}{\varepsilon}}\left[L_{1}
      +L\cdot C\right]\\
    &\quad \overset{\lambda\to 0}{\longrightarrow}0,
  \end{split}
  \end{equation}
  completing the proof.
\end{proof}

We are now ready to apply Theorem~\ref{th:CL}.

\begin{theorem}
  \label{th:gen_linfty_f}
  If the flux $f$ satisfies the hypothesis \textbf{f0)},
  then the operator $A^{\varepsilon}$
  defined 
  in~\eqref{eq:Aepsilon_def} generates (in the sense of
  Theorem~\ref{th:CL}) a non linear continuous semigroup
  $S^{\varepsilon}_{t}:\L1\left(\mathbb{R},\mathbb{R}\right)
  \to\L1\left(\mathbb{R},\mathbb{R}\right)$
  of contractions.
  For any $\bar u\in\L1\left(\mathbb{R},\mathbb{R}\right)$,
  the trajectory of the semigroup
  $u(t,x)=\left(S^{\varepsilon}_{t}\bar u\right)(x)$
  belongs to
  $\C0\left([0,+\infty),\L1\left(\mathbb{R},\mathbb{R}\right)\right)$
  and
  is a weak solutions
  to the parabolic equation~\eqref{eq:vsc}. 
\end{theorem}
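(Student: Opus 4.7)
The approach is to apply the Crandall--Liggett theorem (Theorem~\ref{th:CL}) to the operator $A^{\varepsilon}$. Three ingredients must be verified: accretivity, the range condition, and density of the domain. Accretivity is immediate from the $L^{1}$ contraction~\eqref{eq:contraction_eq2} in Corollary~\ref{cor:contraction_eq}. The range identity $\mathcal{R}(I+\lambda A^{\varepsilon})=\L1\left(\mathbb{R},\mathbb{R}\right)$ for every $\lambda>0$ and the density $\overline{\mathcal{D}(A^{\varepsilon})}=\L1\left(\mathbb{R},\mathbb{R}\right)$ are both supplied by Theorem~\ref{lem:has_sol0}. Invoking Theorem~\ref{th:CL} then produces the semigroup $S^{\varepsilon}_{t}$ on all of $\L1\left(\mathbb{R},\mathbb{R}\right)$, together with the contraction property and strong $t$-continuity, so the trajectory automatically lies in $\C0\left([0,+\infty),\L1\left(\mathbb{R},\mathbb{R}\right)\right)$.

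It remains to check that each trajectory $u(t,x)=\bigl(S^{\varepsilon}_{t}\bar u\bigr)(x)$ is a weak solution of~\eqref{eq:vsc}. Fix $T>0$ and $n\in\mathbb{N}$, set $\lambda_{n}=T/n$, and let $u^{n}_{k}=(J^{\varepsilon}_{\lambda_{n}})^{k}\bar u$ for $k=0,1,\dots,n$; denote by $u^{n}(t,\cdot)$ the piecewise-constant-in-time interpolation that equals $u^{n}_{k}$ on $[k\lambda_{n},(k+1)\lambda_{n})$. By construction each $u^{n}_{k}$ satisfies, in the sense of distributions in $x$,
\begin{displaymath}
  u^{n}_{k}-u^{n}_{k-1}+\lambda_{n}\bigl[f(x,u^{n}_{k})-\varepsilon u^{n}_{k,x}\bigr]_{x}=0.
\end{displaymath}
Pair this identity against a tensor-product test function $\phi(x)\psi(k\lambda_{n})$ with $\phi\in\Cc{2}\left(\mathbb{R},\mathbb{R}\right)$ and $\psi\in\Cc{1}\left([0,T),\mathbb{R}\right)$, integrate by parts twice in $x$ so that both spatial derivatives land on $\phi$, and perform an Abel summation in $k$ so that the discrete time difference lands on $\psi$. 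The result is a Riemann-sum version of the weak formulation of~\eqref{eq:vsc}.

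The decisive step, which I expect to be the main obstacle, is the passage to the limit $n\to\infty$ in the nonlinear flux term. Theorem~\ref{th:CL} yields $\sup_{t\in[0,T]}\|u^{n}(t,\cdot)-u(t,\cdot)\|_{\L1\left(\mathbb{R},\mathbb{R}\right)}\to 0$, and the Lipschitz hypothesis \textbf{f0)} ii) then gives the pointwise bound $\left|f(x,u^{n}_{k})-f(x,u(k\lambda_{n},x))\right|\le L\,\left|u^{n}_{k}-u(k\lambda_{n},x)\right|$, so the flux converges in $L^{1}$ uniformly in the time step as well. Crucially, the double integration by parts has already moved both derivatives off $u^{n}_{k}$ onto the smooth test function $\phi$, so only the $L^{1}$ convergence of $u^{n}_{k}$ itself is required. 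Passing to the limit in all three pieces and then approximating an arbitrary $\Phi(t,x)\in\Cc{2}\left([0,T)\times\mathbb{R},\mathbb{R}\right)$ by linear combinations of tensor products identifies $u(t,x)$ as the distributional weak solution of~\eqref{eq:vsc}, completing the proof.
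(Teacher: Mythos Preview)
Your proposal is correct and follows essentially the same route as the paper: invoke Theorem~\ref{lem:has_sol0} (and Corollary~\ref{cor:contraction_eq}) to feed into Theorem~\ref{th:CL}, then pass to the limit in the discrete resolvent equation after integrating by parts against a test function. The paper's proof is terser---it simply cites \cite[Theorem~4.2]{Miyadera} for the convergence of the piecewise-constant approximations and leaves the limit passage in the weak formulation as a one-line remark---whereas you spell out the Abel summation, the tensor-product reduction, and the use of the Lipschitz bound \textbf{f0)}\,ii) for the flux term, but the underlying argument is the same.
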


\begin{proof}
  Theorem~\ref{lem:has_sol0} guarantees that $A^{\varepsilon}$
  satisfies the hypotheses of Theorem~\ref{th:CL}. Therefore it
  generates a continuous semigroup
  $S_{t}^{\varepsilon}:
  \overline{\mathcal{D}\left(A^{\varepsilon}\right)}
  \to\overline{\mathcal{D}\left(A^{\varepsilon}\right)}$
  of contractions with
  $\overline{\mathcal{D}\left(A^{\varepsilon}\right)}=
  \L1\left(\mathbb{R},\mathbb{R}\right)$.
  For any $\bar u\in\L1\left(\mathbb{R},\mathbb{R}\right)$,
  the trajectory of the semigroup
  $u(t,x)=\left(S^{\varepsilon}_{t}\bar u\right)(x)$
  belongs to
  $\C0\left([0,+\infty),\L1\left(\mathbb{R},\mathbb{R}\right)\right)$.
  The trajectory $u$ can be obtained (see~\cite[Theorem~4.2]{Miyadera})
  as the limit in $\L1$ of approximations
  \begin{displaymath}
    u(t) = \lim_{\lambda\to 0}u_{\lambda}(t),\qquad 
    u_{\lambda}(t) =
    \left(J_{\lambda}^{\varepsilon}\right)^{\left[\frac{t}{\lambda}\right]}\bar u.
  \end{displaymath}
  
  By the definition of the resolvent
  $J_{\lambda}^{\varepsilon}$, 
  the approximations $u_{\lambda}(t)$ solve
  \begin{displaymath}
    \frac{u_{\lambda}(t,x)-u_{\lambda}\left(t-\lambda,x\right)}{\lambda}
    + \left[f(x,u_{\lambda}(t,x))-
      \varepsilon u_{\lambda,x}(t,x)\right]_{x}=0, \quad t\ge \lambda.
  \end{displaymath}
  We multiply this equation by a test function with compact
  support in $\left]0,+\infty\right[\times\mathbb{R}$, 
  and perform integrations by parts. 
  Taking the limit  $\lambda\to 0$, one shows that $u$ is a weak solution to
  the parabolic problem~\eqref{eq:vsc}.
\end{proof}

\section{The vanishing viscosity limit for the Backward Euler operator}
\setcounter{equation}{0}
\label{sec:vv}


In this section we study the
vanishing viscosity limit $\varepsilon\to 0$ in~\eqref{eq:vsc}, 
where we assume the hypotheses  \textbf{f1)}  on the flux $f$. 
Under \textbf{f1)}, the region $[0,1]$ is invariant for~\eqref{eq:res_eq}. 
We introduce the domain
\begin{equation}
  \label{eq:domain_def}
  D~\doteq~\left\{w\in \L1(\mathbb{R},\mathbb{R}):\ 0\le
    w\le 1\right\}.
\end{equation}
If the source term $w$ in \eqref{eq:res_eq} is in $D$, then 
$\underline u(x)=0$ and $ \overline u(x)=1$ are respectively
a lower and an upper solution to
\eqref{eq:res_eq}. 
An application of Theorem~\ref{thm:max_princ}
shows that $J_{\lambda}^{\varepsilon}w\in D$.

Hypothesis \textbf{f1)} implies additional regularity on the
solutions to~\eqref{eq:res_eq}.

\begin{lemma}
  \label{lem:equivalence}
  Suppose $f(x,\omega)$ satisfies \textbf{f1)}.
  If $w\in\Lloc1(\mathbb{R},\mathbb{R})$, then a function
  $u\in\Lloc1(\mathbb{R},\mathbb{R})$ is {\color{black} a weak solution} 
  to~\eqref{eq:res_eq} if and only if the following three conditions are
  satisfied:
  \begin{enumerate}[(i)]
  \item
    \label{item:1}
    $u\in \Wloc21 \left(\mathbb{R}\setminus\left\{0\right\},\mathbb{R}\right)
    \cap \Wloc11 \left(\mathbb{R},\mathbb{R}\right)$;
  \item
    \label{item:2}
    in $\mathbb{R}\setminus\left\{0\right\}$, $u$ is a weak (Sobolev) solution to
    \eqref{eq:res_eq};
  \item
    \label{item:3}
    the two limits $\lim_{x\to 0^{\pm}}u_{x}(x)=u_{x}(0\pm)$ exist and
    they satisfy
    \begin{equation}
      \label{eq:uxin0}
      f_{r}\left(u(0)\right) - f_{l}\left(u(0)\right)=\varepsilon
      \left(u_{x}(0+)-u_{x}(0-)\right).
    \end{equation}
  \end{enumerate}
  Moreover, we have
  \begin{equation}
    \label{eq:f-epsu_x}
    f(x,u) - \varepsilon
  u_{x}\in\Wloc11\left(\mathbb{R},\mathbb{R}\right).
  \end{equation}
\end{lemma}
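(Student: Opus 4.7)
The plan is to exploit that under hypothesis \textbf{f1)} the flux is smooth in $u$ and only jumps in $x$ at the single point $x=0$. This decouples the problem into classical interior regularity on each half line plus a single transmission condition at the origin, which is exactly the content of~(\ref{item:1})--(\ref{item:3}). I would prove the two implications separately.

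For the forward direction I would run an elliptic bootstrap. The distributional formulation gives $\left[f(x,u)-\varepsilon u_x\right]_x=(w-u)/\lambda\in\Lloc1$. Using $f_l(0)=f_r(0)=0$ and the Lipschitz bound from~\textbf{f0)}, one has $f(x,u)\in\Lloc1$, so $f(x,u)-\varepsilon u_x$ is a distribution whose derivative lies in $\Lloc1$; hence it is itself a $\Wloc11(\mathbb{R})$ function, which is~\eqref{eq:f-epsu_x}. Solving algebraically for $u_x$ then shows $u_x\in\Lloc1$ and $u\in\Wloc11(\mathbb{R})$. Away from $x=0$ the chain rule applies to $f_l(u)$ or $f_r(u)$, so $f(x,u)\in\Wloc11(\mathbb{R}\setminus\{0\})$, and rearranging once more places $u\in\Wloc21(\mathbb{R}\setminus\{0\})$. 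This gives~(\ref{item:1}); restricting the distributional identity to $\mathbb{R}\setminus\{0\}$ gives~(\ref{item:2}).

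For~(\ref{item:3}) the crucial input is that the $\Wloc11(\mathbb{R})$ function $f(x,u)-\varepsilon u_x$ has a continuous representative across $x=0$. Since $u$ is continuous at $0$ (one-dimensional Sobolev embedding) and $f$ jumps there, I would write $\varepsilon u_x=f(x,u)-\bigl[f(x,u)-\varepsilon u_x\bigr]$ on each side: both terms on the right extend continuously to $0$ from the corresponding side, producing the one-sided limits $u_x(0\pm)$. The continuity of $f(x,u)-\varepsilon u_x$ at $0$ then reads $f_l(u(0))-\varepsilon u_x(0-)=f_r(u(0))-\varepsilon u_x(0+)$, which is precisely~\eqref{eq:uxin0}.

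For the reverse direction, take any $\phi\in\Cc2$, split $\int_{\mathbb{R}}$ from Definition~\ref{def:weaksolution} as $\int_{-\infty}^0+\int_0^\infty$, and integrate by parts on each half line (allowed by~(\ref{item:1})). By~(\ref{item:2}) all interior contributions cancel, while the boundary contributions at $x=0$ collapse to $\bigl[f_r(u(0))-f_l(u(0))-\varepsilon\left(u_x(0+)-u_x(0-)\right)\bigr]\phi(0)$, since the $\varepsilon u(0)\phi'(0)$ pieces from the two sides cancel by continuity of $u$. Invoking~(\ref{item:3}) kills this boundary term, and the weak identity holds. The main obstacle, in my view, lies in the forward bootstrap: starting from only $u\in\Lloc1$ and a purely distributional equation, one has to argue successively that $f(x,u)$ is locally integrable, that $u_x$ is a genuine function, and---most delicately---that the one-sided limits $u_x(0\pm)$ exist so that~\eqref{eq:uxin0} is even meaningful. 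This last point cannot be deduced from $u\in\Wloc21(\mathbb{R}\setminus\{0\})$ alone; it requires the global regularity~\eqref{eq:f-epsu_x}, and this is exactly where the piecewise structure of~\textbf{f1)} is essential.
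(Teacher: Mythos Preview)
Your proposal is correct and follows essentially the same route as the paper: the forward direction is the identical bootstrap (first $f(x,u)-\varepsilon u_x\in\Wloc11(\mathbb{R})$, then $u\in\Wloc11(\mathbb{R})$, then $u\in\Wloc21(\mathbb{R}\setminus\{0\})$, then the jump condition from continuity of $f(x,u)-\varepsilon u_x$ at $0$), and the reverse direction is the same observation that~(\ref{item:3}) makes $f(x,u)-\varepsilon u_x$ continuous across $0$, which you phrase via an explicit integration by parts while the paper states it more tersely.
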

\begin{proof}
  Suppose first that $u\in\Lloc1\left(\mathbb{R},\mathbb{R}\right)$
  is a weak solution to
  \eqref{eq:res_eq} in $\mathbb{R}$. Then it satisfies
  \begin{equation}
    \label{eq:firstinclusion}
    \lambda\left[f(x,u) - \varepsilon u_{x}\right]_{x} = w - u
    \in\Lloc1\left(\mathbb{R},\mathbb{R}\right)
    \; \Rightarrow f(x,u) - \varepsilon
  u_{x}\in\Wloc11\left(\mathbb{R},\mathbb{R}\right)
  \end{equation}
  proving \eqref{eq:f-epsu_x}.
  It further shows
  that $f(x,u) - \varepsilon
  u_{x}$ is continuous in $\mathbb{R}$.
  Since
    $f(x,u)\in\Lloc1\left(\mathbb{R},\mathbb{R}\right)$,
    \eqref{eq:firstinclusion} also implies
    $u_{x}\in\Lloc1\left(\mathbb{R},\mathbb{R}\right)$ and consequently
    $u\in\Wloc11\left(\mathbb{R},\mathbb{R}\right)$. 
    Therefore both
    $u$ and $f(x,u) - \varepsilon
    u_{x}$ are continuous in $\mathbb{R}$, and we have
  \begin{displaymath}
    \lim_{x\to 0-}\left[f(x,u) - \varepsilon
      u_{x}\right]=f_{l}\left(u(0)\right) - \varepsilon
      u_{x}(0-)=
    \lim_{x\to 0+}\left[f(x,u) - \varepsilon
      u_{x}\right]=f_{r}\left(u(0)\right) - \varepsilon
      u_{x}(0+), 
    \end{displaymath}
    concluding \eqref{item:3}. 
    
    Consider now the domain $\left]-\infty,0\right[$, where $f(x,u)=f_{l}(u)\in
    \Wloc11\left(\left]-\infty,0\right[,\mathbb{R}\right)$. 
     Then \eqref{eq:firstinclusion} imply $u_{x}\in
    \Wloc11\left(\left]-\infty,0\right[,\mathbb{R}\right)$ and
    hence $u\in
    \Wloc21\left(\left]-\infty,0\right[,\mathbb{R}\right)$. The same
    argument holds in the domain $\left]0,+\infty\right[$, proving
    \eqref{item:1}
    and \eqref{item:2}.

  Suppose now that $u\in\Lloc1\left(\mathbb{R},\mathbb{R}\right)$
  satisfies \eqref{item:1}, \eqref{item:2}, \eqref{item:3}.
  In
  $\mathbb{R}\setminus \left\{0\right\}$,
  equation \eqref{eq:res_eq} is equivalent to
  $\left[f(x,u) - \varepsilon u_{x}\right]_{x} = w - u$  and
  \eqref{item:3} implies that $f(x,u) - \varepsilon u_{x}$ is
    continuous at $x=0$.  Therefore $u$ is a weak solution to
    \eqref{eq:res_eq} on $\mathbb{R}$.
\end{proof}

\begin{corollary}
  \label{cor:smoothness}
    For $k\in\mathbb{N}$, if $w\in\C{k}(\mathbb{R},\mathbb{R})$,
    $f_{l},\;f_{r}\in\C{k+1}(\mathbb{R},\mathbb{R})$, and
  $u\in\Lloc1(\mathbb{R},\mathbb{R})$ is a weak solution to
  \eqref{eq:res_eq},
   then $u\in\C{k+2}\left(\left]-\infty,0\right[\cup\left]0,+\infty\right[,\mathbb{R}\right)$.
\end{corollary}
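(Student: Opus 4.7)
The plan is a standard elliptic bootstrap, carried out separately on each of the two half-lines $(-\infty,0)$ and $(0,+\infty)$, starting from the Sobolev regularity already provided by Lemma~\ref{lem:equivalence}.

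First I would invoke Lemma~\ref{lem:equivalence}\eqref{item:1}: a weak solution $u$ of~\eqref{eq:res_eq} automatically lies in $\W{2}{1}_{loc}(\mathbb{R}\setminus\{0\})$. In one space dimension the Sobolev embedding gives $\W{2}{1}_{loc}\hookrightarrow \C{1}_{loc}$, so already $u\in\C{1}(]-\infty,0[\cup]0,+\infty[)$. On each half-line the flux reduces to a smooth function of $u$ alone ($f_{l}$ on the left, $f_{r}$ on the right), so on, say, $]-\infty,0[$ the resolvent equation can be rewritten pointwise as
\begin{equation*}
  \varepsilon\,u_{xx}(x)~=~\frac{u(x)-w(x)}{\lambda}+f_{l}'\bigl(u(x)\bigr)\,u_{x}(x),
\end{equation*}
and analogously on $]0,+\infty[$ with $f_{r}$.

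The bootstrap is then an induction on the regularity index. Suppose that for some integer $m$ with $2\le m\le k+2$ we have already shown $u\in\C{m-1}$ on each half-line. Because $f_{l}\in\C{k+1}$, the composition $f_{l}'(u)$ lies in $\C{\min(k,m-1)}$, and $u_{x}\in\C{m-2}$; together with $w\in\C{k}$, the right-hand side of the displayed equation is of class $\C{\min(m-2,k)}$. Hence $u_{xx}\in\C{\min(m-2,k)}$, so $u\in\C{\min(m,k+2)}$. Starting from the base case $m=2$ (i.e.~$u\in\C{1}$, which we already have, then the equation directly gives $u_{xx}$ continuous, hence $u\in\C{2}$), iterating this step raises the regularity by one each time until the $k$ in $\min(m-2,k)$ becomes binding, at which point the argument terminates exactly at $u\in\C{k+2}$. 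The same reasoning applied with $f_{r}$ covers the right half-line.

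There is no real obstacle here: the only bookkeeping point is to track the two ``regularity budgets'' (that of $w$ and that of $f_{l},f_{r}$) and check that the loss of one derivative in the flux term $f_{l}'(u)u_{x}$ is exactly compensated by the gain of two derivatives coming from $\varepsilon u_{xx}$, so each step of the induction strictly increases the $\C{m}$ index until we hit the prescribed ceiling $k+2$. Note that the argument gives no information at $x=0$, which is consistent with the statement, since in general the interface condition~\eqref{eq:uxin0} only forces a jump of $u_{x}$ compatible with $f_{r}(u(0))-f_{l}(u(0))$.
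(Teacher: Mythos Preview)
Your argument is correct and is exactly the same bootstrap the paper uses: rewrite the equation on each half-line as $\varepsilon u_{xx}=\frac{u-w}{\lambda}+f'_{l}(u)u_{x}$ (resp.~$f'_{r}$), start from the $\Wloc21$ regularity of Lemma~\ref{lem:equivalence}\eqref{item:1}, and induct. The paper states this in two lines; you have simply written out the induction with the regularity bookkeeping made explicit.
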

\begin{proof}
  In $\left]-\infty,0\right[$,
  we have that 
  $u_{xx}=\frac{1}{\varepsilon}\left(u + f_{l}'\left(u\right)u_{x}
    -w\right)$ holds.
 This relation, starting with the initial regularity
 given by Lemma~\ref{lem:equivalence} item \eqref{item:1}, by
 induction proves the result. The same holds in $\left]0,+\infty\right[$.  
\end{proof}

The next Lemma shows that the total variation of $J_{\lambda}^{\varepsilon}w$
is uniformly bounded with respect to the parameter $\varepsilon$.

\begin{lemma}
  \label{lem:unif_tv}
  Under the hypothesis \textbf{f1)}, the map
  $J_{\lambda}^{\varepsilon}$ defined in Theorem~\ref{lem:has_sol0}
  satisfies
  \begin{equation}
    \label{eq:unif_tv}
    \tv \left\{J_{\lambda}^{\varepsilon}w\right\}\le 2
    + \tv \left\{w\right\}, \quad \text { for all
    } w\in D.
  \end{equation}
\end{lemma}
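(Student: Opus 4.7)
The plan is to derive the bound by analysing the linear equation satisfied by $v = u_x$ on each half-line separately, then exploiting the maximum principle bound $0 \le u \le 1$ to control the boundary contribution at the interface $x = 0$. First I would reduce to smooth data via mollification: approximate $w \in D$ by a sequence $w_n \in \C{\infty} \cap D$ with $\tv\left\{w_n\right\} \le \tv\left\{w\right\}$ and $w_n \to w$ in $\L1$; the $\L1$-contraction of $J^\varepsilon_\lambda$ from Theorem~\ref{lem:has_sol0} together with lower semicontinuity of the total variation then promotes any bound $\tv\{u_n\} \le \tv\{w_n\} + 2$ for the smooth approximants to the general case. For smooth $w$ with $w_x \in \L1$, Corollary~\ref{cor:smoothness} guarantees that $u = J^\varepsilon_\lambda w$ is of class $\C{3}$ on each of $(-\infty,0)$ and $(0,+\infty)$, Lemma~\ref{lem:equivalence} provides global $\W{1}{1}$ regularity, and the convolution representation $u = \Lambda^w_\lambda(u)$ from the proof of Theorem~\ref{lem:has_sol0}, combined with the exponential decay of the kernel $H$, yields pointwise decay of $u, u_x, u_{xx}$ at $\pm\infty$. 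Differentiating the resolvent equation on each half-line produces the linear equation
\[
  v + \lambda\left(f_{l/r}'(u)\,v\right)_x - \lambda\varepsilon\, v_{xx} = w_x.
\]

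Next I would carry out a Kato-type $\L1$ estimate. Multiplying this equation by $\varphi_\delta'(v)$, with $\varphi_\delta(s) = \sqrt{s^2+\delta^2}-\delta$ a smooth approximation of $\left|\cdot\right|$, and integrating on $(0,+\infty)$, I rewrite the advective contribution as $f_{l/r}''(u)v^2\varphi_\delta'(v) + f_{l/r}'(u)(\varphi_\delta(v))_x$ and integrate the second piece once more by parts. The $f''$ contributions combine into an integrand proportional to $v^2\varphi_\delta'(v)-v\varphi_\delta(v)$ which tends to zero as $\delta\to 0$; the viscous term produces a non-negative Kato remainder $\lambda\varepsilon\int v_x^2\varphi_\delta''(v)\,dx$ which I discard; and the boundary contribution at $+\infty$ vanishes by the decay of $u_x, u_{xx}$. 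Passing to $\delta\to 0$ yields the one-sided inequality
\[
  \int_0^{+\infty}\left|u_x\right|\,dx - \lambda f_r'(u(0))\left|u_x(0^+)\right| + \lambda\varepsilon\, u_{xx}(0^+)\sign\left(u_x(0^+)\right) \le \int_0^{+\infty}\left|w_x\right|\,dx,
\]
together with the mirror inequality on $(-\infty,0)$ obtained by replacing $f_r$ by $f_l$ and $0^+$ by $0^-$, with signs flipped accordingly.

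To close the argument I would eliminate the second derivatives at the interface by evaluating the resolvent equation itself at $x = 0^\pm$,
\[
  \lambda\varepsilon\, u_{xx}(0^\pm) = u(0) - w(0^\pm) + \lambda f_{l/r}'(u(0))\,u_x(0^\pm).
\]
The $\lambda f'\left|u_x\right|$ contributions then cancel exactly on each side, leaving the clean bounds
\[
  \int_0^{+\infty}\left|u_x\right|\,dx \le \int_0^{+\infty}\left|w_x\right|\,dx + \left(w(0^+)-u(0)\right)\sign\left(u_x(0^+)\right)
\]
and the symmetric bound with $\left(u(0)-w(0^-)\right)\sign\left(u_x(0^-)\right)$ on the other half-line. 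Since $u(0), w(0^\pm)\in[0,1]$ by the maximum principle (Theorem~\ref{thm:max_princ}) and the assumption $w\in D$, each extra boundary term has absolute value at most $1$; summing the two one-sided estimates produces $\tv\{u\} \le \tv\{w\}+2$.

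The chief technical obstacle is the rigorous justification of the boundary terms at $\pm\infty$ arising in the integration by parts: one must verify that $u, u_x, u_{xx}$ all decay to $0$ so that the infinite-end traces genuinely vanish. This can be handled directly from the convolution representation in Theorem~\ref{lem:has_sol0}, which expresses $u$ (and, after differentiation, $u_x$ and $u_{xx}$) as an exponentially weighted integral of $\L1$ data, thereby forcing decay at infinity; alternatively, one may first integrate on $(-R,R)$ and extract a subsequence $R_k\to\infty$ along which the boundary traces vanish, exploiting $u\in\L1$.
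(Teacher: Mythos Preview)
Your argument is correct but follows a genuinely different route from the paper's. The paper gives a short, elementary proof based on a pointwise maximum principle: for smooth $w$, Corollary~\ref{cor:smoothness} makes $u$ smooth away from the interface, and at any local extremum $\bar x\neq 0$ of $u$ one has $u_x(\bar x)=0$, $\pm u_{xx}(\bar x)\le 0$, whence the resolvent equation forces $u(\bar x)\le w(\bar x)$ at maxima and $u(\bar x)\ge w(\bar x)$ at minima. Choosing sample points $x_0<\dots<x_N$ that are alternating local extrema on each side of $x=0$, one gets $|u(x_i)-u(x_{i-1})|\le |w(x_i)-w(x_{i-1})|$ for consecutive pairs not straddling the interface, and simply bounds the two differences across $x=0$ by $1$ using $0\le u\le 1$; density and lower semicontinuity finish as you do. Your approach instead differentiates the equation, runs a Kato $\L1$ estimate for $v=u_x$ on each half-line, and eliminates the $u_{xx}(0^\pm)$ boundary traces by plugging the resolvent equation back in, so that the sole residual interface contribution is $(w(0^\pm)-u(0))\sign(u_x(0^\pm))$, bounded by $1$ on each side. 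The paper's proof is shorter and avoids all integration-by-parts bookkeeping; yours is heavier but more systematic and would adapt to other $\L p$ bounds on $u_x$. One small remark: for the vanishing of the $f''$ remainder $\int f''(u)\bigl[v^2\varphi_\delta'(v)-v\varphi_\delta(v)\bigr]\,dx$ you implicitly use $u_x\in\L1$, which is what you are proving---it is cleanest to first carry out the estimate on $(0,R)$, pass $\delta\to 0$ there (where $u_x$ is bounded), and then send $R\to\infty$ along a subsequence with $u(R_k)\to 0$; your own substitution trick applied at $x=R$ shows the boundary term there equals $-u(R)\sign(u_x(R))$, so this works without needing derivative decay at infinity.
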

\begin{proof}
  Consider first $w\in\Cc\infty\left(\mathbb{R},\mathbb{R}\right)$ and
  define $u=J_{\lambda}^{\varepsilon}w$. Lemma~\ref{lem:equivalence}
  and Corollary~\ref{cor:smoothness}
  imply that $u$ is smooth in
  $\mathbb{R}\setminus\{0\}$ and continuous in
  $\mathbb{R}$. 
  We claim the following: 
  \begin{itemize}
  \item If $\bar x\not=0$ is a point of local maximum for $u$, then 
  $u(\bar x) \le w(\bar x)$.
  \item If $\hat x \not =0$ is a point of local minimum for $u$, then $u(\hat x) \ge w(\hat x)$.
  \end{itemize} 
  Indeed, consider a local max $\bar x > 0$ (the case $\bar x < 0$ being completely similar).
  We have
  \[
    u_x(\bar x)=0, \quad u_{xx} (\bar x) \le 0,
  \]
  so 
  \[
  w(\bar x) - u(\bar x) = \lambda 
  \left[ (f_r )'(u(\bar x)) u_x(\bar x) - \varepsilon u_{xx}(\bar x)\right]
  \ge 0.
  \]

  Fix $\gamma<\tv\left\{u\right\}$ and points $x_{0}<x_{1}< \ldots
  <x_{J-1}<0<x_{J} <\ldots <x_{N}$ such that
  \begin{displaymath}
    \gamma <
    \sum_{i=1}^{J-1}\left|u\left(x_{i}\right)-u\left(x_{i-1}\right)\right|
    +\left|u(0)-u\left(x_{J-1}\right)\right|
    +\left|u(0)-u\left(x_{J}\right)\right|
    +\sum_{i=J+1}^{N}\left|u\left(x_{i}\right)-u\left(x_{i-1}\right)\right|.
  \end{displaymath}
  It is not restrictive to assume that
  $w\left(x_{0}\right)=w\left(x_{N}\right)=0$, and that the points $x_{i}$, for
  $1\le i \le J-1$, are alternatively points of local maximum and minimum
  for $u$
  beginning with a maximum at $x_{1}$ while, for $J\le i\le N-1$, 
  they are alternatively point of local maximum and minimum
  beginning with a maximum at $x_{N-1}$.
  Therefore we have
  $\left|u\left(x_{i}\right)-u\left(x_{i-1}\right)\right|\le\left|w\left(x_{i}\right)-w\left(x_{i-1}\right)\right|$
  for $0\le i\le N$, with $i\not= J$ which implies
  \begin{displaymath}
    \gamma <
    \sum_{i=1}^{J-1}\left|w\left(x_{i}\right)-w\left(x_{i-1}\right)\right|
    +1
    +1
    +\sum_{i=J+1}^{N}\left|w\left(x_{i}\right)-w\left(x_{i-1}\right)\right|\le
    \tv\left\{w\right\}+2.
  \end{displaymath}
  This proves the assertion because of the arbitrariness of $\gamma<\tv\left\{u\right\}$.

    Finally, given any $w\in D$ there exists a sequence
    $w_{\nu}\in\Cc\infty\left(\mathbb{R},
      \mathbb{R}\right)\cap D$ converging
    to $w$ in $\L1\left(\mathbb{R},\mathbb{R}\right)$ such that
    $\tv\left\{w_{\nu}\right\}
    \le \tv\left\{w\right\}$. The continuity of $J_{\lambda}^{\varepsilon}$
    and the lower semicontinuity of the total
    variation imply
    \begin{equation}
      \begin{split}
        \tv\left\{J_{\lambda}^{\varepsilon}w\right\} &\le
        \liminf_{\nu\to+\infty}
        \tv\left\{J_{\lambda}^{\varepsilon}w_{\nu}\right\} \le 2 +
        \liminf_{\nu\to+\infty}\tv\left\{w_{\nu}\right\} \\
        &\le 2
        +\tv\left\{w\right\}.
      \end{split}
    \end{equation}
    \end{proof}

The previous Lemma yields the compactness of the family
$\left\{J_{\lambda}^{\varepsilon}w\right\}_{\varepsilon>0}$
whenever $w$ has bounded total variation.
The limit is unique due to the following characterization.

\begin{theorem}
  \label{th:characterization}
  Given $w\in D\cap\bv\left(\mathbb{R},\mathbb{R}\right)$, from
  any sequence $\varepsilon_{\nu}\to 0$, we can extract a subsequence
  $\varepsilon_{\nu_{j}}$
  such that
  $J_{\lambda}^{\varepsilon_{\nu_{j}}}w$ converges pointwise to a function
  $u\in\bv\left(\mathbb{R},\mathbb{R}\right)$ which satisfies
  \begin{equation}
    \label{eq:limiteq}
    u + \lambda f(x,u)_{x}=w.
  \end{equation}
  Furthermore the
  following entropy inequality holds in the space of
  distributions
  \begin{equation}
    \label{eq:entropy_ineq}
    \lambda\delta_{0}\int_{0}^{u(0)}\eta''(\omega)\left[f_{r}(\omega)-f_{l}(\omega)\right]\;
    d\omega + \lambda q\left(x,u\right)_{x}
    +\eta'\left(u\right)\left[u-w\right]\le 0,
  \end{equation}
  where $\delta_{0}$ is the unit mass at the origin, $\eta$ is
  any smooth convex function, $q$ is defined by
  \begin{displaymath}
    q(x,\omega)=\int_{0}^{\omega}\eta'\left(\bar \omega\right)f_{\bar \omega}\left(x,\bar \omega\right)\;
    d\bar \omega.
  \end{displaymath}

  Moreover, if $u$ is discontinuous at $x_{o}$  with
  $u^{\pm}=u\left(x_{o}\pm\right)$, 
  we must have
    \begin{equation}
    \label{eq:conserv_cond}
    f\left(x_{o}-,u^{-}\right)=
    f\left(x_{o}+,u^{+}\right)\dot=\bar f,
  \end{equation}
  and  the following entropy conditions:
  \begin{enumerate}
  \item
    if $u^{-}<u^{+}$ and $x_{o}\not=0$ then
    \begin{displaymath}
      f\left(x_{o},k\right)\ge \bar f, \text{ for all } k\in\left[u^{-},u^{+}\right];
    \end{displaymath}
  \item
    if $u^{-}>u^{+}$ and $x_{o}\not=0$ then
    \begin{displaymath}
      f\left(x_{o},k\right)\le \bar f, \text{ for all } k\in\left[u^{+},u^{-}\right];
    \end{displaymath}
  \item
    if $u^{-}<u^{+}$ and $x_{o}=0$ then there exists
    $u^{*}\in\left[u^{-},u^{+}\right]$ such that
    \begin{displaymath}
      \begin{cases}
        f_{l}\left(k\right)\ge \bar f, \text{ for all }
        k\in\left[u^{-},u^{*}\right],\\
        f_{r}\left(k\right)\ge \bar f, \text{ for all }
        k\in\left[u^{*},u^{+}\right];
      \end{cases}
    \end{displaymath}
  \item
    if $u^{-}>u^{+}$ and $x_{o}=0$ then there exists
    $u^{*}\in\left[u^{+},u^{-}\right]$ such that
    \begin{displaymath}
      \begin{cases}
        f_{r}\left(k\right)\le \bar f, \text{ for all }
        k\in\left[u^{+},u^{*}\right],\\
        f_{l}\left(k\right)\le \bar f, \text{ for all }
        k\in\left[u^{*},u^{-}\right].
      \end{cases}
    \end{displaymath}
  \end{enumerate}
\end{theorem}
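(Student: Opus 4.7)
The plan is to combine the uniform BV estimate of Lemma~\ref{lem:unif_tv} with a Kruzhkov-style entropy manipulation performed directly on the viscous resolvent equation, and then to pass to the vanishing viscosity limit. First, the maximum principle (Theorem~\ref{thm:max_princ}) applied with $\underline{u}\equiv 0$, $\overline{u}\equiv 1$ confines $u^{\varepsilon}\doteq J_{\lambda}^{\varepsilon}w$ to $[0,1]$, while Lemma~\ref{lem:unif_tv} gives $\tv\{u^{\varepsilon}\}\le 2+\tv\{w\}$ uniformly in $\varepsilon$. Helly's compactness theorem then extracts a subsequence $\varepsilon_{\nu_j}$ along which $u^{\varepsilon_{\nu_j}}\to u$ pointwise, with $u\in\bv(\mathbb{R},\mathbb{R})$; a further diagonal extraction forces $u^{\varepsilon_{\nu_j}}(0)\to u(0)$ for some value in $[0,1]$ (which need not coincide with either one-sided limit of $u$ at $0$ when $u$ jumps there). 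Passing to the limit in the weak formulation of Definition~\ref{def:weaksolution}, the viscous contribution $\varepsilon_{\nu_j}\int u^{\varepsilon_{\nu_j}}\phi''\,dx$ vanishes by the $\L\infty$-bound, while the flux term converges by dominated convergence thanks to the Lipschitz estimate in \textbf{f0)}; this delivers $u+\lambda f(x,u)_x=w$ in the sense of distributions.

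To establish the entropy inequality, observe that on $\mathbb{R}\setminus\{0\}$ the flux is smooth in $x$ with $f_x\equiv 0$, so multiplying the resolvent equation by $\eta'(u^{\varepsilon})$ and using convexity of $\eta$ yields the classical pointwise inequality $\eta'(u^{\varepsilon})(u^{\varepsilon}-w)+\lambda q(x,u^{\varepsilon})_x-\lambda\varepsilon\bigl(\eta'(u^{\varepsilon})u^{\varepsilon}_x\bigr)_x\le 0$ there. Testing against $\phi\in\Cc\infty(\mathbb{R},\mathbb{R})$ with $\phi\ge 0$, and integrating by parts separately on $(-\infty,0)$ and $(0,+\infty)$, produces two boundary contributions at $x=0$: the jump of $q(x,u^{\varepsilon})$, namely $-\lambda\phi(0)\int_0^{u^{\varepsilon}(0)}\eta'(\omega)[f_r'(\omega)-f_l'(\omega)]\,d\omega$, and the jump of $\lambda\varepsilon\eta'(u^{\varepsilon})u^{\varepsilon}_x$, which by the trace identity~\eqref{eq:uxin0} equals $\lambda\phi(0)\eta'(u^{\varepsilon}(0))[f_r-f_l](u^{\varepsilon}(0))$ after the prefactor $\varepsilon$ cancels the $1/\varepsilon$ from~\eqref{eq:uxin0}. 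An integration by parts in $\omega$ combines these into the single contribution $\lambda\phi(0)\int_0^{u^{\varepsilon}(0)}\eta''(\omega)[f_r(\omega)-f_l(\omega)]\,d\omega$. Letting $\varepsilon_{\nu_j}\to 0$, the bulk viscous term $\lambda\varepsilon_{\nu_j}\int\phi_x\eta'(u^{\varepsilon_{\nu_j}})u^{\varepsilon_{\nu_j}}_x\,dx$ vanishes since $\eta'$ is bounded on $[0,1]$ and $\int|u^{\varepsilon}_x|\,dx=\tv\{u^{\varepsilon}\}$ is uniformly bounded, and the remaining terms pass by dominated convergence together with $u^{\varepsilon_{\nu_j}}(0)\to u(0)$; this yields~\eqref{eq:entropy_ineq}.

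For the jump conditions, the limit identity implies $\lambda f(x,u)_x=w-u\in\L\infty$, so $f(x,u)$ admits a continuous representative; evaluated at any discontinuity $x_o$ of $u$, and using that $f(\cdot,\omega)$ is itself continuous at $x_o\ne 0$, this immediately gives~\eqref{eq:conserv_cond}. The Oleinik-type conditions 1--4 follow by localizing~\eqref{eq:entropy_ineq} around the jump point and choosing $\eta$ to be a smooth convex approximation of the Kruzhkov entropy $\omega\mapsto|\omega-k|$. At $x_o\ne 0$ the $\delta_0$-term does not contribute and the standard sign-change analysis of the entropy flux across the jump produces conditions~1 and~2. At $x_o=0$ the $\delta_0$-term must be retained, and the value $u(0)$ (the limit of the viscous traces) plays the role of the intermediate state $u^*$, splitting~\eqref{eq:entropy_ineq} into two one-sided Oleinik-type constraints, one on $f_l$ between $u^-$ and $u^*$ and one on $f_r$ between $u^*$ and $u^+$ (and analogously in case~4).

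The main obstacle is the delicate distributional bookkeeping at the interface in the entropy step: the viscous prefactor $\varepsilon$ must cancel precisely against the $1/\varepsilon$ produced by~\eqref{eq:uxin0}, and the resulting jump must be converted, through an integration by parts in $\omega$, into the specific $\eta''(f_r-f_l)$ form demanded by~\eqref{eq:entropy_ineq}; any sign error here spoils the final inequality. A secondary difficulty is arranging the diagonal subsequence so that $u^{\varepsilon_{\nu_j}}(0)$ simultaneously yields the correct value of $u(0)$ in the $\delta_0$-coefficient of~\eqref{eq:entropy_ineq} and the intermediate state $u^*$ appearing in the boundary entropy conditions~3--4.
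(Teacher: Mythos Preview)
Your proposal is correct and follows essentially the same route as the paper: Helly compactness from the uniform BV bound, a Kruzhkov-type entropy manipulation on the viscous resolvent with careful bookkeeping of the interface contribution at $x=0$ (the paper packages this as the Dirac part of $\eta'(u^\varepsilon)f(x,u^\varepsilon)_x$ while letting $\varepsilon\,\eta(u^\varepsilon)_{xx}\to 0$ globally, whereas you integrate by parts on half-lines and invoke the trace identity~\eqref{eq:uxin0}---the two computations are algebraically equivalent), and then localization at jumps via smooth approximations of $|\omega-k|$. Two small imprecisions: \eqref{eq:uxin0} contains no ``$1/\varepsilon$''---the $\varepsilon$ in the viscous boundary term multiplies the jump of $u^\varepsilon_x$ directly into $f_r-f_l$; and the intermediate state $u^*$ in cases~3--4 is not literally $u(0)$ but its truncation to $[u^-,u^+]$ (respectively $[u^+,u^-]$), since nothing a priori forces the pointwise limit $u(0)$ to lie between the one-sided traces.
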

\begin{proof}
The proof takes a few steps.

\medskip

\noindent\textbf{Step 1.} 
  Define $u^{\varepsilon}=J_{\lambda}^{\varepsilon}w$. By
  Lemma~\ref{lem:unif_tv},  $\tv \{u^{\varepsilon}\}$ is bounded 
  uniformly in $\varepsilon$.
%
  Therefore there exists a subsequence $u^{\varepsilon_{\nu_{j}}}$
  which converges \emph{pointwise} to a function $u\in\bv\left(\mathbb{R},\mathbb{R}\right)$
  with $0\le u\le1$. To simplify the notation we denote $u^{\varepsilon}=u^{\varepsilon_{\nu_{j}}}$. 
  By definition of $J_{\lambda}^{\varepsilon}w$, $u^{\varepsilon}$ is a weak solution
  to~\eqref{eq:res_eq}.
  Passing to the limit as $\varepsilon\to 0$
  in~\eqref{eq:res_eq} we immediately obtain~\eqref{eq:limiteq}.
  
  \medskip
  
\noindent\textbf{Step 2.} 
  By Lemma~\ref{lem:equivalence}, given any smooth convex function
  $\eta\left(\xi\right)$, the composition
  $\eta\left(u^{\varepsilon}\right)$ is in
  $\Wloc11\left(\mathbb{R},\mathbb{R}\right)$  with
  $\eta\left(u^{\varepsilon}\right)_{x}=\eta'\left(u^{\varepsilon}\right)u^{\varepsilon}_{x}$. 
  Multiplying~\eqref{eq:res_eq} by the continuous function
  $\eta'\left(u^{\varepsilon}\right)$ we obtain
  \begin{equation}
    \label{eq:first_entropy}
      \eta'\left(u^{\varepsilon}\right)\lambda f\left(x,u^{\varepsilon}\right)_{x}- 
      \lambda\varepsilon \eta'\left(u^{\varepsilon}\right)u^{\varepsilon}_{xx} + \eta'\left(u^{\varepsilon}\right)\left[u^{\varepsilon}-w\right]=0.
    \end{equation}
    By Lemma~\ref{lem:equivalence},
    $u_{x}^{\varepsilon}\in\Wloc11\left(\mathbb{R}\setminus\left\{0\right\},\mathbb{R}\right)$ 
    with a possible discontinuity at $x=0$, 
   therefore $u_{x}^{\varepsilon}\in\bvloc\left(\mathbb{R},\mathbb{R}\right)$.
  Since $\eta'\left(u^{\varepsilon}\right)$ is locally Lipschitz
  we obtain by Leibniz
  rule (\cite[Proposition 3.2]{Ambrosiobv})
  \begin{displaymath}
    \left[\eta\left(u^{\varepsilon}\right)_{x}\right]_{x}=\left[\eta'\left(u^{\varepsilon}\right)u^{\varepsilon}_{x}\right]_{x}
    =\eta''\left(u^{\varepsilon}\right)\left(u^{\varepsilon}_{x}\right)^{2}
    +\eta'\left(u^{\varepsilon}\right)u^{\varepsilon}_{xx}.
  \end{displaymath}
  Using this equality, \eqref{eq:first_entropy} becomes
  \begin{displaymath}
      \eta'\left(u^{\varepsilon}\right)\lambda f\left(x,u^{\varepsilon}\right)_{x}-
      \lambda\varepsilon
      \eta\left(u^{\varepsilon}\right)_{xx} +
      \eta'\left(u^{\varepsilon}\right)\left[u^{\varepsilon}-w\right]=-\lambda\varepsilon
      \eta''\left(u^{\varepsilon}\right)\left(u^{\varepsilon}_{x}\right)^{2}
      \le 0.
  \end{displaymath}

  In the space of distribution,
  $\lambda\varepsilon\eta\left(u^{\varepsilon}\right)_{xx}\to 0$ and
  $\eta'\left(u^{\varepsilon}\right)\left[u^{\varepsilon}-w\right]\to
  \eta'\left(u\right)\left[u-w\right]$ as $\varepsilon\to 0$.
  It remains to show the weak convergence of the measure
  $\eta'\left(u^{\varepsilon}\right)f\left(x,u^{\varepsilon}\right)_{x}$. 
  
  We define the notations
  \begin{displaymath}
  q\left(x,\omega\right)=
    \begin{cases}
      q_{l}\left(\omega\right)&\text{ for }x\le 0,\\
      q_{r}\left(\omega\right)&\text{ for }x> 0,
    \end{cases}
      \end{displaymath}
      where
      \[
      q_{l}\left(\omega\right)=\int_{0}^{\omega}\eta'\left(\bar\omega\right)f_{l}'\left(\bar\omega\right)\;d\bar\omega,
      \qquad
      q_{r}\left(\omega\right)=\int_{0}^{\omega}\eta'\left(\bar\omega\right)f_{r}'\left(\bar\omega\right)\;d\bar\omega.
      \]
  
  Fix a test function $\varphi$. Observe that
  $\eta'\left(u^{\varepsilon}\right)f\left(x,u^{\varepsilon}\right)_{x}$
  has a Dirac mass at the origin.  We compute the duality product
  \begin{equation}
    \begin{split}
      \langle
      \eta' &\left(u^{\varepsilon}\right)  f\left(x,u^{\varepsilon}\right)_{x},\varphi\rangle\\
      &=\eta'\left(u^{\varepsilon}(0)\right)\left[f_{r}\left(u^{\varepsilon}\left(0\right)\right)
        -
        f_{l}\left(u^{\varepsilon}\left(0\right)\right)\right]\varphi(0)
     + \int_{-\infty}^{0}q_{l}\left(u^{\varepsilon}\right)_{x}\;
      \varphi\; dx +
      \int_{0}^{+\infty}q_{r}\left(u^{\varepsilon}\right)_{x}\;\varphi\;  dx\\
      &=
      \eta'\left(u^{\varepsilon}(0)\right)\left[f_{r}\left(u^{\varepsilon}\left(0\right)\right)
        -
        f_{l}\left(u^{\varepsilon}\left(0\right)\right)\right]\varphi(0)
      +\left[q_{l}\left(u^{\varepsilon}(0)\right)-q_{r}\left(u^{\varepsilon}(0)\right)\right]\varphi(0)\\
      &\qquad-\int_{-\infty}^{0}q_{l}\left(u^{\varepsilon}\right)
      \varphi_{x}\; dx -
      \int_{0}^{+\infty}q_{r}\left(u^{\varepsilon}\right)\varphi_{x}\;
      dx\\
      &=
      \Big\{\eta'\left(u^{\varepsilon}(0)\right)\left[f_{r}\left(u^{\varepsilon}\left(0\right)\right)
        -
        f_{l}\left(u^{\varepsilon}\left(0\right)\right)\right]+\left[q_{l}\left(u^{\varepsilon}(0)\right)-
        q_{r}\left(u^{\varepsilon}(0)\right)\right]\Big\}\varphi(0)\\
      &\qquad-\int_{\mathbb{R}}q\left(x,u^{\varepsilon}\right)\varphi_{x}\;
      dx.
    \end{split}
  \end{equation}
  Using   $f_{l}(0)=f_{r}(0)=0$ and integration by parts, we obtain
  \begin{equation}
    \begin{split}
      \eta'\left(u^{\varepsilon}(0)\right)&\left[f_{r}\left(u^{\varepsilon}\left(0\right)\right)
          -   f_{l}\left(u^{\varepsilon}\left(0\right)\right)\right]+
        \left[q_{l}\left(u^{\varepsilon}(0)\right)-q_{r}\left(u^{\varepsilon}(0)\right)\right]\\
      &=
      \eta'\left(u^{\varepsilon}(0)\right)\left[f_{r}\left(u^{\varepsilon}\left(0\right)\right)
        -   f_{l}\left(u^{\varepsilon}\left(0\right)\right)\right]+
      \int_{0}^{u^{\varepsilon}(0)}
        \eta'\left(\omega\right)\left[f_{l}'\left(\omega\right)-f_{r}'\left(\omega\right)\right]\;d\omega\\
      &= \int_{0}^{u^{\varepsilon}(0)}
        \eta''\left(\omega\right)\left[f_{r}\left(\omega\right)-f_{l}\left(\omega\right)\right]\;d\omega.
    \end{split}
  \end{equation}
This gives
  \begin{equation}
    \begin{split}
      \langle
      \eta'\left(u^{\varepsilon}\right)f\left(x,u^{\varepsilon}\right)_{x},\varphi\rangle
      &=\varphi\left(0\right)\int_{0}^{u^{\varepsilon}(0)}
      \eta''\left(\xi\right)\left[f_{r}\left(\xi\right)-f_{l}\left(\xi\right)\right]\;d\xi
      -\int_{\mathbb{R}}q\left(x,u^{\varepsilon}\right)\varphi_{x}\;
      dx\\
      &=\langle \delta_{0}\int_{0}^{u^{\varepsilon}(0)}
      \eta''\left(\xi\right)\left[f_{r}\left(\xi\right)-f_{l}\left(\xi\right)\right]\;d\xi
      + q\left(x,u^{\varepsilon}\right)_{x},\varphi\rangle.
    \end{split}
  \end{equation}
  Finally, since $u^{\varepsilon}$ converges pointwise to $u$ and is
  uniformly bounded, we have that the convergence
  \[\delta_{0}\int_{0}^{u^{\varepsilon}(0)}
      \eta''\left(\omega\right)\left[f_{r}\left(\omega\right)-f_{l}\left(\omega\right)\right]\;d\omega
      + q\left(x,u^{\varepsilon}\right)_{x}  \rightarrow
      \delta_{0}\int_{0}^{u(0)}
      \eta''\left(\omega\right)\left[f_{r}\left(\omega\right)-f_{l}\left(\omega\right)\right]\;d\omega
      + q\left(x,u\right)_{x}\]  
      holds in the space of the distributions, 
      completing the proof of~\eqref{eq:entropy_ineq}.

\medskip

\noindent\textbf{Step 3.} 
      The entropy conditions follow from the entropy
      inequality~\eqref{eq:entropy_ineq}. 
      Indeed, assume that $u$ has a jump at  $x_{o}$ 
      with      $u^{\pm}=u\left(x_{o}\pm\right)$. 
      Suppose $u^{-}<u^{+}$ while the other case being completely similar. 
      Since
      $\eta'\left(u\right)\left(u-w\right)$ is absolutely continuous
      with respect to the Lebesgue measure, computing the measure 
      of~\eqref{eq:entropy_ineq} at the singleton
      $\left\{x_{o}\right\}$ we obtain
      \begin{equation}
        \label{eq:jump_eq}
        \delta_{0}\left(\left\{x_{o}\right\}\right)\int_{0}^{u(0)}
        \eta''\left(\omega\right)\left[f_{r}\left(\omega\right)-f_{l}
          \left(\omega\right)\right]\;d\omega
      + q\left(x_{o}+,u^{+}\right)-q\left(x_{o}-,u^{-}\right)\le 0.
    \end{equation}
    
    For $k\in\left[0,1\right]$ and
    $i\in\mathbb{N}\setminus\left\{0\right\}$,
    we consider the following family of
    smooth convex functions
    $\eta_{k,i}$ and the corresponding fluxes $q_{k,i}$:
    \begin{displaymath}
      \eta_{k,i}\left(\omega\right)=\sqrt{\frac{1}{i}+\left(\omega-k\right)^{2}},\qquad
      q_{k,i}\left(x,\omega\right)=\int_{0}^{\omega}
      \eta'_{k,i}\left(\bar\omega\right)f_{\bar\omega}\left(x,\bar\omega\right)\; d\bar\omega.
    \end{displaymath}
    We have that, as $i\to+\infty$:
    \begin{equation}
      \begin{split}
      \eta_{k,i}\left(\omega\right)\to
      \left|\omega-k\right|&\qquad\text{ uniformly},\\
    \eta'_{k,i}\left(\omega\right)\to
    \sign(\omega-k)&\qquad\text{ pointwise},\\
    \eta''_{k,i}\left(\omega\right)\to
    2\delta_{k}&\qquad\text{ weakly$^*$ in the space of Radon measures},\\    
    q_{k,i}\left(x,\omega\right)\to
    \int_{0}^{\omega}\sign\left(\bar\omega-k\right)f_{\bar\omega}\left(x,\bar\omega\right)\;d\bar\omega
    &\qquad\text{ pointwise}.\\    
      \end{split}
    \end{equation}
    Here $\delta_{k}$ is the unit mass centered at $\omega=k$.
    
    We now substitute $\eta_{k,i}$ and $q_{k,i}$
    in~\eqref{eq:jump_eq} and take the limit as $i\to+\infty$. 
    We obtain, for any $k\not\in\left\{0,u(0)\right\}$:
      \begin{equation}
        \label{eq:jump_eq2}
        \begin{split}
          2\delta_{0}&\left(\left\{x_{o}\right\}\right)\chi_{[0,u(0)]}(k)
          \left[f_{r}\left(k\right)-f_{l}\left(k\right)\right]\\
          &+
          \int_{0}^{u^{+}}\sign\left(\omega-k\right)f_{\omega}\left(x_{o}+, \omega\right)\;d \omega
          -
          \int_{0}^{u^{-}}\sign\left(\omega-k\right)f_{\omega}\left(x_{o}-, \omega\right)\;d \omega
          \le 0.
        \end{split}
      \end{equation}
      Since the second and the third terms in the left hand side
      of~\eqref{eq:jump_eq2} are
      continuous with respect to $k$,
      it must hold for
      any $u(0),k\in[0,1]$.
      
      \medskip
      
\noindent\textbf{Step 4.}     Suppose $x_{o}>0$ (the case $x_{o}<0$ is completely similar).
    Then~\eqref{eq:jump_eq2} becomes (recall that we assume $u^{-}<u^{+}$)
    \begin{equation}
      \label{eq:xnotzero}
      \int_{u^{-}}^{u^{+}}\sign\left(\omega-k\right)f'_{r}\left(\omega\right)\;d\omega
      \le 0,\qquad \text{ for any } k\in\left[0,1\right].
    \end{equation}
    Evaluating~\eqref{eq:xnotzero} at $k=0$ gives 
            $f_{r}\left(u^{+}\right)-f_{r}\left(u^{-}\right)\le 0$, while
        at $k= 1$ it gives
        $f_{r}\left(u^{+}\right)-f_{r}\left(u^{-}\right)\ge 0$, 
        thus we conclude~\eqref{eq:conserv_cond}. Letting
        $k\in\left[u^{-},u^{+}\right]$,~\eqref{eq:xnotzero} becomes
        $\bar f\le f_{r}(k)$ which proves $1$.

        Finally we consider the case $x_{o}=0$ where
        $\delta_{0}\left(x_{o}\right)=1$. 
        Then, \eqref{eq:jump_eq2} becomes
      \begin{displaymath}
          2\chi_{[0,u(0)]}(k)
          \left[f_{r}\left(k\right)-f_{l}\left(k\right)\right]
          +
          \int_{0}^{u^{+}}\sign\left(\omega-k\right)f'_{r}\left( \omega\right)\;d \omega
          -
          \int_{0}^{u^{-}}\sign\left(\omega-k\right)f'_{l}\left( \omega\right)\;d \omega
          \le 0.
    \end{displaymath}
  %
    Setting  $k=0$ and $k=1$ in the above inequality  we obtain 
    \[f_{l}\left(u^{-}\right)=f_{r}\left(u^{+}\right)=\bar f,\]
    proving~\eqref{eq:conserv_cond}. 
    Then, with $k\in\left[u^{-},u^{+}\right]$ we
 get
 \begin{displaymath}
   \bar f \le 
          -\chi_{[0,u(0)]}(k)
          \left[f_{r}\left(k\right)-f_{l}\left(k\right)\right]
          +f_{r}\left(k\right)\qquad\text{ for any }k\in\left[u^{-},u^{+}\right].
    \end{displaymath}
    Letting
    \begin{displaymath}
    u^{*}=
 \begin{cases}
   u^{-} &\text{ if }u(0)\le u^{-},\\
   u(0) &\text{ if }u^{-}\le u(0)\le u^{+},\\
   u^{+} &\text{ if }u(0)\ge u^{+},\\   
 \end{cases}
\end{displaymath}
this proves 3.  This proof for 2.~and 4.~is completely similar.
\end{proof}

\bigskip

We now establish the uniqueness of the vanishing viscosity limit for 
backward Euler operator $J_{\lambda}^{\varepsilon}$.

\begin{theorem}
  \label{th:resolven_convergence}
  For any $w\in D$, $J_{\lambda}^{\varepsilon}w$ converges
  in $\L1\left(\mathbb{R},\mathbb{R}\right)$ to a unique limit
  $J_{\lambda} w\in D$ as $\varepsilon\to 0$, the map
  $J_{\lambda}:D\to D$ being a contraction.
\end{theorem}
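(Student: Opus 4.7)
The plan is to first prove convergence on the $\bv$-dense subset of $D$ via compactness, characterize every subsequential limit as the unique entropy solution of the resolvent equation, and extend to all of $D$ by density and the $\L1$ contraction of $J_\lambda^\varepsilon$. Fix $w\in D\cap \bv(\mathbb{R},\mathbb{R})$. By Lemma~\ref{lem:unif_tv} the family $\{J_\lambda^\varepsilon w\}_{\varepsilon>0}$ has total variation uniformly bounded by $\tv\{w\}+2$ and is uniformly bounded in $\L\infty$ by $1$; Helly's selection theorem then yields, along any sequence $\varepsilon_n\to 0$, a subsequence $\varepsilon_{n_k}$ along which $J_\lambda^{\varepsilon_{n_k}}w\to u$ pointwise a.e.\ and in $\Lloc1$, with $u\in D\cap\bv$. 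Tightness in $\L1(\mathbb{R},\mathbb{R})$, and hence genuine $\L1$ convergence, can be obtained by comparing $J_\lambda^\varepsilon w$ with $J_\lambda^\varepsilon(w\chi_{[-M,M]})$ via the contraction and conservation properties of Theorem~\ref{lem:has_sol0}, sending $M\to\infty$.

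By Theorem~\ref{th:characterization} every such limit $u$ is a distributional solution of $u+\lambda f(x,u)_x=w$, satisfies the entropy inequality~\eqref{eq:entropy_ineq}, and at each jump fulfills the Rankine--Hugoniot identity~\eqref{eq:conserv_cond} together with the interface admissibility conditions 1--4. The crucial step is to show that these conditions determine $u$ uniquely. I would prove this by a Kruzhkov doubling--of--variables argument at the level of the resolvent equation: for two candidates $u_1,u_2$ associated with sources $w_1,w_2\in D\cap\bv$, testing~\eqref{eq:entropy_ineq} against the Kruzhkov entropies $\eta_k(\omega)=|\omega-k|$, doubling the $x$ variable and passing to the diagonal, one expects the $\L1$ comparison
\[
\int_{\mathbb{R}} |u_1-u_2|\,dx \;\le\; \int_{\mathbb{R}} |w_1-w_2|\,dx.
\]
The delicate contribution comes from the Dirac mass at the origin in~\eqref{eq:entropy_ineq}: using the interface conditions 3--4 applied to both $u_1$ and $u_2$, together with~\eqref{eq:conserv_cond}, the combined trace contribution at $x=0$ is non-positive and can be absorbed in the comparison. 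Setting $w_1=w_2=w$ forces $u_1=u_2$, so the whole family $J_\lambda^\varepsilon w$ has a unique limit as $\varepsilon\to 0$; call it $J_\lambda w\in D$.

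For arbitrary $w\in D$, density of $D\cap\bv$ in $D$ together with the uniform $\L1$ contraction of $J_\lambda^\varepsilon$ (Theorem~\ref{lem:has_sol0}(iii)) yields the extension. Given $w_n\in D\cap\bv$ with $w_n\to w$ in $\L1$, passing $\varepsilon\to 0$ in
\[
\|J_\lambda^\varepsilon w_n - J_\lambda^\varepsilon w_m\|_{\L1}\;\le\;\|w_n-w_m\|_{\L1}
\]
shows that $(J_\lambda w_n)$ is Cauchy in $\L1$, so we set $J_\lambda w := \lim_n J_\lambda w_n$, well-defined independently of the approximating sequence by the same contraction. A standard three-$\varepsilon$ argument then gives $J_\lambda^\varepsilon w\to J_\lambda w$ in $\L1$ for every $w\in D$, and passing to the limit in the contraction of $J_\lambda^\varepsilon$ produces the contraction of $J_\lambda:D\to D$. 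The main obstacle is precisely the interface step of the uniqueness argument: showing that the Dirac term at $x=0$ in the doubled entropy inequality has the right sign by exploiting conditions 3--4 of Theorem~\ref{th:characterization}, so that the flux discontinuity does not destroy the $\L1$ comparison between two BV entropy solutions.
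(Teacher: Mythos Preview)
Your overall architecture (BV compactness $\to$ characterization of subsequential limits via Theorem~\ref{th:characterization} $\to$ uniqueness $\to$ extension by density and contraction) matches the paper's. Two points deserve comment.

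First, tightness. Your comparison with $J_\lambda^\varepsilon(w\chi_{[-M,M]})$ reduces the question to compactly supported data, but you still owe an explanation of why $J_\lambda^\varepsilon(w\chi_{[-M,M]})$ itself has small $\L1$ tails uniformly in $\varepsilon$; conservation of total mass does not give this. The paper handles it (for $w\in\Cc\infty$) by constructing explicit exponential barriers $\overline u(x)=e^{\pm\gamma(x\mp M)}$ as upper solutions on $|x|>M$ via Theorem~\ref{thm:max_princ}, yielding the $\varepsilon$-independent $\L1$ envelope~\eqref{eq:L1_bound} and then invoking dominated convergence.

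Second, and more substantially, your uniqueness step goes through Kruzhkov doubling of variables, and you rightly flag the Dirac contribution at $x=0$ in~\eqref{eq:entropy_ineq} as the obstacle you have not resolved. The paper does something considerably more elementary, exploiting that the limit equation~\eqref{eq:limiteq} is an \emph{ODE} in $x$ with $f(x,u)$ absolutely continuous. If two limits $u,v$ both satisfy~\eqref{eq:limiteq} and the jump conditions of Theorem~\ref{th:characterization}, and $u(x_o)<v(x_o)$ somewhere, one takes the maximal interval $]a,b[$ around $x_o$ on which $u\le v$, integrates $v-u=\lambda[f(x,u)-f(x,v)]_x$ over $]a,b[$, and reduces the contradiction $0<\int_a^b(v-u)\,dx\le 0$ to two pointwise flux inequalities at the endpoints (claim~\eqref{eq:twoinequalities}). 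These inequalities are then checked case by case directly from~\eqref{eq:conserv_cond} and conditions 1--4, with no doubling, no test functions, and no need to control the interface Dirac distributionally. This is both shorter and sidesteps precisely the difficulty you anticipate; it is worth internalizing, since reducing the entropy machinery to an ODE comparison is one of the paper's main structural ideas.
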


\begin{proof}
  Suppose first $w\in \Cc\infty\left(\mathbb{R},\mathbb{R}\right)$,
  where the support of $w$ is contained in
  $\left[-M,M\right]$ for some $M>0$. 
  Define
  \[\overline
  u(x)=e^{\gamma\left(x+M\right)}.\]
  For $\gamma>0$
  sufficiently small independently of $\varepsilon\in\left(0,1\right)$, 
  we have, for all $x\in\left]-\infty,-M\right[$,
  \begin{displaymath}
  \overline u + \lambda\left[f\left(x,\overline u\right) -
  \varepsilon\overline u_{x}\right]_{x}=\overline u \left[1 +
    \lambda\gamma \left(f_{l}'\left(\overline u\right)-
    \varepsilon\gamma \right)\right]\ge 0=w.
\end{displaymath}
Therefore, for $\gamma>0$ small, in $\left]-\infty,-M\right[$,
$\overline u$
is an upper solution
to \eqref{eq:res_eq} satisfying
\begin{displaymath}
  \liminf_{x\to -\infty}\left[u(x)-\overline u(x)\right]=0,\quad
  \liminf_{x\to -M^{-}}\left[u(x)-\overline
    u(x)\right]=u\left(-M\right)-1\le 0,
\end{displaymath}
where $u(x)=\left(J_{\lambda}^{\varepsilon}w\right)(x)$.
Applying Theorem~\ref{thm:max_princ}, we have that
$0\le J_{\lambda}^{\varepsilon}w\le \overline u$ in $\left]-\infty,-M\right[$.
A similar argument holds in the interval
$\left]M,+\infty\right[$. 

Hence, for any
$\varepsilon\in \left(0,1\right)$ we have
\begin{equation}
  \label{eq:L1_bound}
  0\le J_{\lambda}^{\varepsilon}w \le
  \min\left\{1,e^{\gamma\left(x+M\right)},
    e^{-\gamma\left(x-M\right)}\right\}\in\L1\left(\mathbb{R},\mathbb{R}\right).
\end{equation}
By Theorem~\ref{th:characterization}, for any sequence
$\varepsilon_{\nu}\to 0$ there exists a subsequence
$\varepsilon_{\nu_{j}}$ such that
$u_{\nu_{j}}=J_{\lambda}^{\varepsilon_{\nu_{j}}}w$ converges pointwise
in $\mathbb{R}$ to a function $u$, and we have $0\le u\le 1$.
Using~\eqref{eq:L1_bound}, the dominated convergence theorem implies
that the pointwise limit $u$ is in
$\L1\left(\mathbb{R},\mathbb{R}\right)$ and that
$J_{\lambda}^{\varepsilon_{\nu_{j}}}w$ converges to $u$ in
$\L1\left(\mathbb{R},\mathbb{R}\right)$. The limit $u$ is 
{\color{black} a weak
solution to~\eqref{eq:limiteq}} 
and must satisfy all the properties in
Theorem~\ref{th:characterization}.  

We use contradiction to prove uniqueness of the limit. 
Assume that there are
two 
limit functions $u$ and $v$,
which satisfy all the properties of Theorem~\ref{th:characterization}. 
Since they have bounded total
variation, we consider their left continuous representatives. 
This
choice does not change at any point the left and right limits,
therefore~\eqref{eq:conserv_cond} and 1.~2.~3.~4. in 
Theorem~\ref{th:characterization} continue to
hold. Suppose that there exists a point $x_{o}$
such that $u\left(x_{o}\right)<v\left(x_{o}\right)$. Define (see
Figure~\ref{fig:claw15}):
\begin{equation}
  \begin{split}
    a &= \inf\left\{x\le x_{o}:\ u\left(\xi\right)< v\left(
        \xi\right) \text{ for any }\xi
      \in\left]x,x_{o}\right]\right\},\\
    b &= \sup\left\{x\ge x_{o}:\ u\left(\xi\right)\le v\left(
        \xi\right) \text{ for any }\xi
      \in\left[x_{o},x\right]\right\}.
  \end{split}
\end{equation}

    \begin{figure}[htbp]
    \centering
    \begin{tikzpicture}[xscale=0.9,yscale=0.9]
      \draw [line width=1.0,->] (-6,0)-- (6,0.);
      \draw [line width=1.0,dotted] (-6,4)-- (6,4.);
      \draw [line width=1.0,->] (0,-0.1)-- (0,5);
      \draw[color=black] (4.8,-0.3) node {$x$};
      \draw[color=black] (0.3,4.8) node {$u$};
      \draw[color=black] (-2.3,4.4) node {$u=1$};
      \draw[color=red] (-5.5,0.9) node {$v$};
      \draw[color=black] (-4.4,0.9) node {$u$};
      \draw[color=black] (-2.0,-0.4) node {$x_{o}$};
      \draw[color=black] (0,-0.4) node {$b$};
      \draw[color=black] (-3.0,-0.4) node {$a$};
      \draw[black,line width=0.2,dashed] (-2.0,0) -- (-2.0,2.3);
      \draw [black,line width=1.2] plot [smooth] coordinates {(-5,0.0) (-4,3)
        (-3,3.8) };
      \draw [black,line width=1.2] plot [smooth] coordinates {
        (-3,1) (-1,1) (0,0.2) };
      \draw [black,line width=1.2] plot [smooth] coordinates {
        (0,3) (1,3.5) (1.5,1.5) (2,0.3) (3,0.4) (4,2) (5,0)};
      \draw [black,line width=0.2,dashed] (-3,3.8) -- (-3,0);
      \draw [red,line width=1.2,dashed] plot [smooth] coordinates {(-6,0) (-5,1) (-4,3)
        (-3,3.1)};
      \draw [red,line width=1.2,dashed] plot [smooth] coordinates {
        (-3,2) (-1,2.5) (0,1) (0.5,0.5)
        (1.5,0.5) (2.5,0.4) (3.5,1.1) (4.2,1) (6,0)};
    \end{tikzpicture}%
    \caption{If there exists a point $x_{o}$ such that
      $v\left(x_{o}\right)>u\left(x_{o}\right)$ a contradiction is reached.}
    \label{fig:claw15}
  \end{figure}
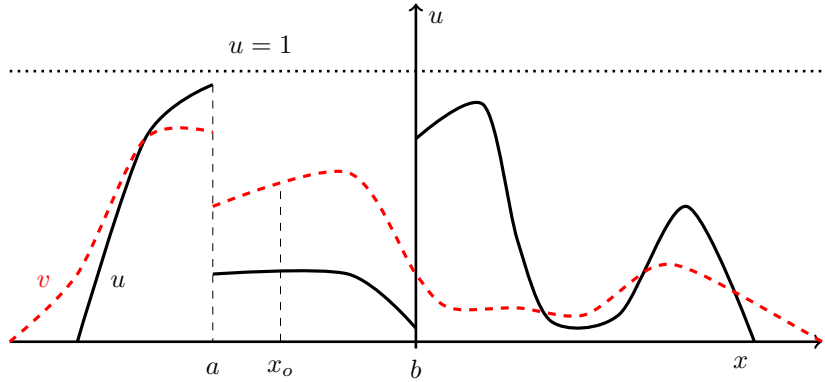

We have $a<x_{o}\le b$, 
and 
\begin{equation}
  \label{eq:definequalities}
  \begin{array}{ll}
    u(x)<v(x),\quad \text{for any }x\in ]a,x_{o}];
    &u(x)\le v(x),\quad \text{for any }x\in ]x_{o},b],\\
    v\left(a+\right)-u\left(a+\right)\ge 0,
    &v\left(b-\right)-u\left(b-\right)\ge 0,\\
    v\left(a-\right)-u\left(a-\right)\le 0, \text{ for }a\not=-\infty;
    &v\left(b+\right)-u\left(b+\right)\le 0, \text{ for }b\not= + \infty.
  \end{array}
\end{equation}

Observe that $f(x,u)$ and $f(x,v)$
are absolutely continuous thanks to~\eqref{eq:limiteq}.
Hence
integrating over 
the interval $\left]a,x_{o}\right[$ the identity 
\[v-u=
\lambda\left[f\left(x,u\right)-f\left(x,v\right)\right]_{x},\] 
we get
\begin{equation}
  \begin{split}
    0&<\int_{a}^{x_{o}}\left[v-u\right]\; dx \le\int_{a}^{b}\left[v-u\right]\; dx=
    \lambda\int_{a}^{b}\left[f(x,u)-f(x,v)\right]_{x}\; dx\\
    &=\lambda\left[f\left(b-,u(b-)\right)-f\left(b-,v(b-)\right)\right]-
    \lambda\left[f\left(a+,u(a+)\right)-f\left(a+,v(a+)\right)\right].
  \end{split}
\end{equation}

We claim that the entropy 
conditions of Theorem~\ref{th:characterization} imply 
\begin{equation}
  \label{eq:twoinequalities}
  f\left(b-,u(b-)\right)-f\left(b-,v(b-)\right)\le 0
  ~\mbox{and}~
  f\left(a+,u(a+)\right)-f\left(a+,v(a+)\right)\ge 0.
\end{equation}

The claim leads to the contradiction
\begin{displaymath}
  0<\int_{a}^{x_{o}}\left[v-u\right]\; dx\le 0.
\end{displaymath}
Thus, 
$J_{\lambda}^{\varepsilon}w$ converges in
$\L1\left(\mathbb{R},\mathbb{R}\right)$
to a unique limit $J_{\lambda} w$ that satisfies all the
properties of Theorem~\ref{th:characterization}.

Finally we take any function $w\in D$
and fix $\gamma>0$. 
Take $w_{\gamma}\in\Cc\infty\left(\mathbb{R},\mathbb{R}\right)\cap D$ such that
$\left\|w_{\gamma}-w\right\|_{\L1\left(\mathbb{R},\mathbb{R}\right)}<\gamma$. 
Then,
using the contraction property of $J_{\lambda}^{\varepsilon}$, we have
\begin{displaymath}
    \left\|J_{\lambda}^{\varepsilon}w-
      J_{\lambda}^{\mu}w
    \right\|_{\L1\left(\mathbb{R},\mathbb{R}\right)}\le
    2\gamma+
    \left\|J_{\lambda}^{\varepsilon}w_{\gamma}-
      J_{\lambda}^{\mu}w_{\gamma}
    \right\|_{\L1\left(\mathbb{R},\mathbb{R}\right)},
  \end{displaymath}
  so that
\begin{displaymath}
    \limsup_{\varepsilon,\mu\to 0}\left\|J_{\lambda}^{\varepsilon}w-
      J_{\lambda}^{\mu}w
    \right\|_{\L1\left(\mathbb{R},\mathbb{R}\right)}\le
    2\gamma.
  \end{displaymath}
  This proves that $J_{\lambda}^{\varepsilon}w$ is a
  Cauchy sequence in the complete metric space $D$, 
  hence it  converges in
  $D$ to a unique limit $J_{\lambda} w$. 
  Consequently $J_{\lambda}$ is also a contraction.

  It remains to prove the claim~\eqref{eq:twoinequalities}.
  We prove only the  first inequality, while the second being completely similar.
  If $b=+\infty$ then we have
  \begin{displaymath}
    f\left(b-,u(b-)\right)-f\left(b-,v(b-)\right)=f_{r}(0) - f_{r}(0)=0.
  \end{displaymath}
%
  If $b\in\left]0,+\infty\right[$ then
  \begin{displaymath}
    f\left(b-,u(b-)\right)-f\left(b-,v(b-)\right)=f_{r}(u(b-)) - f_{r}(v(b-)).
  \end{displaymath}
  Denote $u^{\pm}=u(b\pm), v^{\pm}=v(b\pm)$,  and suppose $u^{-}\le
  u^{+}$ (the other case being
  similar). By~\eqref{eq:conserv_cond} we have
  \begin{displaymath}
    \bar f_{u} \dot = f_{r}(u^{-}) = f_{r}(u^{+}),\quad 
    \bar f_{v} \dot = f_{r}(v^{-}) = f_{r}(v^{+}),\quad
    f_{r}(u(b-)) - f_{r}(v(b-))= \bar f_{u}-\bar f_{v}.
  \end{displaymath}
  If one of the two states $v^{-}$ or $v^{+}$ belongs to
  the interval $[u^{-},u^{+}]$, then 1.~in
  Theorem~\ref{th:characterization} implies $\bar f_{v}\ge \bar f_{u}$. 
  If none of them is on 
  the interval $[u^{-},u^{+}]$,   
  by~\eqref{eq:definequalities} we have $v^{+}\le u^{-}\le u^{+}\le  v^{-}$. 
  By point 2.~in
  Theorem~\ref{th:characterization} applied to the function $v$ we obtain 
  again $\bar f_{u}\le \bar f_{v}$, proving the claim.
  The case $b<0$ is completely similar.

Finally we consider the case $b=0$.  
  Again, suppose $u^{-}\le u^{+}$ and
  let $u^{*}$ be the state in point 3.~of Theorem~\ref{th:characterization}.
  If either $v^{-}\in  \left[u^{-},u^{*}\right]$ or
  $v^{+}\in  \left[u^{*},u^{+}\right]$  then 3.~in
  Theorem~\ref{th:characterization} implies $\bar f_{v}\ge \bar f_{u}$.
  If neither $v^{-}\in \left[u^{-},u^{*}\right]$ 
  nor $v^{+}\in\left[u^{*},u^{+}\right]$,  then
  by~\eqref{eq:definequalities} we have $v^{+}\le u^{*}\le v^{-}$.
  This relation, together with point 4.~in
  Theorem~\ref{th:characterization} applied to the function $v$ gives
  again $\bar f_{u}\le \bar f_{v}$ since there exists a point, namely
  $u^{*}\in \left[v^{+},v^{-}\right]$, such that
  $\bar f_{v}\ge \min\left\{f_{l}\left(u^{*}\right),f_{r}\left(u^{*}\right)\right\}\ge\bar
  f_{u}$.
  This completes the proof for the claim~\eqref{eq:twoinequalities}.
\end{proof}

\section{The vanishing viscosity limit for the
  generated semigroups}
\setcounter{equation}{0}
\label{sec:vvs}

In this section we apply Theorems~\ref{th:CL} and~\ref{th:BP} to approximate
the semigroup generated by
\begin{equation}
  \label{eq:9}
  u_{t}+f\left(x,u\right)_{x}=0
\end{equation}
with the semigroups generated
by the parabolic evolution equations
\begin{equation}
  \label{eq:10}
  u_t + f(x,u)_x~=~\varepsilon\, u_{xx},
\end{equation}
where the flux $f$ satisfies \textbf{f1)}.

    Theorem~\ref{th:resolven_convergence} implies
    \begin{equation}
      \label{eq:formal_resolv_conv}
      \lim_{\varepsilon\to 0}J_{\lambda}^{\varepsilon}w =
      J_{\lambda}w,\quad\text{ for any }\quad w \in D,\; \lambda>0,
    \end{equation}
    $J_{\lambda}$ being a family of contractions in
    $D$ with $D$ defined in~\eqref{eq:domain_def}. 
    Therefore we can define
    the (possibly multivalued) map
    \begin{equation}
      \label{eq:A_definition}
      A = \left\{\left(J_{\lambda}w,\frac{1}{\lambda}\left(w-J_{\lambda}w\right)\right):\ w
        \in D,\; \lambda>0\right\}\subset D
      \times \L{1}\left(\mathbb{R},\mathbb{R}\right).
    \end{equation}
    \begin{remark}
      \label{rem:domains}
      Recalling~\eqref{eq:domDef}, the domain
      of $A$ is given by:
      \begin{equation}
        \label{eq:A_domain}
        \mathcal{D}\left(A\right)=\left\{u\in D:\ \text{
            there exist }w\in D,\; \lambda>0 \text{ such that }u=J_{\lambda}w\right\}.
      \end{equation}
      Therefore, if $u\in\mathcal{D}\left(A\right)$, then, for some
      $w\in\mathcal{D}$, $\lambda>0$: 
      $u=J_{\lambda}w=\lim_{\varepsilon\to 0}u^{\varepsilon}$ with
      $u^{\varepsilon}=J_{\lambda}^{\varepsilon}w$. Since $J_{\lambda}^{\varepsilon}$ is the
      resolvent of $A^{\varepsilon}$, the function
      $u^{\varepsilon}$ solves $u^{\varepsilon}+
      \lambda\left[f\left(x,u^{\varepsilon}\right)-\varepsilon
        u^{\varepsilon}_{x}\right]_{x}=w$.
      Taking the weak limit of this equation as $\varepsilon\to 0$ we
      have that $u$ is a weak solution to $u+
      \lambda f\left(x,u\right)_{x}=w$.  
      This implies
      $w-J_{\lambda}w=w-u=\lambda f\left(x,u\right)_{x}$, therefore the operator $A$
      is single valued with
      \begin{equation}
        \label{eq:A_values}
      Au=f\left(x,u\right)_{x}\in
      \L{1}\left(\mathbb{R},\mathbb{R}\right) \text{ for any }
      u\in\mathcal{D}\left(A\right).
    \end{equation}
    The operator $A$ is a candidate as a generator for the evolution
    equation~\eqref{eq:9}.

    Take now
    $u\in
    D\cap\Cc\infty\left(\mathbb{R}\setminus\left\{0\right\},\mathbb{R}\right)$,
    satisfying $u(x)<1$ for any $x\in\mathbb{R}$.
    Then, for a suitably small $\lambda>0$ and all $\varepsilon\in]0,1[$, we have
    \begin{displaymath}
    w^{\varepsilon}=u + \lambda\left[f(x,u)-\varepsilon u
      _{x}\right]_{x}\in
    \Cc\infty\left(\mathbb{R}\setminus\left\{0\right\},\mathbb{R}\right)\cap
    D.
  \end{displaymath}
    Moreover
    $w^{\varepsilon}$ converges to $w=u+\lambda f(x,u)_{x}\in D$ in
    $\L{1}\left(\mathbb{R},\mathbb{R}\right)$ as $\varepsilon\to 0$.
    Since $u=J_{\lambda}^{\varepsilon}w^{\varepsilon}$ we compute
    \begin{displaymath}
      \left\|u-J_{\lambda}w\right\|_{\L{1}\left(\mathbb{R},\mathbb{R}\right)}
      =\lim_{\varepsilon \to 0}
      \left\|J_{\lambda}^{\varepsilon}w^{\varepsilon}-
        J_{\lambda}^{\varepsilon}w\right\|_{\L{1}\left(\mathbb{R},\mathbb{R}\right)}
      \le \lim_{\varepsilon \to 0}
      \left\|w^{\varepsilon}-w\right\|_{\L{1}\left(\mathbb{R},\mathbb{R}\right)}
      =0.
    \end{displaymath}
    This means that $u=J_{\lambda}w$ and hence
    $u\in\D\left(A\right)$. This implies 
    $\overline{\D\left(A\right)}=D$ i.e. the domain
    of $A$ is dense in $D$.
  \end{remark}

  \begin{theorem}
    \label{th:BP_limit}
      The map $A$ defined in~\eqref{eq:A_definition} (or,
      alternatively in~\eqref{eq:A_domain}, \eqref{eq:A_values})
      generates a unique
    continuous semigroup
    of contractions $S_{t}:D\to D$ whose
    trajectories are weak solutions to~\eqref{eq:9}.
    Moreover,  
    let $S^{\varepsilon}_{t}:D \to D$ be
    the semigroup generated by $A^{\varepsilon}$ in~\eqref{eq:Aepsilon_def}, then
    the following limit holds
    \begin{equation}
      \label{eq:wl}
      S_{t} \bar u~=~\lim_{\varepsilon\to 0}
      S^{\varepsilon}_{t}\bar u,\quad
      \text{ for all }\bar u \in D\text{ uniformly on
        bounded $t$ intervals}.
    \end{equation}
\end{theorem}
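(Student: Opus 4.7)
The plan is to invoke the Crandall--Liggett Theorem~\ref{th:CL} for the operator $A$ to obtain the semigroup $S_t$, and then the Brezis--Pazy Theorem~\ref{th:BP} to transfer the resolvent convergence established in Section~\ref{sec:vv} into convergence of the semigroups.

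First I would verify the hypotheses of Theorem~\ref{th:CL} for $A$. Accretivity is essentially free: Theorem~\ref{th:resolven_convergence} shows that $J_\lambda$ is a contraction on $D$ for every $\lambda>0$, and contractivity of all resolvents is equivalent to the accretivity of $A$. The range condition $\R(I+\lambda A)\supseteq\overline{\D(A)}$ is immediate from the very definition~\eqref{eq:A_definition}: for every $w\in D$, the pair $(J_\lambda w,(w-J_\lambda w)/\lambda)$ lies in $A$, so $w\in\R(I+\lambda A)$, whence $\R(I+\lambda A)\supseteq D$; and the final observation in Remark~\ref{rem:domains} gives $\overline{\D(A)}=D$. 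Theorem~\ref{th:CL} then produces the continuous semigroup of contractions $S_t:D\to D$.

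Next I would apply Theorem~\ref{th:BP} with $A^\sigma=A^\varepsilon$. The range and accretivity hypotheses on $A^\varepsilon$ were checked in Theorem~\ref{lem:has_sol0}; those on $A$ have just been verified; and the crucial resolvent-convergence hypothesis~\eqref{LBE}, namely $J^\varepsilon_\lambda w\to J_\lambda w$ for each $w\in D$ and $\lambda>0$, is exactly the content of Theorem~\ref{th:resolven_convergence}. Since $\overline{\D(A^\varepsilon)}=\L1(\mathbb{R},\mathbb{R})\supseteq D$ and $\overline{\D(A)}=D$, the set $\overline D$ of Theorem~\ref{th:BP} contains $D$, and its conclusion gives the limit~\eqref{eq:wl} uniformly on bounded $t$-intervals.

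Finally I would check that $t\mapsto S_t\bar u$ is a weak solution to~\eqref{eq:9}, following the template of the proof of Theorem~\ref{th:gen_linfty_f}. The trajectory is the $\L1$ limit of the Backward Euler iterates $u_\mu(t)=(J_\mu)^{[t/\mu]}\bar u$, each of which satisfies, for $t\ge\mu$,
\[
\frac{u_\mu(t,x)-u_\mu(t-\mu,x)}{\mu}+\left[f(x,u_\mu(t,x))\right]_x=0
\]
in the distributional sense in $x$. Pairing against a test function with compact support in $\,]0,+\infty[\,\times\mathbb{R}$, performing the discrete integration by parts in time and the distributional integration by parts in space, and letting $\mu\to 0$ yields the weak formulation of~\eqref{eq:9}; the Lipschitz bound from \textbf{f0)} together with the $\Lloc1$ convergence of $u_\mu$ lets dominated convergence dispatch the nonlinear term. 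The main obstacle in the whole program is really concentrated in Theorem~\ref{th:resolven_convergence}; once the resolvent convergence and the contractivity of $J_\lambda$ are in hand, the abstract semigroup machinery supplies everything else almost automatically, and the weak-solution check is a routine passage to the limit parallel to the one already carried out in Section~\ref{sec:vis} for the viscous case.
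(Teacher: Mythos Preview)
Your proposal is correct and matches the paper's proof almost exactly: verify the range condition for $A$ directly from its definition, invoke Crandall--Liggett, then feed Theorem~\ref{th:resolven_convergence} into Brezis--Pazy. The only minor divergence is in the weak-solution check: the paper passes to the limit $\varepsilon\to 0$ in the weak formulation of~\eqref{eq:10} (using the just-established convergence $S^\varepsilon_t\bar u\to S_t\bar u$ and Theorem~\ref{th:gen_linfty_f}), whereas you pass to the limit $\mu\to 0$ in the inviscid Backward Euler scheme; both are routine and equally valid.
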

\begin{proof}
  Take any $w\in D$, $\lambda>0$, by
  definition~\eqref{eq:A_domain},
  $J_{\lambda}w\in\mathcal{D}\left(A\right)$, therefore,
  using~\eqref{eq:A_definition} we compute
  \begin{displaymath}
    \left(I + \lambda A\right)J_{\lambda}w=J_{\lambda}w + \lambda
    \frac{1}{\lambda}\left(w- J_{\lambda}w\right)=w.
  \end{displaymath}
  Therefore, for any $\lambda>0$, we have
  $\overline{\mathcal{D}\left(A\right)}=D\subset\mathcal{R}\left(I+\lambda A\right)$.
  The previous equality also shows that the resolvent of $A$ is
  $J_{\lambda}$ which is a contraction on $D$.
  By the Crandall \& Liggett generation theorem, Theorem~\ref{th:CL}, the map $A$
  generate a semigroup of contractions
  $S_{t}$ defined on $D$.
  
  We are now in a position to apply the result by Brezis
  and Pazy.
  Since
    $\lim_{\varepsilon\to 0}J_{\lambda}^{\varepsilon}w = J_{\lambda}w$
    for any $w\in D=\overline{\mathcal{D}\left(A\right)}$ and
    $\lambda>0$ with $\mathcal{D}\left(A\right)\subset
    \overline{\mathcal{D}\left(A^{\varepsilon}\right)}=
    \L{1}\left(\mathbb{R},\mathbb{R}\right)$, 
      Theorem~\ref{th:BP} implies~\eqref{eq:wl} for the
      corresponding semigroups.
      Finally, passing to the
    limit in the weak formulation for~\eqref{eq:10},  the
    trajectories of $S_{t}$ are weak solutions to~\eqref{eq:9}.
  \end{proof}

\section{Counter examples on adapted entropies and their applications} \label{sec:Counter}

We first 
observe that the entropy solutions selected by the adapted entropies
  approach~\cite{AP1,BaiJen,Gwiazda1,CheEveKli,Panov}
  are not, in general, the vanishing viscosity limits. 
  We consider the example given in~\cite[Section 5]{AP1}, the paper where
  the concept was originally introduced. 
  One considers a conservation law  
  \begin{equation}\label{vv0}
  u_t + f(x,u)_x=0, \qquad  \mbox{where} \quad
  f(x,u) 
  = \begin{cases} \displaystyle f_l(u)= \frac12 (u-1)^2 , & x\le0,\\[2mm]
 \displaystyle f_r(u) = \frac12 {u^2}, & x>0. \end{cases} 
  \end{equation}
  See Figure~\ref{fig:RP} for an illustration of 
 the graphs for the flux functions $f_l$ and $f_r$.

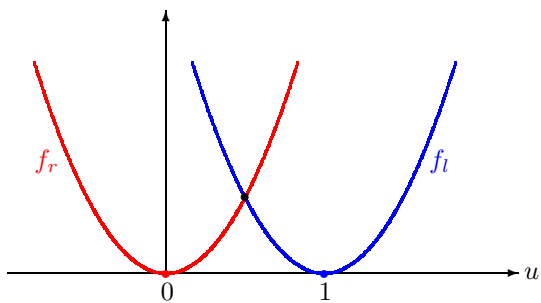
\begin{figure}[htbp]
\begin{center}
\setlength{\unitlength}{0.7mm}
\begin{picture}(90,55)(0,-5)  
\put(0,0){\vector(1,0){97}}\put(98,-1){$u$}
\put(30,0){\vector(0,1){50}}
\put(29,-5){$0$}\put(59,-5){$1$}
\thicklines
\color{red}
\qbezier(5,40)(30,-40)(55,40)\put(5,20){$f_r$}
\put(30,0){\circle*{1.5}}
\color{blue}
\qbezier(35,40)(60,-40)(85,40)\put(80,20){$f_l$}
\put(60,0){\circle*{1.5}}
\color{black}
\put(45,14.5){\circle*{1.5}}
\end{picture}
\caption{Graphs for the flux functions $f_l$ and $f_r$.}
\label{fig:RP}
\end{center}
\end{figure}

Below we give several examples, on various cases and aspects of this problem.

\begin{example}\label{ex:C1}
In this example we show that the solution that satisfies the adapted entropy 
is different from the one obtained by vanishing viscosity. 
Consider the Riemann problem
  \begin{equation}\label{eq:iii1}
  u_t +f(x,u)_x=0, \qquad 
  u(0,x)=\begin{cases} u_l =\frac{1}{2}, & x\le0,\\
  u_r = \frac12, & x>0, \end{cases} 
  \end{equation}
  and the corresponding viscous equation
   \begin{equation}\label{eq:iii1v}
     u_{t}+f(x,u)_{x}=\varepsilon u_{xx}, \qquad       u(0,x)=\frac{1}{2}.
\end{equation}
  
 We observe that the graphs of $f_l$ and $f_r$ intersect at $u=\frac12$ where
 $ f_l(\frac12) = f_r(\frac12) $.
Therefore the  constant function $u^{\varepsilon}(t,x)=\frac{1}{2}$ is the solution
of the Cauchy problem for the viscous equation~\eqref{eq:iii1v}
  for any $\varepsilon>0$. 
  Hence as $\varepsilon\to 0+$, the solution 
  $u^{\varepsilon}$ converges strongly to the constant function $u(t,x)=\frac12$, 
  which is the  vanishing  viscosity solution for the non-viscous equation 
 with the same initial data~\eqref{eq:iii1}. 

  However, the solution selected by the adapted entropy with initial condition~\eqref{eq:iii1} 
  is different. 
  From formula~\cite[(5.7)]{AP1}, we see that 
  the adapted entropy solution consists of three parts:
  \begin{itemize}
 \item  
 a  rarefaction wave with negative characteristic speed, solving the
  Riemann problem 
  \[ u_t + f_l(u)_x =0, \qquad u(0,x)=\begin{cases} \frac12, & x<0 ,\\ 1 , & x>0,\end{cases} \]
  \item 
  a discontinuity at $x=0$, with   the traces   $u(t,0-)=1$ and $ u(t, 0+)  =0$, and
 
  \item 
  a  rarefaction wave with positive characteristic speed, solving the
  Riemann problem 
  \[ u_t + f_r(u)_x =0, \qquad u(0,x)=\begin{cases} 0, & x<0 ,\\ \frac12 , & x>0.\end{cases} \]
  \end{itemize} 
\end{example}

\begin{example}\label{ex:C2}   
We now show that, a discontinuity satisfying the adapted entropy condition 
can not be obtained as the vanishing viscosity limit of a viscous traveling wave. 
  Formula~\cite[(5.7)]{AP1} further implies that, for the Cauchy problem 
  with the initial condition
  \begin{equation}\label{vv6}
  u(0,x) = \begin{cases}  1, & x<0 ,\\ 0, & x>0,\end{cases}  
  \end{equation}
  the adapted entropy solution is stationary  in time, i.e.  
  \begin{equation}\label{vv5}
   u(t,x)=u(0,x)
   = \begin{cases}  1, & x<0 ,\\ 0, & x>0,\end{cases}  , \qquad \forall t \ge 0. 
   \end{equation}
   
  We claim that the  solution~\eqref{vv5} can not be obtained by vanishing viscosity 
  of a viscous traveling wave for the viscous equation
  \begin{equation}\label{vv}
  u_t + f(x,u)_x = \varepsilon u_{xx}.
  \end{equation}
  
  Indeed, fix $\varepsilon>0$, and let $U$ be a monotone stationary viscous profile for~\eqref{vv}, 
  satisfying the asymptotic limits
    \begin{equation}\label{vv2}
  \lim_{x\to-\infty} U(x) = 1, \qquad \lim_{x\to+\infty} U(x) = 0.
  \end{equation}
  Then, $U$ satisfies the ODE
      \begin{equation}\label{vv3}
  \varepsilon U'=
  f(x,U).
  \end{equation}
  
  With the asymptotic conditions~\eqref{vv2} we seek monotonically decreasing 
  solutions,   i.e. $U'(x)\le 0$  for all $x\in\mathbb{R}$. 
  However from Figure~\ref{fig:RP} it is clear that, for every $U$ between $0$ and $1$, we have
  both $f_l(U) >0$ and $f_r(U)>0$, therefore $f(x,U)>0$ and thus 
  $U'(x) >0$ for all $x\in\mathbb{R}$ and $U\in(0,1)$, 
  a contradiction.
  We conclude that no stationary,
  monotonically decreasing viscous wave profiles can exist with the asymptotic 
  conditions~\eqref{vv2}, proving the claim.
  \end{example} 
  
  \begin{remark}
  We remark that a vanishing viscosity Riemann solver was 
  constructed in~\cite{GS2016} and a rigorous proof was given. 
Following the algorithm in~\cite{GS2016},
the unique vanishing viscosity solution for the Riemann problem in Example~\ref{ex:C2}
consists of (i) a shock from $u=1$ to $u=\frac12$ with negative wave speed for $x<0$, 
(ii) $u(t,0-)=u(t,0+)=\frac12$ at $x=0$, and 
(iii) a shock from $u=\frac12$ to $u=0$ with positive wave speed for $x>0$.
 \end{remark}

\begin{example}\label{ex:C3}
It would be of interest to analyze rigorously the admissible 
solutions selected by the adapted entropies
as limit solutions of some regularization (for example vanishing viscosity), 
a problem which is still open. 
Preliminarily, 
by comparing the above examples to Example 4.4 in~\cite{MR3663611}, it appears that 
the adapted entropy condition selects the solution with $\kappa=\infty$,
i.e., with $\varepsilon_n\equiv 0$ in~\eqref{eq:approx2}.
We now provide a simple proof for this claim in the setting of this example.

The flux in~\eqref{vv0} can also be rewritten as
\[
f(x,u) = \frac{(u-1+H(x))^2}{2},
\]
where $H$ is the Heaviside step function. 
Consider a  smooth and monotone mollification $H^\delta$ such that
\[
H^\delta(x) =  \begin{cases} 0 ~ &\mbox{for} ~x\le -\delta,\\
  1 ~ &\mbox{for} ~x\ge \delta, \end{cases} 
  \qquad
(H^\delta)'(x) \ge 0, ~\mbox{for} ~~ |x| \le \delta,
\]
and
\[
\lim_{\delta\to 0} H^\delta(x) = H(x)\qquad \mbox{pointwise} ~\forall x\in \mathbb{R}\setminus \{0\} .
\]
We denote the mollified flux as 
\[
f^\delta(x,u) \;\dot=\; \frac{(u-1 + H^\delta(x) )^2}{2}.
\]

Fix an $x$, we have
\[ 
(f^\delta)_u(x,u) = u-1+H^\delta(x) . 
\]
Therefore the minimum of the mapping $u\mapsto f^\delta$ is 
at 
\[ u_m(x)=1-H^\delta(x), \qquad \mbox{where}\quad 
f^\delta(x,u_m(x)) =0\quad \forall x\in\mathbb{R}.
\]
One can readily verify that the smooth function $u^\delta$ defined as
\[
u^\delta(t,x) = \begin{cases}  1, \quad & x\le -\delta,\\
u_m(x) = 1 -H^\delta(x), \qquad & |x| \le \delta,\\
0, & x\ge \delta,
\end{cases}
\]
is a stationary solution of the Cauchy problem for the conservation law
\[
u_t + f^\delta(x,u)_x =0
\]
with initial condition $u^\delta(0,\cdot)$.   
Taking the limit $\delta \to 0$, we see  that
$u^\delta(t,\cdot) $  converges to $u(t,\cdot)$  in~\eqref{vv5},
which is the solution selected by the adapted entropies. 
This proves our claim.
\end{example}  

The analysis in Example~\ref{ex:C3} applies only to this specific example, 
with the specific choice of mollification. 
A rigorous analysis for the general cases is beyond the scope of this paper, 
and could be the topic of a separated future work.

\begin{remark}
We remark that the adapted entropies require 
strong restrictions on the fluxes, even in  one space dimension.
In~\cite{AP1} (H$3'$), the flux can have at most one
single minimum (or maximum) \emph{at the same level} for any $x$.
This restricts a direct application to models of traffic flow with rough 
road conditions and road junctions (see~\cite{GNPT1}), 
where a typical flux function is
$f(x,u) = V(x) u(1-u)$ with $V$ discontinuous. 
In contrast, our hypothesis allows the
presence in the flux both at $x>0$ and $x<0$ of any number of
maxima/minima at any number of different levels.
\end{remark}

The adapted entropy concept is utilized in~\cite{Panov} to
establish uniqueness of solutions for scalar conservation laws with  discontinuous flux. 
The procedure introduced in~\cite[(1.5)]{Panov}
allows the study of rather general right and left flux functions,
in the adapted entropies framework. 
Unfortunately, in the case where the right and left fluxes have extrema at
different levels, one obtains non-physical solutions in applications. 
Below we give a concrete example. 

 \begin{example}
   \label{exa:adapted2}
Consider the Riemann problem for traffic flow
   \begin{displaymath}
     \begin{cases}
       u_{t}+f\left(x,u\right)_{x}=0,\\
       u\left(0,x\right)=\frac{1}{2},
     \end{cases}
   \end{displaymath}
   for the flux
   \begin{displaymath}
     f\left(x,\omega\right)=
     \begin{cases}
       f_{l}\left(\omega\right)=8\omega\left(1-\omega\right)&\text{ if
       }x\le0\text{ and }\omega\in\left[0,1\right],\\
       f_{r}\left(\omega\right)=4\omega\left(1-\omega\right)&\text{ if
       }x>0\text{ and }\omega\in\left[0,1\right].
     \end{cases}
   \end{displaymath}
   Depending on the choice of $g$ and $\beta$ satisfying
   $f(x,\omega)= g\left(\beta\left(x,\omega\right)\right)$ as
   in~\cite{Panov} one can get only two types of solutions. The first
   one is a connection in the sense of~\cite{MisVee,BurKarKenTow} with
   $A=1$ and $B=0$ as it has already been observed in~\cite[after
   (4.3)]{Panov}. The other type of solutions take values outside the
   interval $[0,1]$ which is ``nonphysical'' since the conserved
   variable is a density function. 
   Other types of solutions studied in
   the literature, such as the ones obtained with connections using
   different $A$ and $B$ values, or the one obtained by vanishing
   viscosity, do not satisfy the adapted entropy condition. 
 \end{example}

We finally remark that the solutions considered in~\cite{Panov} are not the
vanishing viscosity solutions in general case, 
from the discussions in Examples~\ref{ex:C1}-\ref{ex:C3}.

  \section{Examples and concluding remarks}
\begin{example}
We first give several examples of the backward Euler operators for the 
non viscous conservation law
\[ u_t + f(u)_x=0.\]
If $f_u$ has a fixed sign, say $f_u >0$, then 
for any $\lambda>0$
the backward Euler operator $J_\lambda$ generates a continuous function $u$,  
even for discontinuous function of $w$. 
In this simpler case,  the entropy condition is automatically satisfied,
and the operator generates a Lipschitz semigroup of 
entropy weak solution for the conservation law \cite{C72}.
However, when $f_u$ changes sign, the backward Euler solution might not be unique,
and entropy conditions
(such as in Theorem~\ref{th:characterization}) 
are required to single out the admissible solution.

To fix the idea, we consider the traffic flow model with $f(u)=u(1-u)$. 
Given $w$, the solution $u=J_\lambda w$ satisfies the ODE
\begin{equation}\label{BEode}
u'(x) =\frac{w-u}{\lambda f'(u)} = \frac{w-u}{2 \lambda (0.5-u)} .
\end{equation}
If $w(x)$ is piecewise constant, the solution for the above ODE can be constructed explicitly
on each interval where $w(x)$ is constant.
One can then piece them together to form a solution on the whole real line. 

We observe that $u'$ blows up at $u=0.5$, unless $w=0.5$ also. 
When $w=0.5$, we have $u'=1/(2\lambda)$ if $u\not=0.5$.
Since $u'$ can be anything at $u=0.5$, we also have $u\equiv0.5$ as a solution.

We consider 3 typical cases, where we use $\lambda=0.5$.

\textbf{Case 1.} If we use the initial condition
\[
w(x) = \begin{cases} 0.5 & (|x|>1), \\
1 & (-1 <x<1),
\end{cases} 
\]
the solution of the conservation law consists of a shock 
at $x=-1$ and a rarefaction at $x=1$. 
Furthermore, we have $f'(u) \le0$ in the solution. 
Consequently, the backward Euler solution $u(x)$ contains no jump, 
see Figure~\ref{fig:Case1} for a qualitative illustration.
However, we observe vertical tangent at $x=1$ where $u=0.5$ and $f'(u)=0$. 

\begin{figure}[htbp]
\begin{center}
\setlength{\unitlength}{0.8mm}
\begin{picture}(120,48)(0,0)
\put(0,0){\vector(1,0){120}}
\multiput(0,20)(1,0){120}{\line(1,0){0.5}}
\multiput(40,0)(0,1){40}{\line(0,1){0.5}}
\multiput(80,0)(0,1){40}{\line(0,1){0.5}}
\put(40,38){\line(-1,-1){38}}\put(40,30){\line(-1,-1){30}}\put(40,20){\line(-1,-1){20}}
\put(40,10){\line(-1,-1){10}}\put(32,40){\line(-1,-1){30}}\put(22,40){\line(-1,-1){20}}
\put(80,0){\line(1,1){40}}\put(90,0){\line(1,1){30}}\put(100,0){\line(1,1){20}}
\put(80,10){\line(1,1){30}}\put(80,20){\line(1,1){20}}\put(80,30){\line(1,1){10}}
\qbezier(40,38)(108,28)(60,0)
\qbezier(40,32)(100,25.5)(50,0)
\qbezier(40,27)(90,24)(40,3)
\linethickness{0.45mm}
\put(0,20){\line(1,0){40}}\put(40,20){\line(0,1){20}}\put(40,40){\line(1,0){40}}
\put(80,40){\line(0,-1){20}}\put(80,20){\line(1,0){40}}
\put(81,28){$w$}
\color{red}
\put(0,20){\line(1,0){22}}
\qbezier(22,20)(31,29)(40,38)
\qbezier(40,38)(80,32)(80,20)
\put(80,20){\line(1,0){40}}
\put(94,21){$u$}
\end{picture}
\caption{Case 1. Plots of $w(x)$ (thick black lines), possible solutions of~\eqref{BEode} (thin black curves), and $u(x)$ (thick red curves) for the case with $f'(u)\le 0$.  Here $u(x)$ contains no jumps.}
\label{fig:Case1}
\end{center}
\end{figure}
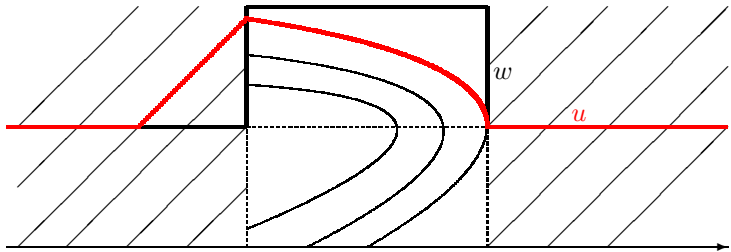

\textbf{Case 2.} If we use the initial condition
\[w(x) = \begin{cases} 0.5 & (x<-1), \\
1 & (-1 <x<0),\\
0.25 & (0<x<0.5),\\
0.5 & (x>0.5),
\end{cases}
\]
the solution for the conservation law consists of a transonic rarefaction 
initiated at $x=0$.  
The backward Euler solution $u(x)$ contains no jumps, 
see Figure~\ref{fig:Case2},
although  the gradient is infinite at $x=0$ where $u=0.5$.

\begin{figure}[htbp]
\begin{center}
\setlength{\unitlength}{0.8mm}
\begin{picture}(140,48)(0,0)
\put(0,0){\vector(1,0){140}}
\multiput(0,20)(1,0){140}{\line(1,0){0.5}}
\multiput(40,0)(0,1){40}{\line(0,1){0.5}}
\multiput(80,0)(0,1){40}{\line(0,1){0.5}}
\multiput(100,0)(0,1){40}{\line(0,1){0.5}}
\put(40,38){\line(-1,-1){38}}\put(40,30){\line(-1,-1){30}}\put(40,20){\line(-1,-1){20}}
\put(40,10){\line(-1,-1){10}}\put(32,40){\line(-1,-1){30}}\put(22,40){\line(-1,-1){20}}
\put(100,0){\line(1,1){40}}\put(110,0){\line(1,1){30}}\put(120,0){\line(1,1){20}}
\put(100,10){\line(1,1){30}}\put(100,20){\line(1,1){20}}\put(100,30){\line(1,1){10}}
\qbezier(40,38)(108,28)(60,0)
\qbezier(40,32)(100,25.5)(50,0)
\qbezier(40,27)(90,24)(40,3)
\qbezier(100,12)(63,13)(95,40)
\qbezier(100,15)(80,19)(100,28)
\linethickness{0.45mm}
\put(0,20){\line(1,0){40}}\put(40,20){\line(0,1){20}}\put(40,40){\line(1,0){40}}
\put(80,40){\line(0,-1){30}}\put(80,10){\line(1,0){20}}
\put(100,10){\line(0,1){10}}\put(100,20){\line(1,0){40}}
\put(81,30){$w$}
\color{red}
\put(0,20){\line(1,0){22}}
\qbezier(22,20)(31,29)(40,38)
\qbezier(40,38)(80,32)(80,20)
\qbezier(80,20)(80,13)(100,12)
\qbezier(100,12)(104,16)(108,20)
\put(108,20){\line(1,0){29}}
\put(117,21){$u$}
\end{picture}
\caption{Case 2. Plots of $w(x)$ (thick black lines), possible solutions of~\eqref{BEode} (thin black curves), and $u(x)$ (thick red curves) for the case with a transonic rarefaction at $x=0$. The solution $u(x)$ contains no jumps.}
\label{fig:Case2}
\end{center}
\end{figure}
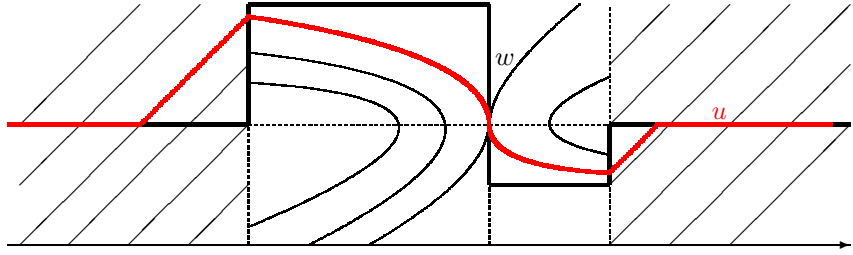

\textbf{Case 3.}
If we use the initial data 
\[
w(x) = \begin{cases} 0.5 & (x<-1) ,\\
0.25 & (-1 <x<0),\\
1 & (0<x<1),\\
0.5 & (x>1),
\end{cases}
\]
the solution of the conservation laws contains a transonic shock initiated at $x=0$. 
The backward Euler solution $u(x)$ contains a jump, see Figure~\ref{fig:Case3}.
Note that there are many places to insert the jump, if no entropy conditions are required.
The unique location of the jump in $u(x)$ is determined by the entropy
conditions in Theorem~\ref{th:characterization} point 1.

\begin{figure}[htbp]
\begin{center}
\setlength{\unitlength}{0.8mm}
\begin{picture}(160,48)(0,0)
\put(0,0){\vector(1,0){160}}
\multiput(0,20)(1,0){160}{\line(1,0){0.5}}
\multiput(40,0)(0,1){40}{\line(0,1){0.5}}
\multiput(80,0)(0,1){40}{\line(0,1){0.5}}
\multiput(120,0)(0,1){40}{\line(0,1){0.5}}
\put(40,38){\line(-1,-1){38}}\put(40,30){\line(-1,-1){30}}\put(40,20){\line(-1,-1){20}}
\put(40,10){\line(-1,-1){10}}\put(32,40){\line(-1,-1){30}}\put(22,40){\line(-1,-1){20}}
\put(120,0){\line(1,1){40}}\put(130,0){\line(1,1){30}}\put(140,0){\line(1,1){20}}
\put(120,10){\line(1,1){30}}\put(120,20){\line(1,1){20}}\put(120,30){\line(1,1){10}}
\qbezier(80,38)(148,28)(100,0)
\qbezier(80,32)(140,25.5)(90,0)
\qbezier(80,27)(130,24)(80,3)
\qbezier(40,20)(40,12)(80,11)\qbezier(40,20)(40,33)(60,40)
\qbezier(80,38)(10,15)(80,13)
\qbezier(80,30)(40,18)(80,16)
\linethickness{0.45mm}
\put(0,20){\line(1,0){40}}\put(40,20){\line(0,-1){10}}\put(40,10){\line(1,0){40}}
\put(80,40){\line(0,-1){30}}\put(80,40){\line(1,0){40}}
\put(120,40){\line(0,-1){20}}\put(120,20){\line(1,0){20}}
\put(51,7){$w$}
\color{red}
\put(0,20){\line(1,0){40}}
\qbezier(40,20)(40,16)(51,13.5)
\put(51,13.5){\line(0,1){13}}
\qbezier(51,26.5)(65,33.5)(80,38)
\qbezier(80,38)(120,32)(120,20)
\put(120,20){\line(1,0){40}}
\put(115,21){$u$}
\end{picture}
\caption{Case 3. Plots of $w(x)$ (thick black lines), possible solutions of~\eqref{BEode} (thin black curves), and $u(x)$ (thick red curves) for the case with  a transonic shock initiated at $x=0$.  The solution $u(x)$ has a jump.  The location of the discontinuity is uniquely determined by the entropy conditions in Theorem~\ref{th:characterization}.}
\label{fig:Case3}
\end{center}
\end{figure}
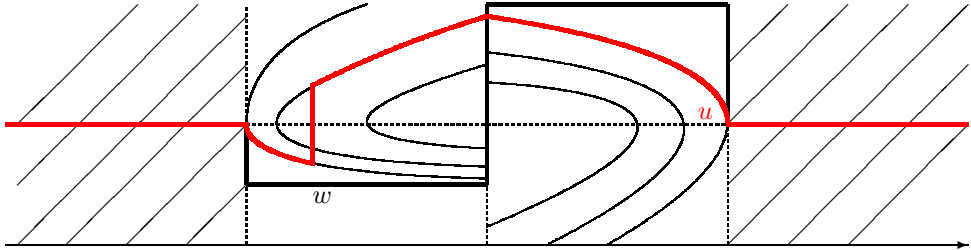

\end{example}

 \begin{example}\label{example2}
 We now give an example of the backward Euler operator for the non viscous 
 conservation law with discontinuous flux. We consider
 \[
 u_t + f(x,u)_x=0, \qquad 
 f(x,u) = \begin{cases}
   f_l(u) = u(1-u) & (x<0), \\
   f_r(u)=2 u (1-u) & (x>0).
 \end{cases}
 \]
We use the following initial data:
\[
w(x) = \begin{cases} 0.5 & (x<-1) ,\\
0.4 & (-1 <x<0),\\
0.7 & (0<x<1),\\
0.5 & (x>1).
\end{cases}
\]
The solution $u=J_\lambda w$ satisfies the ODE
\begin{equation}\label{BEode2}
u'(x) = \frac{w-u}{\lambda f_u(x,u)}.
\end{equation}
In the solution of the conservation law, we have rarefaction waves at $x=\pm1$.
The Riemann problem at $x=0$ is solved with a  stationary jump and a shock 
with positive speed. 
The backward Euler solutions without entropy conditions, are not unique. 
Applying the entropy conditions of Theorem~\ref{th:characterization}, the solution $u(x)$ 
contains two discontinuities, as illustrated in Figure~\ref{fig:Ex2}.
The discontinuity at $x=0$ satisfies the condition in point 4 of 
Theorem~\ref{th:characterization}, while the location of the 
transonic shock satisfies point 1 of Theorem~\ref{th:characterization}.

\begin{figure}[htbp]
\begin{center}
\setlength{\unitlength}{0.8mm}
\begin{picture}(170,48)(-15,0)
\put(-17,18.5){$u=0.5$}
\multiput(0,20)(1,0){160}{\line(1,0){0.5}}
\multiput(40,0)(0,1){40}{\line(0,1){0.5}}
\multiput(80,0)(0,1){40}{\line(0,1){0.5}}
\multiput(120,0)(0,1){40}{\line(0,1){0.5}}
\put(40,38){\line(-1,-1){38}}\put(40,30){\line(-1,-1){30}}\put(40,20){\line(-1,-1){20}}
\put(40,10){\line(-1,-1){10}}\put(32,40){\line(-1,-1){30}}\put(22,40){\line(-1,-1){20}}
\put(120,0){\line(2,1){30}}\put(130,0){\line(2,1){20}}\put(140,0){\line(2,1){10}}
\put(120,10){\line(2,1){30}}\put(120,20){\line(2,1){30}}\put(120,30){\line(2,1){20}}
\qbezier(80,38)(148,28)(100,0)
\qbezier(80,32)(130,25.5)(90,0)
\qbezier(80,27)(120,25)(80,1)
\qbezier(40,20)(40,12)(80,11)\qbezier(40,20)(40,33)(60,40)
\qbezier(80,38)(10,15)(80,13)
\qbezier(80,30)(40,18)(80,16)
\linethickness{0.45mm}
\put(0,20){\line(1,0){40}}\put(40,20){\line(0,-1){10}}\put(40,10){\line(1,0){40}}
\put(80,40){\line(0,-1){30}}\put(80,40){\line(1,0){40}}
\put(120,40){\line(0,-1){20}}\put(120,20){\line(1,0){20}}
\put(51,7){$w$}
\color{red}
\put(0,20){\line(1,0){40}}
\qbezier(40,20)(40,12)(80,11)
\put(80,11){\line(0,-1){10}}
\qbezier(80,1)(82,2)(83,3)
\put(83,2.5){\line(0,1){35}}
\qbezier(83,37.5)(120,32)(120,20)
\put(120,20){\line(1,0){40}}
\put(115,21){$u$}
\end{picture}
\caption{Plots of $w(x)$ (thick black lines), possible solutions of the ODE~\eqref{BEode2} (thin black curves), and $u(x)$ (thick red curves) for Example \ref{example2}. 
The solution $u(x)$ contains two jumps, one at $x=0$, and the other one represents
the transonic shock. 
The location of the discontinuity is uniquely determined by the entropy conditions 
in Theorem~\ref{th:characterization}.}
\label{fig:Ex2}
\end{center}
\end{figure}
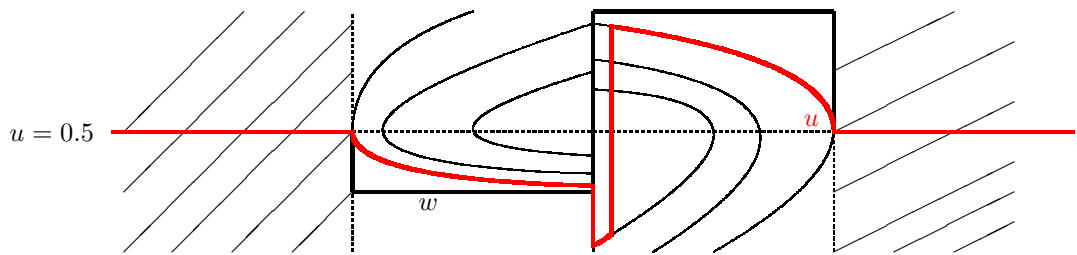
 
 \end{example}

 \bigskip

 We now study the same phenomenon from the point of view of the non
 linear generator of the semigroup.
    At first sight, the evolution equation~\eqref{eq:9} should
    correspond to the operator $B$ defined by
    \begin{equation}
      \label{eq:B_definition}
      \left(u,v\right)\in B\quad\text{ if and only if }\quad
      u,\;v\in\L{1}\left(\mathbb{R},\mathbb{R}\right)
      \text{ and }v = f\left(x,u\right)_{x},
    \end{equation}
    as in the definition of the
    operators $A^{\varepsilon}$. Unfortunately, as we have seen from
    the point of view of the backward Euler operator, the domain of $B$ is
    ``too big'' and it is not an accretive operator,
    therefore the Crandall \& Liggett generation theorem does not apply. 
    We see this in a spatial homogeneous case.

  \begin{example}
    \label{ex:1}
    Consider~\eqref{eq:9} with $f_{l}(u)=f_{r}(u)=f(u)=u(1-u)$, i.e.~a
    classical example for scalar conservation laws:
    \begin{equation}
      \label{eq:classical_ex}
      u_{t}+\left[u\left(1-u\right)\right]_{x}=0.
    \end{equation}
    Let $\phi(x)\in[0,1]$ be a Lipschitz continuous function such that $\phi(x)=0$
    for $x\le-2$ and $\phi(x)=1$ for $x\ge-1$ and define the following
    family of functions parametrized by $\gamma\in[0,1]$
        (See Figure~\ref{fig:fun1}):

\[
      u_{\gamma}(x)=
      \begin{cases}
        \phi(x) &\text{ if }x\le -1,\\
        1&\text{ if }-1<x\le -\gamma,\\
        \frac{1}{2}\left(1-\frac{x}{\gamma}\right)&\text{ if }
        x\in\left]-\gamma,\gamma\right[,\\
        0&\text{ if }x\ge \gamma.
      \end{cases}
\]

  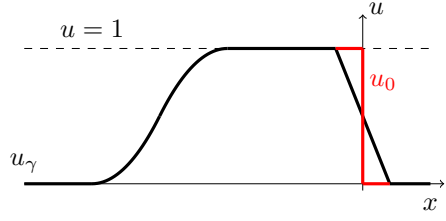
\begin{figure}[htbp]
    \centering
        \begin{tikzpicture}[xscale=0.9,yscale=0.9]
      \draw [line width=0.2,->] (-5,0)-- (1.2,0.);
      \draw [line width=0.2,dashed] (-5,2)-- (1,2.);
      \draw [line width=0.2,->] (0,-0.1)-- (0,2.5);
      \draw[color=black] (1.0,-0.3) node {$x$};
      \draw[color=black] (0.2,2.6) node {$u$};
      \draw[color=black] (-4,2.3) node {$u=1$};
      \draw [black,line width=1.2] (-5,0) -- (-4,0);
      \draw [black,line width=1.2] plot [domain=-4:-3] (\x,{(\x+4)*(\x+4)});
      \draw [black,line width=1.2] plot [domain=-3:-2] (\x,{-(\x+2)*(\x+2) + 2});
      \draw [black,line width=1.2] (-2,2) -- (-0.4,2);
      \draw [black,line width=1.2] (-0.4,2) -- (0.4,0);
      \draw [black,line width=1.2] (0.4,0) -- (1,0);
      \draw[color=black] (-5.0,0.3) node {$u_{\gamma}$};
      \draw [red,line width=1.2] (-0.4,2) -- (0,2);
      \draw [red,line width=1.2] (0,2) -- (0,0);
      \draw [red,line width=1.2] (0,0) -- (0.4,0);
      \draw[color=red] (0.3,1.5) node {$u_{0}$};
    \end{tikzpicture}%
    \caption{Graph of function $u_\gamma(x)$.}
  \label{fig:fun1}
  \end{figure}

    Since $u_{\gamma}$ for $\gamma>0$ is Lipschitz continuous, it
    belongs to the domain of $B$. When $\gamma=0$, $u_{0}$ is
    discontinuous at $x=0$, but $x\mapsto f\left(u_{0}\right)$ is smooth,
    therefore $u_{0}$ also belongs to the domain of $B$.  But we will
    show that
    $u_{0}$ \emph{does not} belong to the domain of $A$ as defined
    in~\eqref{eq:A_domain}.

    For $\gamma\in[0,1]$ we have (see Figure~\ref{fig:fun2})
\[      \left(Bu_{\gamma}\right)(x)=
      \begin{cases}
      \displaystyle  f\left(\phi(x)\right)_{x}&\text{ if }x\le -1,\\
        0&\text{ if }-1<x\le -\gamma,\\
        \displaystyle-\frac{x}{2\gamma^{2}}&\text{ if }-\gamma<x<\gamma,\\
        0&\text{ if }x\ge \gamma,
      \end{cases}
      \qquad 
      \left(Bu_{0}\right)(x)=
      \begin{cases}
        f\left(\phi(x)\right)_{x}&\text{ if }x\le -1,\\
        0&\text{ if }-1<x.
      \end{cases}
      \]

  \begin{figure}[htbp]
    \centering
        \begin{tikzpicture}[xscale=0.8,yscale=0.8]
      \draw [line width=0.2,->] (-5,0)-- (1.2,0.);
      \draw [line width=0.2,->] (0,-0.1)-- (0,2.3);
      \draw[color=black] (1.0,-0.3) node {$x$};
      \draw[color=black] (0.22,2.2) node {$u$};
      \draw [black,line width=1.2] (-5,0) -- (-4,0);
      \draw [black,line width=1.2] plot [domain=-4:-3]
      (\x,{2*4*(\x/2+2)*(1-2*2*(\x/2+2)*(\x/2+2))});
      \draw [black,line width=1.2] plot [domain=-3:-2]
      (\x,{-2*4*(\x/2+1)*(1-2*(1-2*(\x/2+1)*(\x/2+1)))});
      \draw [black,line width=1.2] (-2,0) -- (-0.4,0);
      \draw [black,line width=1.2] plot [domain=-0.3:0.25]
      (\x,{-25 * \x/2 /2});
      \draw [black,line width=1.2] (0.4,0) -- (1,0);
      \draw[color=black] (-5.0,0.3) node {$Bu_{\gamma}$};
      \draw [red,line width=1.2] (-0.4,0) -- (0.4,0);
      \draw[color=red] (0.4,0.3) node {$Bu_{0}$};
    \end{tikzpicture}%
        \caption{Graph of function $(B u_\gamma)(x)$.}
  \label{fig:fun2}
  \end{figure}

    Then for $\gamma>0$, $\lambda\in]0,1]$, we have
    \begin{equation}
      \begin{split}
      \left\|u_{\gamma}-u_{0}\right\|_{\L{1}\left(\mathbb{R},\mathbb{R}\right)}
      &=2\int_{0}^{\gamma}\frac{1}{2}
      \left(1-\frac{x}{\gamma}\right)\;dx=\frac{\gamma}{2},\\
        \left\|u_{\gamma}+\lambda Bu_{\gamma}-\left(u_{0}+\lambda
            Bu_{0}\right)\right\|_{\L{1}\left(\mathbb{R},\mathbb{R}\right)}
        &=2\int_{0}^{\gamma}\left|\frac{1}{2}\left(1-\frac{x}{\gamma}\right)-\lambda\frac{x}{2\gamma^{2}}\right|\;dx,\\
        &=\int_{0}^{\gamma}\left|1-\frac{x}{\gamma}\left(1+\frac{\lambda}{\gamma}\right)\right|\;dx=\frac{\gamma}{2} \cdot
        \frac{1+{\lambda^{2}}/{\gamma^{2}}}{1+{\lambda}/{\gamma}}.
      \end{split}
    \end{equation}
    so that
    \begin{displaymath}
      \left\|u_{\gamma}+\lambda Bu_{\gamma}-\left(u_{0}+\lambda
          Bu_{0}\right)\right\|_{\L{1}\left(\mathbb{R},\mathbb{R}\right)}=
      \frac{1+{\lambda^{2}}/{\gamma^{2}}}{1+{\lambda}/{\gamma}}
      \left\|u_{\gamma}-u_{0}\right\|_{\L{1}\left(\mathbb{R},\mathbb{R}\right)}.
    \end{displaymath}
    Therefore choosing $\lambda\in\left]0,\frac{1}{2}\right]$ and
    $\gamma=2\lambda$ we have
    \begin{displaymath}
      \left\|u_{2\lambda}+\lambda Bu_{2\lambda}-\left(u_{0}+\lambda
          Bu_{0}\right)\right\|_{\L{1}\left(\mathbb{R},\mathbb{R}\right)}=
      \frac{5}{6}
      \left\|u_{\gamma}-u_{0}\right\|_{\L{1}\left(\mathbb{R},\mathbb{R}\right)}.      
    \end{displaymath}
    
    This shows that  $B$ is  not accretive. Furthermore, it does not  satisfies the
    broader condition~\cite[(1.1)]{BP}. Observe that an argument
    similar to the one in Remark~\ref{rem:domains} shows that all
    Lipschitz continuous functions in $D$ are contained in
    $\mathcal{D}\left(A\right)$, therefore $u_{\gamma}\in\mathcal{D}\left(A\right)$
    for any $\gamma\in]0,1]$, hence since $A$ is accretive,  $u_{0}$
    \emph{cannot} belong to the domain of $A$. On the other hand
    some computations show that 
    \begin{displaymath}
      \left\|u_{\gamma}+\lambda Bu_{\gamma}-\left(u_{\bar \gamma}+\lambda
          Bu_{\bar \gamma}\right)\right\|_{\L{1}\left(\mathbb{R},\mathbb{R}\right)}
     \ge 
      \left\|u_{\gamma}-u_{\bar\gamma}\right\|_{\L{1}\left(\mathbb{R},\mathbb{R}\right)}\text{
        for any }\gamma,\bar \gamma\in]0,1],      
    \end{displaymath}
    which is compatible with $A$ being accretive.
  \end{example}

  \begin{remark}
    It is well known that the solution $u(t,x)$ to the Cauchy problem for the
    evolution equation~\eqref{eq:classical_ex} develops discontinuities in finite time, 
even    with smooth integrable
    initial data. 
    If a
    discontinuity travels with a speed different from zero, then
    $\left[u\left(1-u\right)\right]_{x}=-u_{t}$ must contains a Dirac mass,
    hence the solution at time $t$ is not contained in the domain
    $\mathcal{D}\left(A\right)$, see~\eqref{eq:A_values},
    of the generator of the evolution semigroup, but
    only in its closure
    $\overline{\mathcal{D}\left(A\right)}=D$. Therefore, this
    represents a very natural example of a non--linear semigroup for which
    the domain of its generator is not invariant.
  \end{remark}

We note that, in order to apply the
    generation theorem, the domain of $B$ must be
    ``reduced'', and  different ``reductions'' may lead to different generated
    semigroups. 
    The reduction given by~\eqref{eq:A_domain} leads to
    the semigroup of viscous approximations in Theorem~\ref{th:BP_limit}. 
    This reduction can also lead to Kru\u zkov entropy inequalities, see~\cite{C72}
    for the multidimensional case with smooth fluxes, 
    or~\cite{andreianov1} for~\eqref{eq:9}. 
    Kru\u zkov 
    entropy inequalities can also be used to define different reductions
    which gives correspondingly different semigroups in~\cite{andreianov1},
    referred to as ``germs".

    What happens  if the dependence of the flux $f$ on the spatial
    variable $x$ 
    is more irregular? 
 In~\cite{BGS}, using Theorem~\ref{th:BP_limit}
    as a building block, 
    existence and uniqueness of the vanishing viscosity limit for fluxes
    $f\left(t,x,\omega\right)$ with general
    $\bv$ regularity with respect to the variables $(t,x)$
    is obtained. The result in~\cite{BGS} is based on comparison estimates for
    solutions to the corresponding Hamilton--Jacobi equations.

The $\bv$ regularity on the flux is an essential assumption, as shown in the following
counter example. 
    Suppose that the map $x\mapsto f(x,\omega)$ is
    $\L\infty$ 
    but with unbounded variation.
    In this case the domain of the operator
    $A u = f(x,u)_x$ may not be dense in $\L1$.
    For every $\varepsilon>0$, the viscous approximations
    \begin{equation}
      \label{41}
      u^\varepsilon_t + f(x,
      u^\varepsilon)_x~=~\varepsilon\,u^\varepsilon_{xx}\,,\qquad 
      u^\varepsilon (0,x)=\bar  u(x),
    \end{equation}
     are still well
    defined for any initial data $\bar
    u\in\L1\left(\mathbb{R},\mathbb{R}\right)$, according to
    Theorem~\ref{th:gen_linfty_f}.
    However, they may not converge to a (weakly) continuous function
    of time $t\mapsto u(t)$.  In the next example, 
    we show that one could have
    \begin{equation}
      \label{42}
    \lim_{t\to 0+} \left( \lim_{\varepsilon\to 0}
      u^\varepsilon(t,\cdot)\right)~\not=~\bar u\,.
  \end{equation}

  \begin{example}
    \label{ex:second}
    Let $\mathbb{Q}=\left\{q_{i}\right\}_{i=1}^{+\infty}$ be an
    enumeration of the rational numbers, $\kappa >0$, and $V$ be the
    open set defined by
    \[V=\displaystyle{\bigcup_{i=1}^{+\infty}\left]q_{i}-\kappa 2^{-\left(i+1\right)},
      q_{i}+\kappa 2^{-\left(i+1\right)}\right[},\]
       so that $\m\left(V\right)\le  \kappa.$
   Define the closed set $K=\mathbb{R}\setminus V$ and the
  function $\alpha=\chi_{K}$ i.e. the characteristic
  function of the set $K$. Observe that $K$ is totally disconnected,
  that any rational number
  $q\in\mathbb{Q}$ has a neighborhood in which $\alpha$ is identically
  zero and that $\alpha$ has unbounded total variation on any
  interval with length greater than $\kappa$.

  \begin{theorem}
    Any weak solution $u\in\L\infty\left(\left[0,T\right]\times
      \mathbb{R},\mathbb{R}\right)$ to the conservation law
    \begin{equation}
      \label{eq:lastCL}
      u_{t}+f(x,u)_{x}=0,\qquad
      f(x,u)~=~\alpha(x)\, u(1-u)
    \end{equation}
    such that the map $t\mapsto
    u(t,\cdot)$ is continuous from $\left[0,T\right]$ into
    $\L\infty\left(\mathbb{R},\mathbb{R}\right)$ endowed with the
    weak$^{*}$ topology is constant in time, $u(t,x)=\bar u(x)$, and
    must satisfy $\bar u(x)\in\left\{0,1\right\}$ almost everywhere in $K$.
  \end{theorem}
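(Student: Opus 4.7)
The plan exploits the fact that $\alpha$ vanishes identically on the dense open set $V$: the evolution is locally trivial there, which should force $u$ to be stationary \emph{globally}, after which the resulting stationary equation will pin down $\bar u$ on $K$.

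\textbf{Step 1: time independence.} For any $a<b$ in $\mathbb R$ I would pick indices $i,j$ so that the neighbourhoods $I_i=\left]q_i-\kappa 2^{-(i+1)},q_i+\kappa 2^{-(i+1)}\right[$ and $I_j$ lie to the left of $a$ and to the right of $b$ respectively (possible by density of the rationals), and then choose a smooth cutoff $\chi\in\Cc\infty\left(\mathbb R,\left[0,1\right]\right)$ equal to $1$ on $\left[a,b\right]$ with $\supp\chi'\subset I_i\cup I_j$. Since $\alpha\equiv 0$ on $\supp\chi'$, the flux integral $\int f(x,u(t,x))\,\chi'(x)\,dx$ vanishes pointwise in $t$, and testing the weak formulation against $\psi(t)\chi(x)$ for $\psi\in\Cc\infty\left(\left]0,T\right[,\mathbb R\right)$ yields
\[
\int_0^T\psi'(t)\left(\int_{\mathbb R}u(t,x)\chi(x)\,dx\right)dt=0.
\]
The weak-$*$ continuity hypothesis ensures $t\mapsto\int u(t,x)\chi(x)\,dx$ is continuous, so this identity upgrades from a distributional statement to $\int u(t,x)\chi(x)\,dx$ being constant on $\left[0,T\right]$. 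Shrinking the supports of $\chi'$ inside $I_i\cup I_j$ and using the uniform $\L\infty$ bound on $u$ with dominated convergence, I would deduce that $\int_a^b u(t,x)\,dx$ is independent of $t$ for every $a<b$. The Lebesgue differentiation theorem then gives $u(t,\cdot)=\bar u(\cdot)$ a.e.\ for each $t\in\left[0,T\right]$, for some $\bar u\in\L\infty\left(\mathbb R,\mathbb R\right)$.

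\textbf{Step 2: values of $\bar u$ on $K$.} With $u(t,x)=\bar u(x)$ independent of $t$, the weak formulation reduces to the distributional identity $\left[f(x,\bar u(x))\right]_x=0$ on $\mathbb R$, so $\alpha(x)\bar u(x)\left(1-\bar u(x)\right)$ coincides a.e.\ with some constant $c$. Because $V$ has positive measure and $\alpha\equiv 0$ there, necessarily $c=0$. Since $\alpha=1$ on $K$, this forces $\bar u(x)\left(1-\bar u(x)\right)=0$ for a.e.\ $x\in K$, i.e.\ $\bar u(x)\in\left\{0,1\right\}$ a.e.\ in $K$, as claimed.

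The only mildly delicate point is to ensure that the flux contribution really drops out when testing against $\chi$: this works precisely because $\supp\chi'$ is contained in a set where $\alpha$ vanishes identically, so $f(x,u(t,x))\chi'(x)\equiv 0$ pointwise and no trace theory for $u$ is ever needed. Everything else is a routine limiting argument, driven by the density of the zero set of $\alpha$.
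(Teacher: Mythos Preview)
Your proof is correct and follows essentially the same approach as the paper: both arguments exploit that the spatial derivative of the test function can be localized inside the dense open set $V$ (where $\alpha$ vanishes), killing the flux term and reducing to conservation of mass over intervals, after which weak$^{*}$ continuity in time forces $u$ to be stationary. The paper carries this out with explicit mollifiers centered at rational endpoints, while you use abstract cutoffs with $\supp\chi'\subset V$; the only minor imprecision is the phrase ``shrinking the supports of $\chi'$ inside $I_i\cup I_j$'' --- what you actually need is to choose \emph{new} intervals $I_i,I_j\subset V$ with endpoints approaching $a$ and $b$, which is possible by density of $V$.
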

  \begin{proof}
    For $\varepsilon>0$, define the convolution kernels $\lambda_{\varepsilon}$ as
    \begin{equation}
      \label{eq:convKer}
        \lambda_{\varepsilon}(x)=\frac{1}{\varepsilon}
        \lambda\left(\frac{x}{\varepsilon}\right), \quad
        \lambda\in\C\infty\left(\mathbb{R},[0,1]\right),\quad
      \int_{\mathbb{R}}\lambda(x)\; dx =1,\quad 
      \lambda(x)=0\;
        \forall x\not\in \left[-1,1\right].
    \end{equation}
    We let
    \[
    a_{\varepsilon}(x)=\int_{-\infty}^{x}\lambda_{\varepsilon}
        \left(\xi\right)\; d\xi.
    \]
    Fix two rational numbers $r<q$, a time $\tau>0$, small positive
    $\varepsilon,\gamma>0$ and evaluate~\eqref{eq:lastCL} using the
    test function
    \begin{displaymath}
      \varphi\left(t,x\right)=\left(a_{\varepsilon}\left(x-r\right)
        -a_{\varepsilon}\left(x-q\right)\right)\left(a_{\gamma}\left(t-\gamma\right)
        -a_{\gamma}\left(t-\tau-\gamma\right)\right),
    \end{displaymath}
    we get
    \begin{equation}
      \begin{split}
        &\int_{0}^{\tau+2\gamma}\int_{r-\varepsilon}^{q+\varepsilon}
        u(t,x)\left(a_{\varepsilon}\left(x-r\right)
          -a_{\varepsilon}\left(x-q\right)\right)\left(\lambda_{\gamma}\left(t-\gamma\right)
          -\lambda_{\gamma}\left(t-\tau-\gamma\right)\right)\;dtdx\\
        +&\int_{0}^{\tau+2\gamma}\int_{r-\varepsilon}^{q+\varepsilon}
        f\left(x,u(t,x)\right)\left(\lambda_{\varepsilon}\left(x-r\right)
          -\lambda_{\varepsilon}
          \left(x-q\right)\right)\left(a_{\gamma}\left(t-\gamma\right)
          -a_{\gamma}\left(t-\tau-\gamma\right)\right)\;dtdx=0.
      \end{split}
    \end{equation}
    If $\varepsilon$ is sufficiently small the supports of
    $\lambda_{\varepsilon}\left(x-r\right)$ and of
    $\lambda_{\varepsilon} \left(x-q\right)$ are contained in $V$
    where $f(x,u(t,x))$ vanishes. Therefore, the previous equality
    becomes
    \begin{displaymath}
      \int_{0}^{\tau+2\gamma}\int_{r-\varepsilon}^{q+\varepsilon}
      u(t,x)\left(a_{\varepsilon}\left(x-r\right)
        -a_{\varepsilon}\left(x-q\right)\right)\left(\lambda_{\gamma}\left(t-\gamma\right)
        -\lambda_{\gamma}\left(t-\tau-\gamma\right)\right)\;dtdx=0.
    \end{displaymath}
    Letting $\varepsilon$ tend to zero we obtain
    \begin{displaymath}
      \int_{0}^{2\gamma}\lambda_{\gamma}\left(t-\gamma\right)
      \int_{r}^{q}
      u(t,x)\;dxdt=
      \int_{\tau}^{\tau+2\gamma}\lambda_{\gamma}\left(t-\tau-\gamma\right)
      \int_{r}^{q}
      u(t,x)\;dxdt.
    \end{displaymath}
    The weak$^{*}$ continuity assumption implies that the map
    $t\mapsto \int_{r}^{q} u(t,x)\;dx$ is continuous, therefore we can
    take the limit as $\gamma\to 0$ and get
    \begin{displaymath}
      \int_{r}^{q}
      u(\tau,x)\;dx
      =
      \int_{r}^{q}
      u(0,x)\;dx,\quad\text{ for any }r<q,\;\text{ with
      }r,q\in\mathbb{Q} \text{ and }\tau\in\left[0,T\right].          
    \end{displaymath}
    This implies that $u(t,x)=u(0,x)\,\dot=\,\bar u(x)$ must be constant
    in time as a function from $[0,T]$ into
    $\L\infty\left(\mathbb{R}, \mathbb{R}\right)$.
    From~\eqref{eq:lastCL}, we have
    \begin{displaymath}
      \left[\alpha(x)\bar u(x)\left(1-\bar u(x)\right)\right]_{x}=0,
      \quad
      \Longrightarrow\quad \alpha(x)\bar u(x)\left(1-\bar u(x)\right)=C
    \end{displaymath}
    for some constant $C\in\mathbb{R}$. But $\alpha$ vanishes on the
    set $V$, therefore $C=0$. Finally, since $\alpha(x)=1$ for any
    $x\in K$, then $\bar u(x)\in\left\{0,1\right\}$ a.e. $x\in K$.    
\end{proof}

 \begin{remark}
    As a consequence of this theorem, only initial data $\bar u$ that
    satisfy $\bar u(x)\in \left\{0,1\right\}$ almost everywhere on $K$
    have a solution to the Cauchy problem
    \begin{displaymath}
      \begin{cases}
        u_{t}+f(x,u)_{x}=0\\
        u(0,x)=\bar u(x)
      \end{cases}
    \end{displaymath}
    which depends continuously on time.
    Hence,
    if $b-a>\kappa$, and $\bar u =
    \frac{1}{2}\chi_{\left[a,b\right]}$, 
    the previous Cauchy problem
    cannot have a weak solution $u\in\L\infty\left(\left[0,T\right]\times
      \mathbb{R},\mathbb{R}\right)$ such that the map $t\mapsto
    u(t,\cdot)$ is continuous from $\left[0,T\right]$ into
    $\L\infty\left(\mathbb{R},\mathbb{R}\right)$ endowed with the
    weak$^{*}$ topology.
 \end{remark}

  We remark that the initial condition $\bar
  u=\frac{1}{2}\chi_{\left[a,b\right]}$ 
  does not
  lie in the closure of the domain of the operator
  $Au\doteq f(x,u)_x$.  This can be checked by showing that, if
  $\|u-\bar u\|_{\L1\left(\mathbb{R},\mathbb{R}\right)}<\rho$
  with $0<\rho<\frac{b-a-\kappa}{4}$, then the function
  $x\mapsto f(x, u(x)) $ has unbounded variation.
  Indeed, if $\|u-\bar
  u\|_{\L1\left(\mathbb{R},\mathbb{R}\right)}<\rho$, then,
  setting $B=\left\{x\in\left[a,b\right]: 
        \frac{1}{4}\le u(x)\le \frac{3}{4}\right\}$
\begin{displaymath}
    \m\left(B\right)=b-a-
    \m\left(\left\{x\in\left[a,b\right]: \left|u(x)-\frac{1}{2}\right|>
        \frac{1}{4}\right\}\right)
    \ge b-a -4\|u-\bar u\|_{\L1_{\L1\left(\mathbb{R},\mathbb{R}\right)}}>\kappa.
  \end{displaymath}
  So that $\m\left(K\cap B\right)>0$.
We now have
\begin{displaymath}
  f(x, u(x))
  \begin{cases}
    \ge \frac{3}{16}&\text{ if }x\in B\cap K\\
    =0 &\text{ if }x\in V.
  \end{cases}
\end{displaymath}
Since for any two points $x_1, x_2\in B\cap K$
we can find an interval contained in $V$ between them,  the total
variation of $f(x,u)$ is infinite and $u\notin \D(A)$.  It is
thus clear that the classical theory of contractive semigroups
\cite{CL} cannot be applied here.  
\end{example}

%
%
%
%
%
%
%


\bigskip

\noindent\textbf{Acknowledgment:} The present work was supported by
the PRIN~2015 project \emph{Hyperbolic Systems of Conservation Laws
  and Fluid Dynamics: Analysis and Applications} and by GNAMPA 2017
project \emph{Conservation Laws: from Theory to Technology}.
The authors  would like to thank the first anonymous referee for suggesting
a discussion on the comparison between vanishing viscosity 
and the adapted entropies,  
which led to the creation of Section 6, 
and the second anonymous referee for carefully reading the manuscript
and providing many useful suggestions.

\end{document}